\documentclass[onefignum,onetabnum]{siamonline250211}

\usepackage{tikz-cd}
\usepackage{lipsum}
\usepackage{amsfonts}
\usepackage{graphicx}
\usepackage{epstopdf}
\usepackage{algorithmic}
\ifpdf
  \DeclareGraphicsExtensions{.eps,.pdf,.png,.jpg}
\else
  \DeclareGraphicsExtensions{.eps}
\fi
\usetikzlibrary{arrows.meta}

\PassOptionsToPackage{hyphens}{url}
\usepackage{xypic}
\usepackage{tikz}
\usetikzlibrary{math}
\usepackage{amsmath, amssymb}
\usepackage{subcaption}

\DeclareMathOperator*{\argmin}{arg\,min}
\usetikzlibrary{arrows.meta,positioning,calc}

\newcommand{\rmH}{\mathrm{H}}
\newcommand{\rmC}{\mathrm{C}}
\newcommand{\rmZ}{\mathrm{Z}}
\newcommand{\rmB}{\mathrm{B}}

\newcommand{\proj}{\mathrm{proj}}
\newcommand{\Sch}{\mathrm{Sch}}
\newcommand{\birthcochain}{\eta}

\usepackage{enumitem}
\setlist[enumerate]{leftmargin=.5in}
\setlist[itemize]{leftmargin=.5in}

\newsiamremark{remark}{Remark}
\newsiamremark{hypothesis}{Hypothesis}
\newsiamremark{notation}{Notation}

\crefname{hypothesis}{Hypothesis}{Hypotheses}
\newsiamthm{claim}{Claim}
\newsiamremark{fact}{Fact}
\newsiamremark{observation}{Observation}

\crefname{fact}{Fact}{Facts}

\headers{Birth and death cochains}{T. Weighill and L. Zhou}

\title{Topological optimization with birth and death cochains\thanks{
\funding{L.Z. received support from an AMS--Simons Travel Grant.}}}

\author{
Thomas Weighill\thanks{Department of Mathematics and Statistics, University of North Carolina at Greensboro, Greensboro, NC 
  (\email{t\_weighill@uncg.edu}).}
  \and  Ling Zhou\thanks{Department of Mathematics, Duke University, Durham, NC 
  (\email{ling.zhou@duke.edu}).}
  }

\usepackage{amsopn}

\begin{document}

\maketitle
\begin{abstract}
We introduce the notion of birth and death cochains as generalized versions of birth and death simplices in persistent cohomology. We show that birth and death cochains (unlike birth and death simplices) are always unique for a given persistent cohomology class. We use birth and death cochains to define birth and death content as generalizations of birth and death times. We then demonstrate the advantages of using that birth and death content as loss functions on a variety of topological optimization tasks with point clouds, time series and scalar fields. We close with a novel application of topological optimization to a dataset of arctic ice images.
\end{abstract}

\begin{keywords}
persistence diagram, persistent cohomology, topological optimization, gradient descent
\end{keywords}

\begin{MSCcodes}
55N31, 68T09
\end{MSCcodes}

\section{Introduction}

Persistent homology and cohomology quantify the shape of data by tracking the topology of a datset at various thresholds (or scales) \cite{edelsbrunner2000topological,
zomorodian2005computing,
edelsbrunner2008persistent,
carlsson2009topology,
de_Silva_2009}. 
A topological feature such as  a connected component or hole is ``born'' at one threshold and ``dies'' at later threshold. The time between these two events is the persistence of the feature, and this value often quantifies its importance in the overall structure of the data. Birth and death times are recorded as pairs $[b, d)$ to create a persistence diagram, which is known to be stable with respect to perturbations in the underlying data~\cite{cohen2005stability}. 

The birth--death pair $[b, d)$ records only partial information about a topological feature: it specifies the parameter values at which the feature appears and disappears, but not the specific changes in the underlying filtered spaces responsible for these events. 
In practice, these events are realized by particular simplices in the filtration. We refer to the simplex whose addition creates (resp.\ destroys) a feature as its \emph{birth simplex} (resp.\ \emph{death simplex}); see \Cref{fig:introexample} for a toy example. 
These birth and death simplices, despite being absent from a persistence diagram, are important in many applications. For example, they can be used to localize or visualize topological features in the underlying data~\cite{duchin2022homological,hickok2024persistent}, and they play an important role in several optimization frameworks involving persistent (co)homology~\cite{carriere2021optimizing,bubenik2023topological}.  Despite their utility, birth and death simplices have some notable drawbacks:
\begin{itemize}
    \item [(a)] they are highly localized: modifying a birth or death simplex amounts to changing only a small portion of the data,
    \item [(b)] they can fail to be well-defined (if two simplices share a filtration value), and
    \item[(c)] they can be highly unstable: birth and death simplices can be altered by small perturbations in the data (unlike birth and death times).
\end{itemize}
For example, in the point cloud example in \Cref{fig:introexample}, trying to decrease the birth time of the hole by focusing on the birth edge alone amounts to changing just two points. Moreover, a tiny disturbance of the point could change the location of the green edge while still preserving the same ``hole''. 

\begin{figure}
    \centering
    \includegraphics[width=0.8\linewidth]{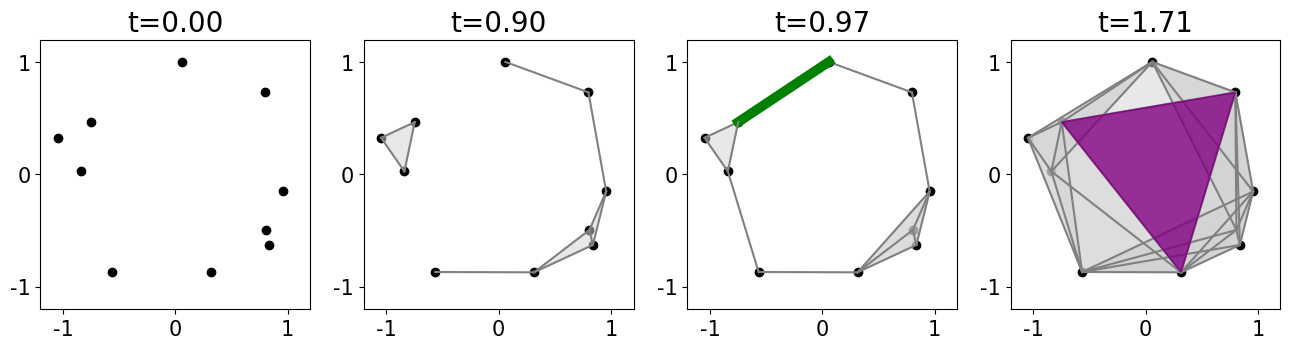}
    \caption{The classical notion of birth simplex (in green) and death simplex (purple) for degree-$1$ persistent homology.}
    \label{fig:introexample}
\end{figure}

In this paper, we attempt to address these shortcomings by relaxing the notion of birth and death simplices. 
We introduce \emph{birth and death cochains}, which are linear combinations of (dual) simplices relevant to the birth and death of a feature respectively. In the most general definition, birth and death cochains are defined for a simplicial pair $K\subseteq L$ as the solution to $\ell^2$ optimization problems which generalize those related to harmonic cochains. Among our theoretical results, we prove the uniqueness of these objects, and relate them to relative cohomology, Laplace learning, and persistent Laplacians (\Cref{sec:theory}). 

When dealing with persistent cohomology, $K$ and $L$ become snapshots of the filtration just before and after birth or death (see \Cref{fig:epsilon-timeline}), giving rise to \emph{$\varepsilon$-birth and $\varepsilon$-death cochains}. Drawbacks (a) and (b) listed above are avoided by $\varepsilon$-birth and $\varepsilon$-death cochains since they involve multiple simplices at once, and are always unique (see \Cref{rem:uniqueness}) and well-defined for a given persistent cohomology class. While we have good reason not to expect a theoretical guarantee of stability for $\varepsilon$-birth and $\varepsilon$-death cochains, we have a heuristic argument for why they may be more stable than birth and death simplices in practice (see \Cref{sec:instability}). Our hypothesis is supported by better convergence to regular configurations for point-clouds (\Cref{sec:pointclouds}), and more robust behavior on lower-star filtrations (\Cref{sec:lowerstar}).

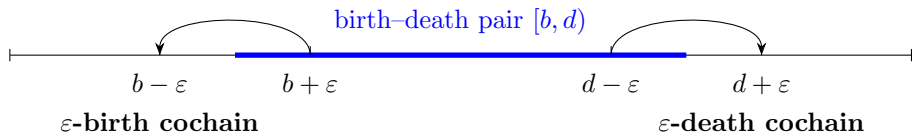
\begin{figure}[h]
\centering
\begin{tikzpicture}[font=\small, >=Stealth]

\draw[|-|] (0,0) -- (12,0);

\coordinate (bme) at (2,0);
\coordinate (bpe) at (4,0);
\coordinate (dme) at (8,0);
\coordinate (dpe) at (10,0);

\coordinate (b) at (3,0);
\coordinate (d) at (9,0);

\foreach \pt/\lbl in {bme/$b-\varepsilon$, bpe/$b+\varepsilon$, dme/$d-\varepsilon$, dpe/$d+\varepsilon$}{
  \draw (\pt) -- ++(0,0.12);
  \node[below=3pt] at (\pt) {\lbl};
}

\draw[line width=2pt, blue] ($(b)$) -- ($(d)$);
\node[above=4pt, blue] at ($(b)!0.5!(d)$) {birth--death pair $[b, d)$};

\draw[<-] (bme.north) .. controls +(0,0.6) and +(0,0.6) .. (bpe.north);
\draw[->] (dme.north) .. controls +(0,0.6) and +(0,0.6) .. (dpe.north);

\node[below=18pt] at ($(bme)!0.5!(bme)$) {\textbf{$\varepsilon$-birth cochain}};
\node[below=18pt] at ($(dpe)!0.5!(dpe)$) {\textbf{$\varepsilon$-death cochain}};

\end{tikzpicture}
\caption{The $\varepsilon$-birth cochain which lives at $b-\varepsilon$ is computed for a cochain at ${b+\varepsilon}$, and the $\varepsilon$-death cochain which lives at $d+\varepsilon$ is computed for a cochain at $d-\varepsilon$. See \Cref{sec:birth-death cochain}.}
\label{fig:epsilon-timeline}
\end{figure}

We use $\varepsilon$-birth and $\varepsilon$-death cochains to introduce a relaxation of the notion of birth time and death time, which we call \emph{$\varepsilon$-birth content} and \emph{$\varepsilon$-death content} respectively. These quantities approximate classical birth and death times for small $\varepsilon$, but depend on multiple simplices, making them perform better as objective functions in topological optimization tasks. \Cref{fig:smallcochains} shows a visualization of topological optimization on point clouds using birth and death cochains; note how multiple simplices are involved at each gradient ascent step. We prove that regular polygons are critical points of Vietoris-Rips $\varepsilon$-persistence content (i.e.~$\varepsilon$-death content minus $\varepsilon$-birth content) under suitable constraints (\Cref{thm:criticalmoment}). By contrast, birth and death simplices are not even well-defined for regular polygons, and are very unstable for almost-regular configurations.

\begin{figure}
    \centering
    \begin{tikzpicture}
        \node at (0,0) {\includegraphics[width=15cm]{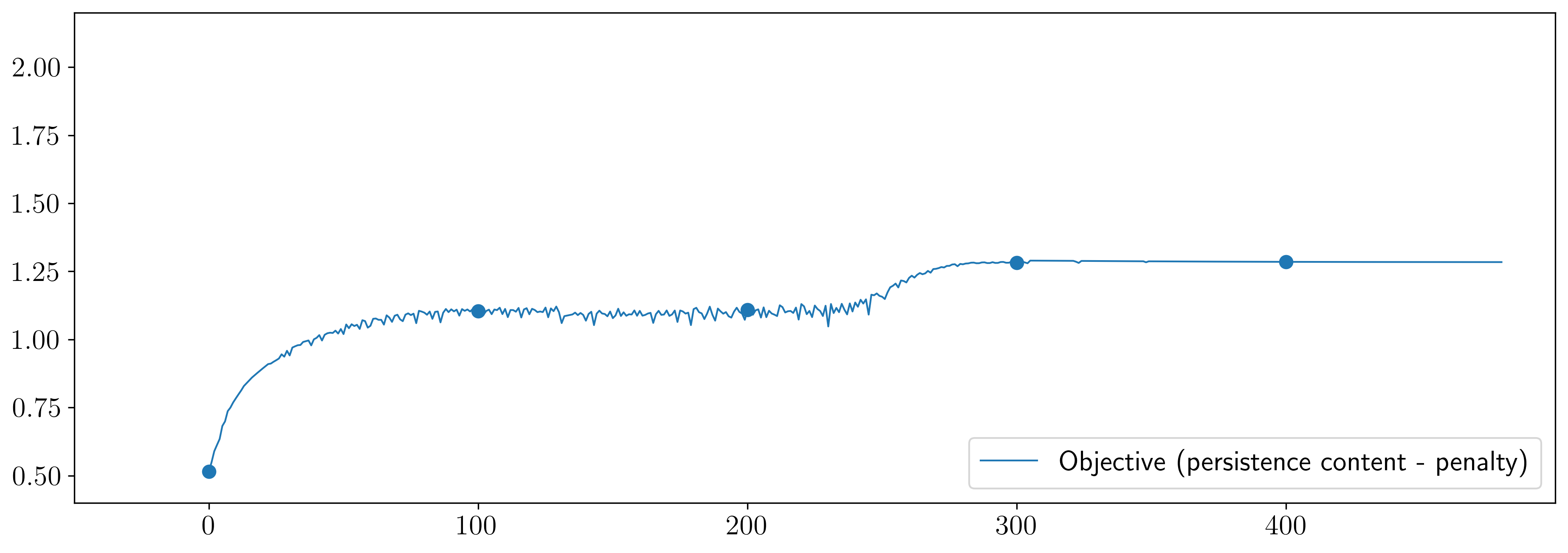}};

        \foreach \i in {0,100,200,300,400}{
        \node at (\i/38.3-5.64, 1.5) {\includegraphics[width=2cm]{pointmover/cochains_step\i.png}};
        }
    \end{tikzpicture}

    \caption{Optimizing a point cloud using birth and death cochains (see \Cref{sec:pointclouds}). We plot the objective function to be maximized, along with snapshots of the points cloud every 100 iterations. The birth cochain is shown using green edges and the death cochain is shown using purple triangles; these are the simplices that are adjusted in the gradient ascent step.}
    \label{fig:smallcochains}
\end{figure}

While birth and death cochains can be useful in any application involving birth and death simplices, our focus in this paper is on topological optimization, where a topological loss function is minimized either as a standalone objective or as part of a regularizer. To demonstrate the advantages of birth and death cochains, we deploy them in three model optimization tasks:
\begin{itemize}
    \item On point clouds, we show that using cochains instead of simplices often gives better long-term convergence and avoids local optima, leading to improved normalized persistence  (\Cref{sec:pointclouds}). 
    \item On image data, optimization using cochains avoids arbitrary choices and results in smoother, more realistic solutions (\Cref{sec:lowerstar}).
    \item For feature selection, we find that even a naive line-search method effectively finds features relevant to a topological feature when using cochains (\Cref{sec:features}). 
\end{itemize}

We conclude our demonstration of cochains in topological optimization with a novel application to Arctic ice extent data (\Cref{sec:arctic}). We deploy the naive line-search method for feature selection on satellite images of ice, revealing the specific assymetry in the melting and freezing cycle that causes the ``hole'' in the data observed by persistent homology, confirming the hypothesis in~\cite{ilagor2023visualizing}.

\section{Preliminaries}\label{sec:preliminaries}

Throughout this paper, all simplicial complexes are finite. In this section we fix notation and recall the basic notions from simplicial cohomology and persistent cohomology that will be used throughout.

\subsection{Cohomology and Harmonic Representatives}
All (co)homology groups are taken with coefficients in $\mathbb{R}$. 
For a simplicial complex $X$ and a degree $k$, the space of $k$-cochains $\rmC^k(X)$ consists of all real-valued functions on the set $X^k$ of $k$-simplices of $X$. 
We equip $\rmC^k(X)$ with the standard $\ell_2$ inner product defined by
\[
\langle \alpha, \beta \rangle
:= \sum_{\sigma \in X^k} \alpha(\sigma)\,\beta(\sigma),
\qquad \alpha, \beta \in \rmC^k(X),
\]
and denote the induced norm by $\|\alpha\|_2$.

We denote by $\delta^k : \rmC^k(X) \to \rmC^{k+1}(X)$ the $k$-th coboundary operator and by $(\delta^k)^*$ its adjoint with respect to the $\ell_2$ inner product. More generally, we use $(\cdot)^*$ to denote the adjoint of a linear operator between finite-dimensional inner product spaces.
A $k$-cochain $\alpha$ is called a \emph{cocycle} if $\delta^k \alpha = 0$, and a \emph{coboundary} if $\alpha \in \operatorname{im} \delta^{k-1}$. We denote the spaces of $k$-cocycles and $k$-coboundaries by $\rmZ^k(X) := \ker \delta^k$ and $\rmB^k(X) := \operatorname{im} \delta^{k-1}$, respectively.
The $k$-th \emph{cohomology group} is defined as
\(
\rmH^k(X) := \rmZ^k(X) \big/ \rmB^k(X).
\)
For a cocycle $\alpha$, we write $[\alpha]$ to denote its cohomology class.

Given a cohomology class $[\alpha] \in \rmH^k(X)$, its \emph{harmonic representative} is the unique cocycle $\hat{\alpha}$ in the class $[\alpha]$ that minimizes the $\ell_2$ norm. In other words, it is the unique element in
\[
\argmin \left\{ \|\alpha'\|_2 \; \middle|\; \alpha' \in \rmC^k(X),\; \alpha' \in [\alpha] \right\}.
\]
Equivalently, it is the unique cochain $\hat{\alpha} \in [\alpha]$ satisfying
\(
\Delta^k \hat{\alpha} = 0,
\)
where
\(
\Delta^k := (\delta^k)^*\delta^k + \delta^{k-1}(\delta^{k-1})^*
\)
denotes the $k$-th combinatorial Hodge Laplacian.

\subsection{Persistent Cohomology}
\label{subsec:persistent cohomology}
A \emph{filtration} of topological spaces is a collection $\{X_t\}_{t \in \mathbb{R}}$ such that $X_t \subseteq X_s$ whenever $t \leq s$. 
In this paper, we focus on filtrations of finite simplicial complexes, which can be represented as a finite nested sequence of subcomplexes:
\[
X_{t_0} \subseteq X_{t_1} 
\subseteq \cdots \subseteq X_{t_n} = X.
\]
That is, there exist $t_0 < t_1 < \cdots < t_n$ such that $X_t = X_{t_i}$ for all $t \in [t_i, t_{i+1})$.

We primarily consider filtrations of the following types:
\begin{itemize}
    \item The \emph{sublevel-set filtrations}, defined as 
    $\{X_t\}_{t\in\mathbb{R}}$ with 
    $X_t=\left\{\sigma\in X \;\middle|\; f(\sigma)\le t\right\}$ 
    for a simplicial complex $X$ and a real-valued function 
    $f$ on the simplices of $X$.
    \item The \emph{lower star filtration}, $\{\mathrm{LowSt}(K, t)\}_{t\in \mathbb{R}}$, for a simplicial complex $K$ with vertex set $V$ and $f: V \to \mathbb{R}$, where $\mathrm{LowSt}(K, t)$ consists of simplices whose vertices have function value at most $t$.
    \item The \emph{Vietoris–Rips filtrations}, defined as $\{\mathrm{VR}(M;t)\}_{t\in\mathbb{R}}$ for a metric space $(M,d)$, where $\mathrm{VR}(M;t)$ consists of all finite subsets $\sigma\subseteq M$ with $\max_{x,y\in\sigma} d(x,y)\le t$. Equivalently, this is the sublevel-set filtration of the diameter function $f(\sigma)=\max_{x,y\in\sigma} d(x,y)$ on the full simplex over $M$.
\end{itemize}

Throughout this paper, we work with cohomology rather than homology. While both theories provide the same information for finite simplicial complexes over a field (by the universal coefficient theorem), cohomology is more convenient for our purposes, especially when working with harmonic representatives and optimization problems.

Given a filtration $\{X_t\}_{t \in \mathbb{R}}$, the $k$-th cohomology groups $\rmH^k(X_t)$ assemble into a persistence module, with structure maps induced by the inclusions $X_t \hookrightarrow X_s$ for $t \leq s$:
\begin{equation*} \label{eq:module}
    \rmH^k(X_{t_0}) \leftarrow \rmH^k(X_{t_1}) \leftarrow \cdots \leftarrow \rmH^k(X_{t_n}).
\end{equation*} 
This persistence module is referred to as the \emph{$k$-th persistent cohomology} of the filtration.

By the structure theorem for persistence modules 
(see, e.g.,~\cite{crawley2015decomposition,oudot2015persistence}), 
the $k$-th persistent cohomology decomposes as a direct sum of interval modules (i.e., persistence modules supported on a single interval). 
This decomposition is encoded by a multiset of intervals,  
called the \emph{barcode} (or persistence diagram) in degree $k$. 
We adopt the convention that persistence intervals, also called \emph{bars}, are half-open of the form $[b,d)$, so that a class exists for $b \le t < d$ and disappears at $t=d$. 
For each bar $[b_i,d_i)$, we refer to $b_i$ and $d_i$ as its \emph{birth} and \emph{death} times.

\subsection{Relative Cohomology}
\label{sec:relative cohomology}
For clarity, when necessary, we add the underlying space as a subscript to the coboundary operator and cohomology class, e.g., $\delta^k_X$ and $[\cdot]_X$, to indicate that the coboundary or cohomology class is taken with respect to the complex $X$.

Consider a subcomplex $K$ of a complex $L$. The \emph{relative cochain group} $\rmC^k(L, K)$ consists of $k$-cochains on $L$ that vanish when restricted to $K$; that is,
\[
\rmC^k(L, K) := \left\{\, \varphi_L \in \rmC^k(L) \;\middle|\; \varphi_L|_K = 0 \, \right\},
\]
where $\varphi_L|_K \in \rmC^k(K)$ denotes the restriction of $\varphi_L$ to the $k$-simplices of $K$.
These groups assemble into the \emph{relative cochain complex}
\[
\cdots \to \rmC^{k-1}(L, K) \xrightarrow{\delta^{k-1}} \rmC^{k}(L, K) \xrightarrow{\delta^k} \rmC^{k+1}(L, K) \to \cdots,
\]
where the coboundary maps are induced from those on $L$. The relative cohomology groups $\rmH^k(L, K)$ are defined as the cohomology of this complex. The standard inner product on $\rmC^k(L)$ restricts to an inner product on $\rmC^k(L, K)$.

Let $\iota: K \hookrightarrow L$ be the inclusion of simplicial complexes. 
The \emph{restriction map} $\iota^\#: \rmC^k(L) \to \rmC^k(K)$ sends a cochain $\phi \in \rmC^k(L)$ to its restriction on the $k$-simplices of $K$. 
The \emph{extension by zero map} $\epsilon: \rmC^k(K) \hookrightarrow \rmC^k(L)$ embeds a cochain on $K$ into $L$ by assigning zero to all $k$-simplices in $L \setminus K$. 
By standard abuse of notation, we also write $\iota^\#: \rmH^k(L) \to \rmH^k(K)$ for the induced map on cohomology.
With respect to the standard inner product on cochains, the subspaces $\rmC^k(L, K)$ and $\epsilon(\rmC^k(K))$ are orthogonal complements in $\rmC^k(L)$.
Accordingly, we have the orthogonal decomposition
\begin{equation}\label{eq:orth-decomp}
\rmC^k(L)
=
\rmC^k(L,K)^\perp \oplus \rmC^k(L,K),
\qquad
\rmC^k(L,K)^\perp = \epsilon(\rmC^k(K)).
\end{equation}

\section{Birth and death cochains}\label{sec:theory}

\subsection{Birth and death cochains for simplicial pairs}
Let $K$ be a subcomplex of the simplicial complex $L$. We study the change in simplicial cohomology from $K$ to $L$. 
There are two primary scenarios, informally described as:
\begin{itemize}
    \item (Birth) A cohomology class which does not exist at $K$ appears at $L$, and
    \item (Death) A cohomology class which exists at $K$ ``dies'' (i.e.~is no longer present) in $L$.
\end{itemize}

More formally, we have the following definitions.

\begin{definition}\label{def:born-die}
    Let $K$ be a subcomplex of the simplicial complex $L$. We say a cohomology class $[\alpha]_L \in \rmH^k(L)$ is \emph{born between $K$ and $L$} if $[\alpha\vert_K] = 0 \in \rmH^k(K)$. We say that a cohomology class $[\beta]_K \in \rmH^k(K)$ \emph{dies between $K$ and $L$} if $\beta$ cannot be extended to a $k$-cocycle on $L$. 
\end{definition}

Suppose $[\alpha]_L \in \rmH^k(L)$ is born between $K$ and $L$. We begin by noting that there is a representative $\hat{\alpha} \in  [\alpha]_L$ whose restriction to $K$ is the zero cochain (not just zero in cohomology). Indeed, $\alpha\vert_{K}$ is a coboundary in $K$, so $\alpha\vert_{K} = \delta_{K}^k f$ for some $f \in \rmC^{k-1}(K)$. Extending $f$ by zero, we note that $\alpha - \delta_{L}^k f$ is cohomologous to $\alpha$ and restricts to zero on $K$ as required. 

\begin{definition}[Birth cochain]\label{def:birth-cochain}
    Let $K \subseteq L$ and let $[\alpha]_L \in \rmH^k(L)$ be born between $K$ and $L$. The \emph{birth cochain for $\alpha$ from $K$ to $L$} is the unique element $\hat{\alpha}$ of
    $$
        \argmin_{\alpha' \in \rmC^k(L)} \left\{ ||\alpha'||_2\;\middle|\;\alpha'\vert_K = 0,\ \alpha' \in [\alpha]_L \right\}.
    $$
\end{definition}

Uniqueness of $\hat{\alpha}$ is addressed in \Cref{rem:uniqueness} below. \Cref{fig:exampleb} illustrates a simple example of a birth cochain. 
In the special case of $K=\emptyset$, the condition $\hat{\alpha}|_K=0$ is vacuous, and the birth cochain reduces to the $\ell^2$-minimal (harmonic) representative of $[\alpha]_L$. Thus, birth cochains generalize harmonic representatives to the setting of simplicial pairs.
At the opposite extreme, if $K=L$, no nontrivial class can be born between $K$ and $L$.

We now consider death, which happens when there is an $[\beta]_K \in \rmH^k(K)$ which cannot be extended to a cocycle in $\rmH^k(L)$. 

\begin{definition}[Death cochain]\label{def:death cochain}
    Let $K \subseteq L$ and let $[\beta]_K \in \rmH^k(K)$ die between $K$ and $L$. The \emph{death cochain for $\beta$ from $K$ to $L$} is $\delta^k_{L} \hat{\beta}$, where $\hat{\beta}$ is any element in
    \begin{equation}\label{eq:deat-cochain-def}
        \argmin_{\beta' \in \rmC^{k}(L)} \left\{ \|\delta^k_{L} \beta'\|_2\;\middle|\; \beta'\vert_K = \beta \right\}.
    \end{equation}
    Any such $\hat{\beta}$ is called a \emph{death potential}.
\end{definition}

\Cref{fig:exampled} shows a simple example of a death cochain.
In \Cref{prop:death cochain}, we will interpret the death cochain as the $\ell_2$-minimal representative of the relative cohomology class $[\delta_L^k \epsilon(\beta)]_{(L,K)}$, where $\epsilon$ denotes the extension-by-zero map and the subscript $(L,K)$ indicates that the cohomology class is taken in $\rmH^{k+1}(L,K)$.
In the special case when $L$ and $K$ have the same $k$-skeleton (e.g. $L=K$), the death potential can only be $\beta$ itself. 

While the birth cochain clearly does not depend on the choice of the representative $\alpha$, the same is not immediately clear for the death cochain. While a direct proof is possible, the fact that the death cochain is independent of the choice of representative will actually follow from the formulation of birth and death cochains in terms of relative cohomology in the next section; see \Cref{cor:welldefined}. 
For now we note that with this in mind: we can reformulate the optimization problem in \Cref{def:death cochain} as 

$$
       \argmin_{\omega \in \rmC^{k+1}(L)} \left\{ ||\omega||_2\;\middle|\;\omega = \delta^k_{L} \beta',\, \beta'\vert_K \in [\beta]_K \right\}.
$$

\begin{remark}[Uniqueness of birth and death cochains]\label{rem:uniqueness}
The minimizers in \Cref{def:birth-cochain,def:death cochain} are unique.
In each case the feasible set is an affine subset of a finite-dimensional
cochain space defined by linear constraints, and the squared $\ell_2$ norm
is strictly convex. Hence the minimizer is uniquely determined.
\end{remark}

\begin{figure}[h]
    \centering
    \begin{subfigure}[t]{0.33\textwidth}
    \centering
        \includegraphics[height=2.5cm]{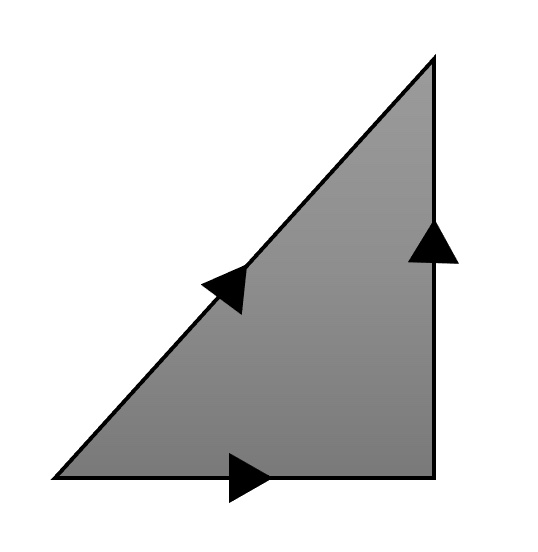}
        \caption{$K$}
    \end{subfigure}%
    \begin{subfigure}[t]{0.33\textwidth}
    \centering
        \includegraphics[height=2.5cm]{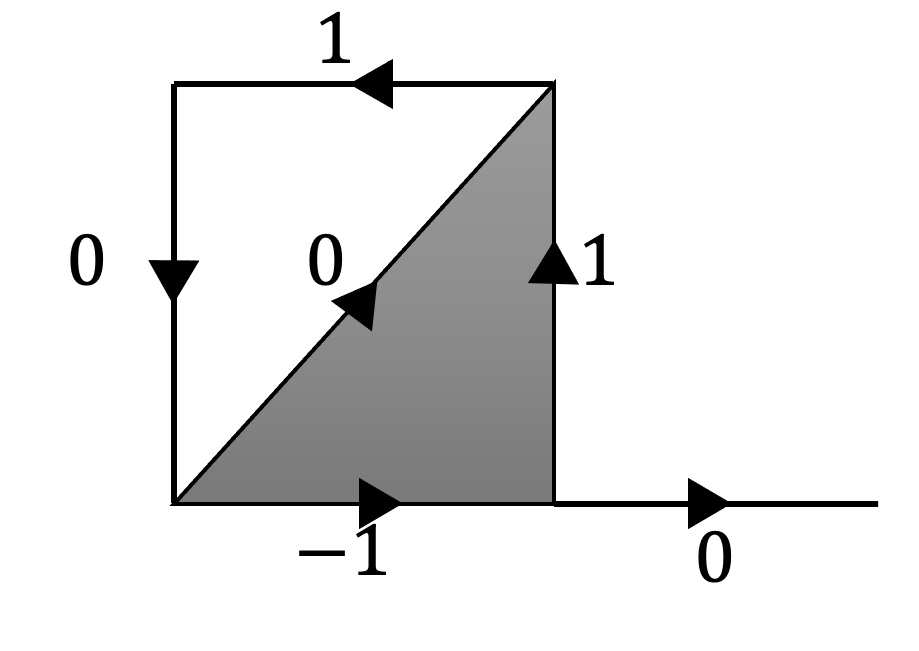}
        \caption{$L$ and $\alpha \in \rmC^1(L)$}
    \end{subfigure}%
    \begin{subfigure}[t]{0.33\textwidth}
        \centering
        \includegraphics[height=2.5cm]{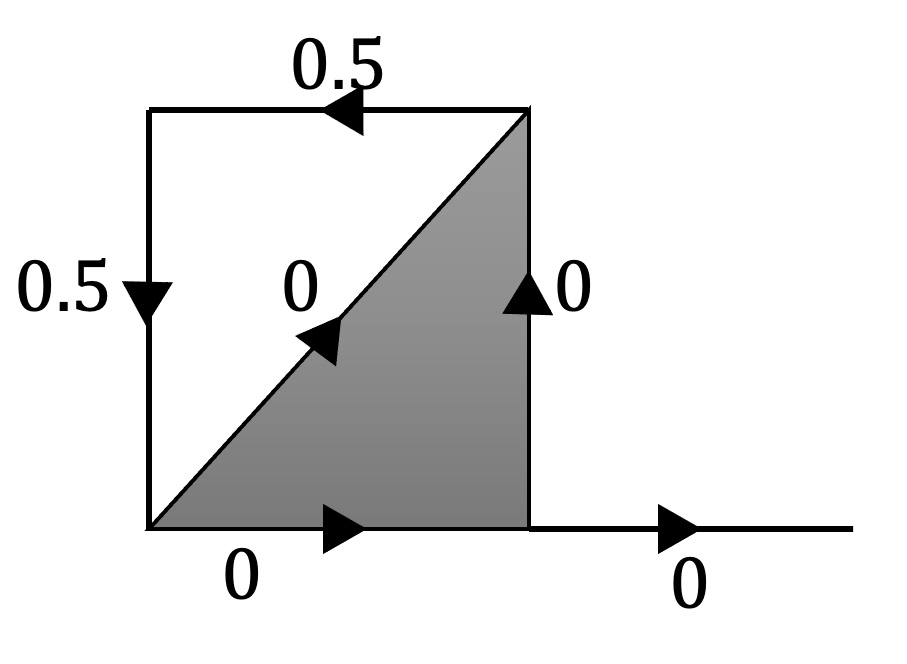}
        \caption{birth cochain}
    \end{subfigure}%
    \caption{The birth cochain for the cohomology class $\alpha$ born between $K$ and $L$, where the edge values show the coefficients in each $1$-chain.}
    \label{fig:exampleb}
\end{figure}

\begin{figure}[h]    
\centering
    \begin{subfigure}[t]{0.23\textwidth}
        \centering
        \includegraphics[height=2.5cm]{example/complex2.jpg}
        \caption{$K$ and $\beta \in \rmC^1 (K)$}
    \end{subfigure}%
    \begin{subfigure}[t]{0.26\textwidth}
        \centering
        \includegraphics[height=2.5cm]{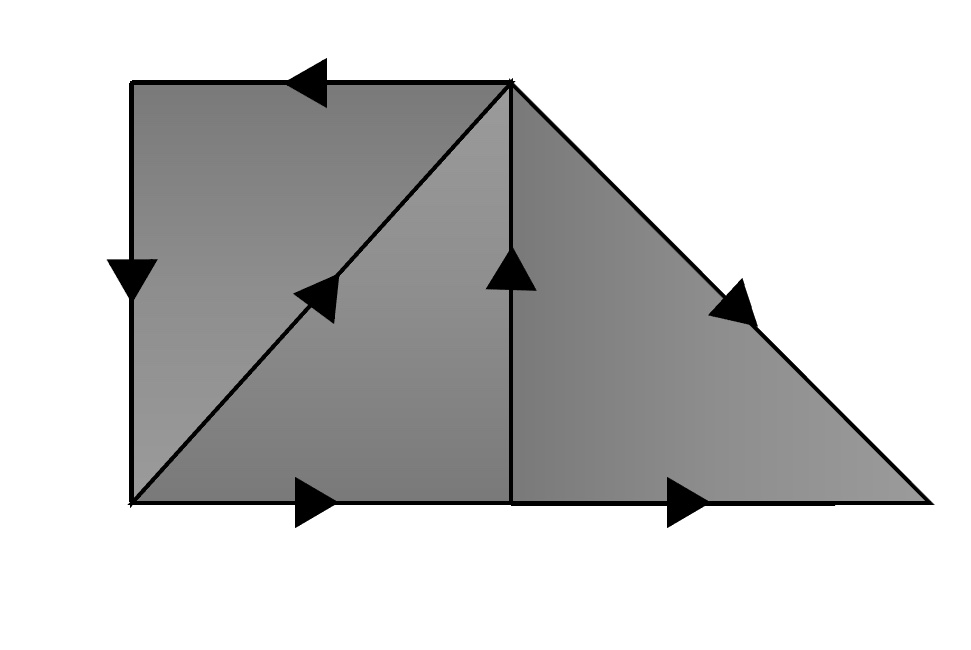}
        \caption{$L$}
    \end{subfigure}
    \begin{subfigure}[t]{0.42\textwidth}
        \centering
        \includegraphics[height=2.5cm]{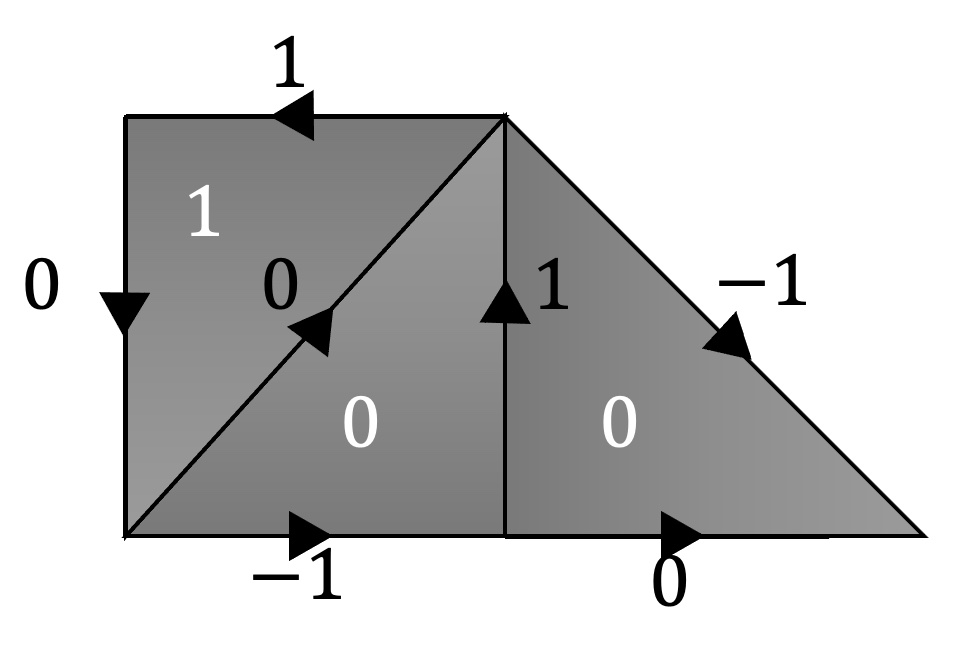}
        \caption{death cochain}
    \end{subfigure}
    \caption{The death cochain for the cohomology class $\beta$ which dies between $K$ and $L$. We show the coefficients of the death cochain on each $2$-simplex in white and the coefficients of the death potential on each edge in black. }
    \label{fig:exampled}
\end{figure}

\begin{remark}[Homogeneity of birth and death cochains]\label{rem:invariance}
Since persistent cohomology classes are typically only defined up to scalar multiplication, it is worthwhile observing the effects of scalar multiplication on birth and death cochains. In each of the optimization problems, multiplying the persistent cohomology class by a scalar multiplies the feasible regions by the same scalar. Since the objective function in each case is absolutely homogeneous, it follows that the birth and death cochains are also scaled by the same amount.
\end{remark}

\subsection{Birth and death cochains in persistence modules}
\label{sec:birth-death cochain}

We now place the preceding construction in the setting of persistent cohomology.  
Let $\{X_t\}_{t \in \mathbb{R}}$ be a filtration of simplicial complexes and fix a bar $[b, d)$ in degree $k$. Associated to the interval module for this bar is a family of non-zero cohomology classes defined up to scalar multiplication
\[
\{ c_t \in \rmH^k(X_t) \}_{t \in [b,d)}
\]
such that whenever $b \le s < t < d$, the class $c_t$ maps to $c_s$ under the homomorphism induced by the inclusion $X_s \subseteq X_t$.
We will refer to the collection of the $c_t$ as a \emph{persistent cohomology class} for the bar $[b, d)$. Birth and death events occur at the transition pairs
\[
X_{b-\varepsilon} \subseteq X_{b+\varepsilon}
\quad\text{and}\quad
X_{d-\varepsilon} \subseteq X_{d+\varepsilon},
\]
for sufficiently small $\varepsilon > 0$. Applying the definitions of \Cref{def:birth-cochain,def:death cochain} to these pairs yields the following.

\begin{definition}[$\varepsilon$-birth and death cochains]
\label{def:epsiloncochain}
    Let $c$ be the persistent cohomology class for a bar $[b, d)$ and let $0 < \varepsilon < d-b$.  
    The \emph{$\varepsilon$-birth cochain for $c$} is the birth cochain of $c_{b+\varepsilon}$ from $X_{b-\varepsilon}$ to $X_{b+\varepsilon}$. 
    The \emph{$\varepsilon$-death cochain for $c$} is the death cochain of $c_{d-\varepsilon}$ from $X_{d-\varepsilon}$ to $X_{d+\varepsilon}$.
\end{definition}

See \Cref{fig:epsilon-timeline} for a schematic visualization of the construction in \Cref{def:epsiloncochain}.
We remark that it is possible to choose a different $\varepsilon$ for each of the four occurrences in \Cref{def:epsiloncochain} above, but in practice it is convenient to have one parameter.

\begin{remark}
    Let $X_{t_0} \subseteq X_{t_1} 
\subseteq \cdots \subseteq X$ be a filtration such that $X_{t_i}$ and $X_{t_{i+1}}$ differ by a single simplex for each $i$. If $\varepsilon < \min_i \{t_{i+1} -t_i\}$, then the $\varepsilon$-birth cochain for a bar $[b,d)$ has support equal to the unique simplex in $X_{b+\varepsilon} \setminus X_b$. Similarly, any $\varepsilon$-death cochain has support equal to single simplex. In other words, for small $\varepsilon$, the birth and death cochains are dual to birth and death simplices respectively. We have thus defined a generalization of birth and death simplices as promised.  
\end{remark}

\section{Interpretations of birth and death cochains}
In this section we reinterpret birth and death cochains from several complementary perspectives. 

\subsection{Birth and death cochains via relative cohomology}

We give an alternative characterization of birth and death cochains using relative cohomology.
Throughout, let $K\subset L$ be simplicial complexes.

Recall from \Cref{sec:relative cohomology} that the relative cochain group $\rmC^k(L,K)$ consists of $k$--cochains on $L$
that vanish on $K$, and that the relative cohomology group $\rmH^k(L,K)$ is defined as
the cohomology of this complex.
The inclusion $\iota:K\hookrightarrow L$ induces the long exact sequence
\cite{hatcher2000}
\begin{equation}\label{eq:les}
\xymatrix{
\cdots \ar[r] &
\rmH^{k}(L,K) \ar[r]^{q^\#} &
\rmH^{k}(L) \ar[r]^{\iota^\#} &
\rmH^{k}(K) \ar[r]^{\lambda^{\#}} &
\rmH^{k+1}(L,K) \ar[r] & \cdots .
}
\end{equation}
Here $q^\#$ is induced by the inclusion of relative cochains into absolute cochains,
$\iota^\#$ is induced by restriction to $K$, and
$\lambda^{\#}$ is the connecting homomorphism, defined by
\begin{equation}\label{eq:connecting homomorphism}
\lambda^{\#}([\beta]_K)
:=
[\delta_L^k \beta']_{(L,K)},
\end{equation}
where $\beta'\in \rmC^k(L)$ is any extension of $\beta$ to $L$.
In particular, one may take $\beta'$ to be the extension of $\beta$ by zero outside $K$.
The connecting homomorphism is well-defined: if $\beta'$ extends $\beta$, then $(\delta_L^k \beta')|_K = \delta_K^k \beta = 0$ since $\beta$ is a cocycle on $K$, so $\delta_L^k \beta' \in \rmC^{k+1}(L,K)$. 

The proposition below on birth cochains is motivated by the following observation. If $[\alpha]_L$ is born between $K$ and $L$, then $\iota^\#([\alpha]_L) = 0 \in \rmH^k(K)$, so by the exactness of (\ref{eq:les}), $[\alpha]_{L}$ is the image of $q^\#$. 

\begin{proposition}\label{prop:birth-cochain}
    The birth cochain for $[\alpha]_L$ from $K$ to $L$ is the unique element of
    $$
    \argmin_{\alpha' \in \rmZ^k(L,K)} \left\{ \|\alpha'\|_2 \;\middle|\; q^\#([\alpha']_{(L,K)}) = [\alpha]_{L} \right\}.
    $$
\end{proposition}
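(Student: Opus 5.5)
The plan is to show that the feasible set in \Cref{prop:birth-cochain} coincides exactly with the feasible set in \Cref{def:birth-cochain}. Both problems minimize the same objective $\|\cdot\|_2$ on subsets of $\rmC^k(L)$. So once the two feasible sets agree, the minimizers agree. Uniqueness then follows from \Cref{rem:uniqueness}: the common feasible set is nonempty (by the observation preceding \Cref{def:birth-cochain}) and affine, and the squared norm is strictly convex.

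\textbf{Forward inclusion.} Let $\alpha'$ be feasible for \Cref{def:birth-cochain}, so $\alpha' \in \rmC^k(L)$, $\alpha'\vert_K = 0$ and $\alpha' \in [\alpha]_L$. Then $\alpha' \in \rmC^k(L,K)$. Since $\alpha'$ is a cocycle on $L$ and the relative coboundary is the restriction of $\delta^k_L$, we get $\alpha' \in \rmZ^k(L,K)$. The map $q^\#$ is induced by the inclusion of relative cochains into absolute cochains. Hence $q^\#([\alpha']_{(L,K)}) = [\alpha']_L = [\alpha]_L$, and $\alpha'$ is feasible for the relative problem.

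\textbf{Reverse inclusion.} Let $\alpha' \in \rmZ^k(L,K)$ with $q^\#([\alpha']_{(L,K)}) = [\alpha]_L$. Being relative, $\alpha'$ vanishes on $K$. Being a relative cocycle, $\delta^k_L \alpha' = 0$, so it is an absolute cocycle. The condition on $q^\#$ says precisely that $\alpha' \in [\alpha]_L$. So $\alpha'$ is feasible for \Cref{def:birth-cochain}.

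The only point needing care is to make explicit that $\delta^k$ on $\rmC^\bullet(L,K)$ is literally $\delta^k_L$ restricted. Therefore $\rmZ^k(L,K) = \rmZ^k(L) \cap \rmC^k(L,K)$, and $q^\#$ sends a relative class to the absolute class of the same cochain. Once this is recorded, both inclusions are immediate, and there is no real obstacle.
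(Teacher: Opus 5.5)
Your proposal is correct and follows essentially the same approach as the paper: both show that $\alpha'\in\rmZ^k(L,K)$ amounts to $\alpha'\vert_K=0$ (together with being a cocycle on $L$), and that $q^\#([\alpha']_{(L,K)})=[\alpha]_L$ amounts to $\alpha'\in[\alpha]_L$, so the two optimization problems coincide. You spell out more carefully than the paper why $\rmZ^k(L,K)=\rmZ^k(L)\cap\rmC^k(L,K)$, and you explicitly cite \Cref{rem:uniqueness} for uniqueness.
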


\begin{proof}
Comparing with \Cref{def:birth-cochain}, we see that $\alpha' \in \rmZ^k(L, K)$ is precisely the condition that $\alpha' \vert_K = 0$, and $q^\#([\alpha']_{(L,K)}) = [\alpha]_{L}$ is precisely the condition that $\alpha'$ represents $[\alpha]$ as an element of $\rmH^k (L)$.
\end{proof}

The proposition below on death cochains is motivated by the following observation. Since $[\beta]_{K}$ cannot be extended to a cocycle on $L$, it is not in the image of $\iota^\#$. By the exactness of (\ref{eq:les}), the cohomology class $\lambda^{\#}([\beta]_K)$ is nonzero.

\begin{proposition}\label{prop:death cochain}
    The death cochain for $[\beta]_K$ from $K$ to $L$ is the unique element of
    $$
    \argmin_{\omega \in \rmZ^{k+1}(L, K)} \left\{ \|\omega\|_2 \;\middle|\; [\omega]_{(L,K)} = \lambda^{\#}([\beta]_{K}) \right\}.
    $$
\end{proposition}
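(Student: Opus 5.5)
The plan is to show that the two optimization problems have the same feasible set, once the death problem is rewritten in terms of $\omega = \delta^k_L\beta'$; equality of the minimizers then follows. Let $F_1 := \{\delta^k_L \beta' \mid \beta' \in \rmC^k(L),\ \beta'|_K \in [\beta]_K\}$, the feasible set of the reformulated problem after \Cref{def:death cochain}. Let $F_2 := \{\omega \in \rmZ^{k+1}(L,K) \mid [\omega]_{(L,K)} = \lambda^\#([\beta]_K)\}$. Each set is an affine subspace of $\rmC^{k+1}(L)$, so the strictly convex objective $\|\cdot\|_2$ has a unique minimizer on each, exactly as in \Cref{rem:uniqueness}. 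It therefore suffices to prove $F_1 = F_2$, together with one more fact: the minimum of $\|\delta_L^k\beta'\|$ over extensions of the fixed representative $\beta$ equals the minimum over $F_1$. This extra fact is what justifies the reformulation, and it will also give \Cref{cor:welldefined}.

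\textbf{Step 1 ($F_1 \subseteq F_2$).} Take $\beta'$ with $\beta'|_K = \beta + \delta^{k-1}_K g$ for some $g \in \rmC^{k-1}(K)$. Then $(\delta^k_L\beta')|_K = \delta^k_K(\beta'|_K) = 0$, so $\delta^k_L\beta' \in \rmC^{k+1}(L,K)$. It is a relative cocycle because $\delta\delta = 0$. By \eqref{eq:connecting homomorphism}, which allows any extension of any representative, its relative class is $\lambda^\#([\beta]_K)$.

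\textbf{Step 2 ($F_2 \subseteq F_1$).} Let $\omega \in F_2$. By the definition of $\lambda^\#$, using the extension by zero $\epsilon(\beta)$, we have $\omega - \delta^k_L\epsilon(\beta) = \delta^k_L \gamma$ for some relative cochain $\gamma \in \rmC^k(L,K)$. Hence $\omega = \delta^k_L(\epsilon(\beta) + \gamma)$. Since $\gamma|_K = 0$, the cochain $\epsilon(\beta) + \gamma$ restricts to $\beta$ on $K$, so $\omega \in F_1$. In fact $\omega$ is obtained from an extension of the fixed representative $\beta$.

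\textbf{Step 3 (the extra fact).} Step 2 shows every element of $F_1$ is $\delta_L^k$ of an extension of $\beta$ itself. Therefore the set $\{\delta^k_L\beta' \mid \beta'|_K = \beta\}$ equals $F_1 = F_2$, and $F_2$ depends only on the class $[\beta]_K$. So the death cochain of \Cref{def:death cochain} is the unique minimizer over $F_2$, which is the claim. As a corollary, the death cochain is independent of the chosen representative.

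The main point needing care is Step 2. The preimage there must be taken in relative cochains $\rmC^k(L,K)$, not merely in $\rmC^k(L)$, so that the restriction to $K$ stays exactly $\beta$. This is where exactness is implicitly used, and it is guaranteed by the definition of relative cohomology.
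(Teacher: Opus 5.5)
Your proof is correct and follows essentially the same route as the paper. Both arguments show that $\{\delta^k_L\beta' \mid \beta'|_K=\beta\}$ equals the set of relative cocycles representing $\lambda^{\#}([\beta]_K)$, and both obtain the reverse inclusion by writing $\omega-\delta^k_L\beta_1$ as $\delta^k_L$ of a relative cochain. Your one addition is to also sandwich the representative-independent set $F_1$ between these two sets, which gives the well-definedness corollary at the same time; the paper instead deduces it afterwards from the fact that the relative-cohomology feasible set depends only on the class $[\beta]_K$.
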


\begin{proof}
    It suffices to show that the following equality of sets holds:
    \begin{equation}\label{eq:death cochain}
        \left\{ \omega\;\middle|\;\omega=\delta^k_L\beta' \text{ for }\beta' \in \rmC^k(L), \beta'\vert_{K} = \beta \right\} 
        = 
        \left \{ \omega \in \rmZ^{k+1}(L,K)\;\middle|\;[\omega]_{(L,K)} = \lambda^{\#}([\beta]_{K}) \right\}.
    \end{equation}

    Suppose $\omega=\delta_L^k\beta'$ for some $\beta'\in \rmC^k(L)$ with $\beta'|_K=\beta$. Since $\omega$ is a coboundary in $L$, it is a cocycle in $L$.
    Moreover, since the restriction of $\beta'$ to $K$ equals $\beta$, and $\beta$ is a cocycle on $K$, it follows that
    $\omega\vert_{K} = \delta^k_K(\beta'\vert_{K}) = \delta^k_K\beta = 0$. Thus $\omega$ vanishes on $K$, so $\omega\in \rmZ^{k+1}(L,K)$. 
    Since $\beta'$ extends $\beta$, the definition of the connecting homomorphism gives
    \[
    [\omega]_{(L,K)}
    = [\delta_L^k\beta']_{(L,K)}
    = \lambda^{\#}([\beta]_K).
    \]
    Hence the inclusion `$\subseteq$' in \eqref{eq:death cochain} holds.

    Conversely, suppose $\omega \in \rmZ^{k+1}(L,K)$ satisfies 
    $[\omega]_{(L,K)} = \lambda^{\#}([\beta]_K)$. 
    Let $\beta_1 \in \rmC^k(L)$ be any extension of $\beta$, so 
    $\beta_1|_K=\beta$. By definition of the connecting homomorphism,
    \(
    \omega - \delta_L^k \beta_1 \in \rmB^{k+1}(L,K).
    \)
    Thus there exists $\beta_2 \in \rmC^k(L,K)$ such that
    \(
    \omega - \delta_L^k \beta_1 = \delta_L^k \beta_2.
    \)
    Setting $\beta'=\beta_1+\beta_2$, we obtain 
    $\omega=\delta_L^k\beta'$. Moreover, since $\beta_2$ vanishes on $K$,
    \(
    \beta'|_K=\beta_1|_K+\beta_2|_K=\beta+0=\beta.
    \)
    Hence $\omega$ lies in the left-hand set of \eqref{eq:death cochain},
    which proves the inclusion `$\supseteq$'.
\end{proof}

\begin{corollary}\label{cor:welldefined}
    Suppose $[\beta]_K$ dies between $K$ and $L$. Then the death cochain does not depend on the representative for $[\beta]_K$.
\end{corollary}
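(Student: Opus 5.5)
The plan is to read the corollary off directly from \Cref{prop:death cochain}. That proposition identifies the death cochain for $[\beta]_K$ as the unique minimizer of $\|\omega\|_2$ over the set
\[
S([\beta]_K) := \left\{ \omega \in \rmZ^{k+1}(L,K) \;\middle|\; [\omega]_{(L,K)} = \lambda^{\#}([\beta]_K) \right\}.
\]
This set depends on $\beta$ only through its class $[\beta]_K$, via the connecting homomorphism $\lambda^{\#}$. So if $\beta_1$ and $\beta_2$ are two cocycles on $K$ with $[\beta_1]_K = [\beta_2]_K$, the two optimization problems have literally the same feasible set and the same objective. By uniqueness of the minimizer (\Cref{rem:uniqueness}, or strict convexity of $\|\cdot\|_2^2$ on the affine set $S$), they therefore have the same solution.

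The steps, in order, are as follows.

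First, I note that if $[\beta]_K$ dies between $K$ and $L$, then so does any other representative $\beta_2$ of the same class. Dying is the statement that the class is not in the image of $\iota^\#$, which is a property of the class. Hence the death cochain for $\beta_2$ is defined.

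Second, I apply \Cref{prop:death cochain} separately to $\beta_1$ and to $\beta_2$. Each death cochain is then the unique norm-minimizer over $S([\beta_1]_K)$ and $S([\beta_2]_K)$ respectively.

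Third, I use that $\lambda^{\#}$ is a well-defined map on cohomology. This gives $\lambda^{\#}([\beta_1]_K) = \lambda^{\#}([\beta_2]_K)$, so $S([\beta_1]_K) = S([\beta_2]_K)$, and the minimizers coincide.

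The only point needing care is the third step: that $\lambda^{\#}$ is well-defined on classes and not just on cocycles. The paper asserts this via the standard long exact sequence, but I would verify it directly. If $\beta_2 = \beta_1 + \delta_K^{k-1} f$, extend $f$ by zero to $\epsilon(f)$ and use the extension $\beta_1' + \delta_L^{k-1}\epsilon(f)$ of $\beta_2$. Its coboundary equals $\delta_L^k \beta_1'$ because $\delta\delta = 0$. The remaining issue is the independence of the choice of extension, which was already shown in the proof of \Cref{prop:death cochain}: two extensions differ by a relative cochain in $\rmC^k(L,K)$, so their coboundaries differ by an element of $\rmB^{k+1}(L,K)$. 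I expect no real obstacle beyond this bookkeeping.
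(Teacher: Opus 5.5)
Your proposal is correct and follows the paper's route: the paper gives no separate argument, saying only that the corollary follows from \Cref{prop:death cochain}. There the feasible set depends on $\beta$ only through $\lambda^{\#}([\beta]_K)$, so uniqueness of the minimizer finishes the proof. Your direct checks that $\lambda^{\#}$ is well-defined on classes and that dying is a property of the class are extra bookkeeping that the paper leaves to the standard long exact sequence.
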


\subsection{Birth cochains in degree $0$}
\label{subsec:birth-cochain-0}

Birth cochains admit a particularly simple description in degree $0$, 
where cohomology classes have canonical representatives. The situation for death cochains in degree $0$ turns out to be equivalent to so-called Laplace learning on graphs (see \Cref{sec:laplacelearning}). Since degree-$0$ cohomology depends only on the $1$-skeleton of the filtration, it suffices to work with graphs.  Throughout this subsection, let $G=(V,E)$ be a finite graph with vertex set $V$ and edge set $E$, equipped with a function $f:V\to\mathbb{R}$ inducing a sublevel-set filtration. We assume for simplicity that $f$ is injective; if not, then certain non-canonical choices might need to be made to define persistent cohomology classes (say by fixing an ordering of the vertices). 

For any $t\in\mathbb{R}$, write
\(
G_t := f^{-1}((-\infty,t])
\)
for the sublevel set at $t$, and let $V_t$ denote its vertex set. It is easy to see that $0$-cocycles for $G_t$ are functions $V_t \to \mathbb{R}$ which are constant on connected components, and that each cocycle is the unique representative for a cohomology class. For any vertex $v\in V_t$, denote by $G_t^{v} \subset G_t$ the connected component containing $v$, and by $V_t^{v}$ the vertex set of $G_t^{v}$.  

Let $[b,d)$ be a finite bar in the $0$-dimensional persistence barcode for the filtration $\{G_t\}_{t \in \mathbb{R}}$ and let $v$ be the vertex with $f(v)=b$. Let $d^- = \max\{ t \in \mathsf{Im}f \mid t < d\}$. Then a persistent cohomology class for this bar is given by 
$\{[\alpha_t]\}_{t \in [b,d)}$
where 
$$
\alpha_t(u) = \begin{cases}
    1 & u \in {V^v_{d^-} \cap G_t} \\
    0 & \text{ otherwise }
\end{cases}
$$
Indeed, the fact that a birth event takes place at $b$ implies that $\alpha_b$ has support $\{v\}$, and the rest is determined by the fact that $\alpha_t$ is constant on connected components. Since cohomology classes have unique representatives, the $\varepsilon$-birth cochain  is just $\alpha_{b+\varepsilon}$, that is, the indicator function on all vertices which will be connected to $v$ before the death time $d$ and whose filtration values are at most $b+\varepsilon$. 

\subsection{Death cochain via Laplace learning}\label{sec:laplacelearning}

Laplace learning is a classical method in semi-supervised learning that constructs a function on a graph by minimizing Dirichlet energy subject to fixed boundary values \cite{zhu2003semi}.
Let $L$ be a finite simplicial complex of dimension $1$, i.e., a graph, and $K$ a subcomplex of $L$.
Let $\Delta^{0,\mathrm{up}}_L := (\delta_L^0)^{*}\delta_L^0$ denote the (combinatorial) graph Laplacian.
Given vertex labels $g: K^0 \to \mathbb{R}$, Laplace learning solves
\begin{equation*}
    \argmin_{f \in \rmC^0(L)} \left\{ \|\delta^0_L f\|_2^2 \;\middle|\; \, f|_K = g \right\}.
\end{equation*}
Equivalently, the solutions $f$ are characterized by the following conditions \cite[Section~2]{zhu2003semi}:
\begin{equation}\label{eq:dirichlet}
f|_K=g 
\qquad\text{and}\qquad
(\Delta^{0,\mathrm{up}}_L f)\big|_{L\setminus K}=0.
\end{equation}

Let $K \subseteq L$ be simplicial complexes and $[\beta]_K \in \rmH^k(K)$ die between $K$ and $L$.
Recall from \Cref{def:death cochain} that the death cochain of $[\beta]_K$ is defined as $\delta^k_L \hat{\beta}$, where $\hat{\beta}$ is a death potential that solves the optimization problem
\begin{equation*}
    \argmin_{\beta' \in \rmC^{k}(L)} \left\{ \|\delta^k_{L} \beta'\|_2 \;\middle|\; \beta'\vert_K = \beta \right\}.
\end{equation*}
Thus, death cochains may be viewed as higher-order generalizations of Laplace learning. In the graph setting, one prescribes arbitrary values on $K$, whereas here the prescribed cochain $\beta$ is required to be a cocycle, since it represents a cohomology class that becomes trivial in $L$. One could just as easily formulate the same variational problem (\Cref{eq:deat-cochain-def}) for an arbitrary cochain $\beta$ on $K$; however, without the cocycle condition some of the topological interpretations (such as the relative cohomology viewpoint) would be lost.

By analogy with the graph case (see \Cref{eq:dirichlet}), the death cochain problem can be viewed as a higher–order Dirichlet problem. In particular, the minimizer satisfies a relative harmonicity condition analogous to Laplace learning. We state this precisely in \Cref{prop:death-cochain-laplace-learning} and include a brief proof for completeness.
Let
\[
\Delta^{k,\mathrm{up}}_{L} := (\delta_L^{k})^* \delta_L^k : \rmC^k(L) \to \rmC^k(L)
\]
denote the $k$-th up Laplacian on $L$.

\begin{proposition}\label{prop:death-cochain-laplace-learning}
Let $[\beta]_K\in \rmH^k(K)$ die between $K$ and $L$. 
The death potentials $\hat{\beta}$ are characterized by the following conditions:
\[
\hat{\beta}|_K=\beta 
\qquad\text{and}\qquad
(\Delta^{k,\mathrm{up}}_L \hat{\beta})\big|_{L\setminus K}=0.
\]
\end{proposition}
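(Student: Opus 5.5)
The plan is to treat the death-potential problem as minimizing the convex quadratic $F(\beta') = \|\delta^k_L \beta'\|_2^2$ over the affine set $A = \{\beta' \in \rmC^k(L) : \beta'|_K = \beta\}$. Since $\|\cdot\|_2$ is nonnegative, minimizing $\|\delta^k_L\beta'\|_2$ is the same as minimizing its square, so the death potentials are exactly the minimizers of $F$ on $A$. The affine set can be written as $A = \epsilon(\beta) + \rmC^k(L,K)$, where $\epsilon$ is extension by zero. Its direction space is therefore the relative cochain group $\rmC^k(L,K)$, which is the space of cochains supported on the $k$-simplices of $L\setminus K$.

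First I will use the standard fact that a convex differentiable function restricted to an affine subspace is minimized at $\hat\beta$ exactly when its directional derivative vanishes along every direction of that subspace. For one direction, if $\hat\beta$ is a minimizer and $\gamma \in \rmC^k(L,K)$, then the scalar function $s \mapsto F(\hat\beta + s\gamma)$ is minimized at $s=0$. Its derivative there is $2\langle \delta^k_L\hat\beta, \delta^k_L\gamma\rangle = 2\langle \Delta^{k,\mathrm{up}}_L\hat\beta, \gamma\rangle$, so this inner product must vanish. For the converse, expanding the square gives
\[
F(\hat\beta+\gamma) = F(\hat\beta) + 2\langle \Delta^{k,\mathrm{up}}_L\hat\beta,\gamma\rangle + \|\delta^k_L\gamma\|_2^2 .
\]
If the middle term vanishes for every $\gamma \in \rmC^k(L,K)$, this is at least $F(\hat\beta)$, so $\hat\beta$ is a minimizer. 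No strict convexity is needed, which fits the fact that death potentials need not be unique.

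Next I will translate the orthogonality condition. The statement $\langle \Delta^{k,\mathrm{up}}_L\hat\beta, \gamma\rangle = 0$ for all $\gamma\in \rmC^k(L,K)$ says that $\Delta^{k,\mathrm{up}}_L\hat\beta$ lies in $\rmC^k(L,K)^\perp$. By the orthogonal decomposition \eqref{eq:orth-decomp}, this complement is $\epsilon(\rmC^k(K))$, the cochains vanishing on every $k$-simplex of $L\setminus K$. That is exactly $(\Delta^{k,\mathrm{up}}_L\hat\beta)|_{L\setminus K} = 0$. Concretely, one can test against the indicator cochains of the $k$-simplices in $L\setminus K$.

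Combined with the feasibility condition $\hat\beta|_K = \beta$, this gives the characterization. The argument does not use that $\beta$ is a cocycle, matching the remark that the variational problem makes sense for any $\beta$. The only point needing care is the converse direction, namely that stationarity implies global minimality without uniqueness; the displayed expansion settles this. Existence of a minimizer is not needed for the characterization itself, though it follows because $F$ is a nonnegative quadratic on a nonempty affine set.
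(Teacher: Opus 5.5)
Your proposal is correct and follows essentially the same route as the paper: you write feasible cochains as $\epsilon(\beta)+\gamma$ with $\gamma\in\rmC^k(L,K)$, set the first variation $2\langle \Delta^{k,\mathrm{up}}_L\hat{\beta},\eta\rangle$ to zero for every $\eta\in\rmC^k(L,K)$, and use $\rmC^k(L,K)^\perp=\epsilon(\rmC^k(K))$ to turn this into $(\Delta^{k,\mathrm{up}}_L\hat{\beta})|_{L\setminus K}=0$. Your expansion $F(\hat\beta+\gamma)=F(\hat\beta)+2\langle\Delta^{k,\mathrm{up}}_L\hat\beta,\gamma\rangle+\|\delta^k_L\gamma\|_2^2$ also proves the converse (stationarity implies minimality), which the paper's proof leaves implicit.
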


\begin{proof}
Let $\epsilon : \rmC^k(K) \to \rmC^k(L)$ denote the extension-by-zero map, and write 
$\hat{\beta}=\epsilon(\beta)+\gamma$ with $\gamma\in \rmC^k(L,K)$, with respect to the decomposition in \eqref{eq:orth-decomp}. Then the objective function in the death cochain problem becomes a quadratic polynomial in $\gamma$. 
At a minimum, its gradient with respect to $\gamma$ must vanish in all
admissible directions $\eta\in \rmC^k(L,K)$, i.e.
\[
\frac{d}{dt}\Big|_{t=0}\|\delta_L^k(\hat{\beta}+t\eta)\|_2^2
=\frac{d}{dt}\Big|_{t=0}\left(\|\delta_L^k\hat{\beta}\|_2^2+ 2t\langle \Delta_L^{k,\mathrm{up}}\hat{\beta},\eta\rangle + t^2 \|\delta_L^k \eta\|_2^2\right)
=2\langle \Delta_L^{k,\mathrm{up}}\hat{\beta},\eta\rangle=0.
\]
Thus $\Delta_L^{k,\mathrm{up}}\hat{\beta}$ is orthogonal to $\rmC^k(L,K)$,
which is equivalent to
\(
(\Delta_L^{k,\mathrm{up}}\hat{\beta})|_{L\setminus K}=0.
\)
\end{proof}

\subsection{Death cochain via persistent Laplacian}
\label{sec:p laplacian}
The persistent Laplacian extends the combinatorial Laplacian to pairs of simplicial complexes
\cite{lieutier2014persistentharmonic,wang2020persistent,memoli2022persistent}. 
In the chain-complex (i.e.~homology) setting, a characterization of the persistent Laplacian in terms of Schur complements was established in \cite{memoli2022persistent}. 
The corresponding formulation in the cochain setting was developed in \cite{gulen2023generalization}. 
We briefly recall the relevant constructions.

Let $\Delta:V\to V$ be a self-adjoint positive semidefinite operator on a
finite-dimensional inner product space, and let
$V = W \oplus W^\perp$ be an orthogonal decomposition.
With respect to this decomposition (and any choice of bases adapted to it),
the operator $\Delta$ admits a block matrix representation
\begin{equation} \label{eq:block matrix}
    \Delta =
    \begin{pmatrix}
    \Delta_{WW} & \Delta_{W\perp} \\
    \Delta_{\perp W} & \Delta_{\perp\perp}
    \end{pmatrix}.
\end{equation}
The \emph{(generalized) Schur complement} of $\Delta_{\perp\perp}$ in $\Delta$ is defined as 
\(
\Delta_{WW} - \Delta_{W \perp} \Delta_{\perp\perp}^+ \Delta_{\perp W},
\)
where $(\cdot)^+$ denotes the Moore–Penrose pseudoinverse.
As noted in \cite[Proposition 9]{gulen2023generalization}, viewed as a linear operator, the Schur complement is independent of the choice of bases and therefore is canonically determined by the pair $(\Delta,W)$. This operator is called the \emph{Schur restriction} of $\Delta$ onto $W$ and denoted as \[\Sch(\Delta,W):W \to W.\]

As before, let $\epsilon: \rmC^k(K) \hookrightarrow \rmC^k(L)$ be the extension by zero map.
Define the subspace
\[
    \rmC^{k+1}_{L,K} := \left\{ \xi \in \rmC^{k+1}(L) \;\middle|\; (\delta^k_L)^*(\xi) \in \epsilon(\ker((\delta^{k-1}_{K})^*)) \right\} \subset \rmC^{k+1}(L),
\]
and let $\partial^{L,K}_{k+1}$ denote the restriction of $(\delta^{k}_{L})^*$ to $\rmC^{k+1}_{L,K}$.
The \emph{degree-$k$ persistent up Laplacian} is 
\[
\Delta^{k,\mathrm{up}}_{L,K} := \partial^{L,K}_{k+1}\circ (\partial^{L,K}_{k+1})^* \colon\rmC^{k}(L, K)^\perp \to\rmC^{k}(L, K)^\perp = \epsilon(\rmC^k(K)).
\]

\begin{remark}\label{rmk:schur and laplacian}
    Let 
    \(
    W := \epsilon\big(\ker((\delta^{k-1}_{K})^*)\big)
    \subseteq \epsilon(\rmC^k(K))
    \).
    It follows from \cite[Proposition~10]{gulen2023generalization} that the Schur restriction of $\delta_L^k$ onto $W$ and the degree-\(k\) persistent up Laplacian satisfies 
    \[
    \Delta^{k,\mathrm{up}}_{L,K}
    \;=\;
    \iota_W\;\circ\;
    \Sch(\delta_L^k, W)
    \;\circ\;
    \proj_W,
    \]
    where \(\proj_W:\epsilon(\rmC^k(K))\to W\) is the orthogonal projection and
    \(\iota_W:W\hookrightarrow \epsilon(\rmC^k(K))\) is the inclusion.
    We give a detailed explanation of the above equality in
    \Cref{app:psistentLaplacianproofs}.
\end{remark}

\begin{theorem} 
\label{thm:relation to p laplacian}
Let $K\subset L$ be simplicial complexes and let $[\beta]_K\in \rmH^k(K)$ die between $K$ and $L$.
Then, the squared norm of the death cochain $\delta_L^k \hat{\beta}$ satisfies
\begin{equation}\label{eq:death-schur}
    \|\delta_L^k \hat{\beta}\|_2^2
    =
    \left\langle
    \Sch\left(\Delta_L^{k,\mathrm{up}},\epsilon(\rmC^k(K))\right)\epsilon(\beta),\,
    \epsilon(\beta)
    \right\rangle.
\end{equation}
When $\beta$ is harmonic on $K$, we have
\begin{equation}\label{eq:death-cochain-norm}
    \|\delta_L^k \hat{\beta}\|_2^2
    =\left\langle \Delta^{k,\mathrm{up}}_{L,K} (\epsilon(\beta)), \epsilon (\beta)\right\rangle.
\end{equation}
\end{theorem}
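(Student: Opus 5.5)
We treat the death-cochain problem as minimizing a quadratic form over an affine subspace and identify its minimum value with a Schur complement. Using the orthogonal decomposition \eqref{eq:orth-decomp}, write $V=\rmC^k(L)=U\oplus U^\perp$ with $U:=\epsilon(\rmC^k(K))$ and $U^\perp=\rmC^k(L,K)$. Every feasible $\beta'$ in \eqref{eq:deat-cochain-def} has the form $\beta'=\epsilon(\beta)+\gamma$ with $\gamma\in U^\perp$. Set $\Delta:=\Delta^{k,\mathrm{up}}_L$, which is self-adjoint and positive semidefinite, and write it in block form as in \eqref{eq:block matrix}. Then
\[
\|\delta^k_L\beta'\|_2^2=\langle\Delta\beta',\beta'\rangle
=\langle \Delta_{UU}x,x\rangle+2\langle \Delta_{\perp U}x,\gamma\rangle+\langle\Delta_{\perp\perp}\gamma,\gamma\rangle,
\qquad x:=\epsilon(\beta).
\]

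The first step is to minimize over $\gamma$. By \Cref{prop:death-cochain-laplace-learning}, or directly by the first-order condition, a minimizer satisfies $\Delta_{\perp\perp}\gamma=-\Delta_{\perp U}x$.

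The main point is solvability, so that the pseudoinverse gives a genuine minimizer. Since $\Delta\succeq 0$, its restriction to the block $U^\perp$ gives $\ker\Delta_{\perp\perp}\subseteq\ker\Delta_{U\perp}$. Indeed, if $\Delta_{\perp\perp}\gamma=0$ then $\langle\Delta\gamma,\gamma\rangle=0$, so $\Delta\gamma=0$. Taking adjoints, $\operatorname{im}\Delta_{\perp U}\subseteq\operatorname{im}\Delta_{\perp\perp}$. Hence $\gamma^\star=-\Delta_{\perp\perp}^+\Delta_{\perp U}x$ solves the normal equation, and since the objective is convex in $\gamma$ it is a minimizer. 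Every death potential $\hat\beta$ has the same minimal value of $\|\delta^k_L\hat\beta\|_2^2$. Substituting $\gamma^\star$ and using $\Delta_{\perp\perp}^+\Delta_{\perp\perp}\Delta_{\perp\perp}^+=\Delta_{\perp\perp}^+$ gives
\[
\|\delta^k_L\hat\beta\|_2^2=\langle(\Delta_{UU}-\Delta_{U\perp}\Delta_{\perp\perp}^+\Delta_{\perp U})x,x\rangle=\langle\Sch(\Delta,U)\epsilon(\beta),\epsilon(\beta)\rangle,
\]
which is \eqref{eq:death-schur}.

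For \eqref{eq:death-cochain-norm}, assume $\beta$ is harmonic on $K$. Then $(\delta^{k-1}_K)^*\beta=0$, so $x\in W=\epsilon(\ker(\delta^{k-1}_K)^*)$ and $\proj_W x=x$. We then invoke \Cref{rmk:schur and laplacian}, which gives $\Delta^{k,\mathrm{up}}_{L,K}=\iota_W\circ\Sch(\cdot,W)\circ\proj_W$. It remains to compare $\langle \Sch(\Delta,W)x,x\rangle$ with $\langle\Sch(\Delta,U)x,x\rangle$ for $x\in W$.

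The natural route is the variational characterization already established: for $x\in W$, $\langle\Sch(\Delta,W)x,x\rangle$ is the minimum of $\langle\Delta(x+z),x+z\rangle$ over $z\in W^\perp$, where $W^\perp\supseteq U^\perp$. So the $W$-version minimizes over a larger set, and we must show it loses nothing. Write $z=u+\gamma$ with $u\in U\ominus W=\epsilon(\operatorname{im}\delta^{k-2}_K)$ and $\gamma\in U^\perp$. Then $u=\epsilon(\delta^{k-2}_K f)$, and
\[
x+u+\gamma = x+\delta_L^{k-2}\epsilon(f)+\gamma',
\]
where $\gamma'$ vanishes on $K$. Since $\delta\delta=0$, the term $\delta_L^{k-2}\epsilon(f)$ does not change $\delta^k_L$, so the minimum over $W^\perp$ equals the minimum over $U^\perp$.

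The remaining difficulty is an interpretation mismatch. The remark as literally stated writes $\Sch(\delta^k_L,W)$ rather than a Schur restriction of a Laplacian. Following the appendix, I would read it as the Schur restriction of the up Laplacian $\Delta^{k,\mathrm{up}}_L$ to $W$. If that reading is wrong, the comparison step above needs to be redone for whatever operator the remark actually intends.
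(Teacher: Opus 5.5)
Your proof is correct and follows essentially the same route as the paper's: you block-decompose $\Delta^{k,\mathrm{up}}_L$ along $\epsilon(\rmC^k(K))\oplus\rmC^k(L,K)$, identify the partial minimum with the Schur complement (the paper cites Boyd--Vandenberghe, whereas you verify $\operatorname{im}\Delta_{\perp U}\subseteq\operatorname{im}\Delta_{\perp\perp}$ directly), and finish with \Cref{rmk:schur and laplacian} read as $\Sch(\Delta^{k,\mathrm{up}}_L,W)$, which is exactly how the paper's proof and appendix use it, so your reading is the intended one. Your explicit comparison of the minimum over $W^\perp$ with the minimum over $\rmC^k(L,K)$ fills in what the paper compresses into ``a similar argument,'' but note that $\epsilon(\rmC^k(K))\ominus W=\epsilon(\operatorname{im}\delta^{k-1}_K)$, so every $\delta^{k-2}$ in that step should read $\delta^{k-1}$; the extra term then vanishes because $\delta^k_L\delta^{k-1}_L=0$, and $\gamma'$ vanishes on $K$ because restriction commutes with the coboundary.
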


\begin{proof}
    Let $U:= \epsilon(\rmC^k(K))$ and $U^\perp$ be its orthogonal complement in $\rmC^k(L)$. 
    Then, any death potential $\hat{\beta}$ can be written as $\hat{\beta} = \epsilon(\beta) + \gamma$ with $\gamma \in U^\perp$.
    With respect to the decomposition $\rmC^k(L) = U \oplus U^\perp$ (and any choice of bases adapted to it), write the block matrix representation of $\Delta_L^{k,\mathrm{up}}$ as in \cref{eq:block matrix}.
    Then the objective function of the death cochain problem is the quadratic form of the positive semidefinite operator \(
    \Delta_L^{k,\mathrm{up}} = (\delta_L^k)^*\delta_L^k,
    \) satisfying
    \begin{align}
        \|\delta_L^k \hat{\beta}\|_2^2
        = &
        \langle \Delta_L^{k,\mathrm{up}}(\epsilon(\beta)+\gamma),
        \epsilon(\beta)+\gamma\rangle \notag \\
    = &\langle \Delta_{UU}\epsilon(\beta),\epsilon(\beta)\rangle
    + 2\langle \Delta_{U\perp}\gamma,\epsilon(\beta)\rangle
    + \langle \Delta_{\perp\perp}\gamma,\gamma\rangle.  \label{eq:explicit death potential}
    \end{align}
    We now invoke a standard quadratic minimization result
    \cite[Example~3.15]{boyd2004convex}:  
    if
    \(
    f(x,y)=x^T A x + 2x^T B y + y^T C y
    \)
    is positive semidefinite with $A$ and $C$ symmetric,
    then
    \(
    \inf_y f(x,y)
    =
    x^T(A-BC^\dagger \rmB^T)x.
    \)
    Applying this with
    \(x=\epsilon(\beta)\), \(y=\gamma\), \(A=\Delta_{UU}\), \(B=\Delta_{U\perp}\), \(C=\Delta_{\perp\perp},\) and the definition of Schur restriction, we obtain \cref{eq:death-schur}.

    If $\beta$ is harmonic on $K$, then \(
    \beta \in \ker(\delta_K^k)\cap \ker((\delta_K^{k-1})^*),
    \)
    implying that $\epsilon(\beta) \in W := \epsilon\big(\ker((\delta^{k-1}_K)^*)\big)
    $.
    Via a similar argument as above, by replacing $U$ with $W$, we obtain
    \[
        \|\delta_L^k \hat{\beta}\|_2^2
    =
    \left\langle\Sch\left(\Delta_L^{k,\mathrm{up}},W\right)\epsilon(\beta),
    \epsilon(\beta)
    \right\rangle.
    \]
    Finally, using the relationship between the Schur restriction and the persistent Laplacian from \Cref{rmk:schur and laplacian}, along with $\epsilon(\beta) \in W$ and $\iota_{W} = (\proj_{W})^*$, we obtain
    \begin{align*}
        \left\langle
        \Sch\big(\Delta^{k,\mathrm{up}}_{L}, W\big)
        \epsilon(\beta),
        \epsilon(\beta)
        \right\rangle
        &= \left\langle \Delta^{k,\mathrm{up}}_{L,K} (\epsilon(\beta)), \epsilon (\beta)\right\rangle.
    \end{align*}
    This completes the proof.
\end{proof}  

\begin{remark}\label{rmk:death-cochain-explicit}
An explicit expression for the death cochain follows from \eqref{eq:explicit death potential}. 
Writing $\hat{\beta} = \epsilon(\beta) + \gamma$ with $\gamma \in U^\perp$, an optimal choice of $\gamma$ is
\[
\hat \gamma := -\Delta_{\perp\perp}^+\Delta_{\perp U}\epsilon(\beta)
\quad \text{(up to } \ker(\Delta_{\perp\perp})\text{)},
\]
which yields a death potential $\hat{\beta}$. While $\hat{\beta}$ is not unique, the resulting death cochain is uniquely determined and given by
\[
\delta_L^k \hat{\beta}
=
\delta_L^k \bigl(I - \Delta_{\perp\perp}^+\Delta_{\perp U}\bigr)\epsilon(\beta).
\]
\end{remark}

    At present, we are not aware of any direct analogues of the results in this section and \Cref{sec:laplacelearning} for birth cochains. This asymmetry is not surprising in the light of the assymetry between \Cref{prop:birth-cochain} and \Cref{prop:death cochain}, namely that the birth cochain is the $\ell_2$-minimal representative in the \emph{preimage} of $[\alpha]_L$ under the map $q^\#$, whereas the death cochain is the $\ell_2$-minimal representative of the relative class obtained as the \emph{image} of $[\beta]_K$ under the connecting homomorphism $\lambda^{\#}$.

\section{Generalizing birth and death time}\label{sec:content}

\subsection{Birth and death content}

Birth and death cochains naturally arise in problems where one seeks to increase or decrease the persistence of selected homological features by modifying the underlying data. 
Motivated by this perspective, we introduce relaxations of birth and death times.

\begin{definition}\label{def:content}
Let $c$ be a persistent cohomology class with associated bar $[b, d)$ and let $\varepsilon > 0$.
    Let $\birthcochain $ and $\omega$ denote the $\varepsilon$-birth and $\varepsilon$-death cochains for $c$ respectively. The \textbf{$\varepsilon$-birth content}, $B_\varepsilon(c)$ is defined as
    \begin{equation}\label{eq:birthcontent}
       B_\varepsilon(c) := \sum_{\sigma \in X^k_{b+\varepsilon}} f(\sigma)\frac{|\birthcochain (\sigma)|}{||\birthcochain ||_1} 
    \end{equation}
    where $X^k_{b}$ is the $k$-skeleton of $X_b$. The \textbf{$\varepsilon$-death content} denoted by $D_\varepsilon(c)$ is defined as
    \begin{equation}\label{eq:deathcontent}
    D_\varepsilon(c) := \sum_{\sigma \in X^{k+1}_{d+\varepsilon}} f(\sigma)\frac{|\omega(\sigma)|}{||\omega||_1}
    \end{equation}
\end{definition}

Note that as long as $\varepsilon < (d-b)/2$, we always have $B_\varepsilon(c) \leq D_\varepsilon(c)$ since the filtration values in \Cref{eq:birthcontent} are all less than the filtration values in \Cref{eq:deathcontent} and the coefficients sum to $1$. In practice, we often use $\varepsilon = \varepsilon_0(d-b)$ where $0 < \varepsilon_0 < 1/2$ is a parameter chosen by the user, so that $\varepsilon$ depends on the bar length. Notice also that by \Cref{rem:invariance}, $B_\varepsilon(c)$ and $D_\varepsilon(c)$ are invariant under multiplication of $c$ by scalars, which is helpful since the persistent cohomology class for a bar is often defined only up to scalar multiplication. 

\newsiamremark{example}{Example}

\begin{example}
    Recall from \Cref{subsec:birth-cochain-0} that in degree $0$, the
    $\varepsilon$-birth cochain for a bar $[b, d)$ is the indicator function
    of all vertices which will eventually merge with the birth vertex $v$ before it dies. Therefore the $\varepsilon$-birth content in the notation of \Cref{subsec:birth-cochain-0} is
    $$
    B_\varepsilon
    =
    \sum_{u \in V^v_{d^-} \cap G_{b+\varepsilon}}
    f(u)\frac{|\birthcochain (u)|}{\|\birthcochain \|_1}
    =
    \frac{1}{|V^v_{d^-} \cap G_{b+\varepsilon}|}
    \sum_{u \in V^v_{d^-} \cap G_{b+\varepsilon}} f(u).
    $$
    or the average filtration value of vertices present at $b+\varepsilon$ which will merge with $v$ before the death time $d$.
\end{example}

When the filtration in question is a Vietoris-Rips complex, it can be inconvenient to use the filtration value of simplices of dimension higher than $1$. The filtration value of a higher dimensional simplex $\sigma$ is by definition the maximum filtration value of its edges. But the specific edge that achieves this maximum value is not stable; indeed, if all edges have almost the same filtration value, then a small perturbation can easily change the maximum edge. This leads to instability when trying to optimize. For this reason, we want to consider a further relaxation of our birth and death content.

\begin{definition}
    The \emph{edge-relaxed $\varepsilon$-birth content}, denoted $\tilde{B}_\varepsilon(c)$, is defined as in~\Cref{eq:birthcontent} but with $f(\sigma)$ replaced by 
    $$
    \tilde{f}(\sigma) = \mathsf{mean} \left\{f(\tau) \;\middle|\; \tau \subseteq \sigma,\ \tau \in X^1_{b+\varepsilon} \setminus X^1_{b-\varepsilon} \right\} 
    $$
    Similarly, the \emph{edge-relaxed $\varepsilon$-death content}, denoted $\tilde{D}_\varepsilon(c)$, is defined as in~\Cref{eq:deathcontent} but with $f(\sigma)$ replaced by
    $$
    \tilde{f}(\sigma) = \mathsf{mean} \left\{f(\tau) \;\middle|\; \tau \subseteq \sigma,\ \tau \in X^1_{d+\varepsilon} \setminus X^1_{d-\varepsilon} \right\} 
    $$
\end{definition}

We define the \emph{(edge-relaxed) $\varepsilon$-persistence content} of $c$ to be the (edge-relaxed) $\varepsilon$-death content minus the (edge-relaxed) $\varepsilon$-birth content.

\begin{proposition}\label{prop:epsbounds}
    For any persistent cohomology class $c$ with associated bar $[b,d)$, we have
    $$
    D_\varepsilon(c) - B_\varepsilon(c) - \varepsilon \leq d-b \leq D_\varepsilon(c) - B_\varepsilon(c) + \varepsilon
    $$
    and the same for the edge-relaxed versions. In particular, as $\varepsilon \to 0$, persistence content converges to persistence.
\end{proposition}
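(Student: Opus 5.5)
The plan is to bound each content separately by a window around the corresponding endpoint of the bar, and then subtract. The key observation is that $B_\varepsilon(c)$ and $D_\varepsilon(c)$ are convex combinations of filtration values. The weights $|\birthcochain(\sigma)|/\|\birthcochain\|_1$ are nonnegative and sum to $1$, and similarly for $\omega$. So it suffices to locate the supports of $\birthcochain$ and $\omega$ in filtration-value windows. I will aim to show $B_\varepsilon(c)\in[b,b+\varepsilon]$ and $D_\varepsilon(c)\in[d,d+\varepsilon]$. Subtracting then gives $D_\varepsilon(c)-B_\varepsilon(c)\in[d-b-\varepsilon,\,d-b+\varepsilon]$, which is exactly the statement.

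\textbf{Step 1 (coarse support).} By \Cref{def:birth-cochain}, $\birthcochain$ vanishes on $X_{b-\varepsilon}$ and lives on $X_{b+\varepsilon}$. Hence every $\sigma$ in its support has $f(\sigma)\in(b-\varepsilon,b+\varepsilon]$. By \Cref{prop:death cochain}, $\omega\in\rmZ^{k+1}(X_{d+\varepsilon},X_{d-\varepsilon})$. Hence every $\sigma$ in its support has $f(\sigma)\in(d-\varepsilon,d+\varepsilon]$.

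\textbf{Step 2 (sharpening to one-sided windows).} This is the main obstacle. I would show that the support of $\birthcochain$ actually avoids simplices with $f(\sigma)<b$, and likewise that $\omega$ avoids those with $f(\sigma)<d$.

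For birth, the class $c_{b+\varepsilon}$ is not alive at any $t<b$. So its restriction to $X_{b'}$, where $b'$ is the largest filtration value below $b$, is already zero in cohomology. The idea is to compare the minimizer over representatives vanishing on $X_{b-\varepsilon}$ with the feasible set of representatives vanishing on $X_{b'}$. Concretely, I would try to show that the orthogonal projection of $\birthcochain$ onto cochains supported on $X_{b'}\setminus X_{b-\varepsilon}$ can be removed by subtracting a coboundary without increasing the norm. To do this I would use the first-order optimality condition: $\birthcochain$ is orthogonal to $\delta(\rmC^{k-1}(X_{b+\varepsilon},X_{b-\varepsilon}))$.

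For death, the analogous argument uses \Cref{prop:death-cochain-laplace-learning}. The class $c_{d-\varepsilon}$ still extends to a cocycle on $X_{d'}$, where $d'<d$ is the last filtration value before $d$. So the death potential can be chosen to be a cocycle on $X_{d'}$, which forces $\omega=\delta\hat\beta$ to vanish there. I would first verify this via the relative description of \Cref{prop:death cochain}, then attempt the birth case by the dual projection argument.

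\textbf{Step 3 (fallback and edge-relaxed versions).} If Step 2 fails for birth in general, Step 1 alone still yields a $2\varepsilon$ bound. In that case I would check whether the intended statement uses the convention that filtration values lie on a grid finer than $\varepsilon$. For the edge-relaxed contents, $\tilde f(\sigma)$ is a mean of $f(\tau)$ over edges $\tau$ in $X^1_{b+\varepsilon}\setminus X^1_{b-\varepsilon}$, respectively $X^1_{d+\varepsilon}\setminus X^1_{d-\varepsilon}$. These edges lie in the same windows, so the same convex-combination argument applies verbatim. Letting $\varepsilon\to0$ then gives convergence of persistence content to $d-b$.
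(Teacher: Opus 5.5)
\paragraph{Step 2 fails, and the stated bound is false.} Your Step~1 together with the convex-combination remark is exactly the paper's proof. As you suspected, it only places $B_\varepsilon(c)$ in $(b-\varepsilon,b+\varepsilon]$ and $D_\varepsilon(c)$ in $(d-\varepsilon,d+\varepsilon]$. That gives $\bigl|(D_\varepsilon(c)-B_\varepsilon(c))-(d-b)\bigr|\le 2\varepsilon$, not $\varepsilon$. The paper's one-line argument has the same slippage.

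Step~2, which is meant to recover $\varepsilon$, is false in both halves.
\begin{itemize}
\item \emph{Birth.} Let $u,v,w$ enter at $0$, edges $uv,vw$ at $b-\varepsilon/2$, edge $uw$ at $b$, and the triangle much later. Then $X_{b-\varepsilon}$ has no edges, so the birth cochain is the harmonic representative: $\pm\tfrac13$ on all three edges, giving $B_\varepsilon(c)=b-\varepsilon/3<b$. Your projection idea cannot work. The condition $\eta\perp\delta\,\rmC^{k-1}(X_{b+\varepsilon},X_{b-\varepsilon})$ characterizes the minimizer over the larger feasible set, and that minimizer need not lie in the smaller set of representatives vanishing on $X_{b'}$. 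Here, forcing it there raises the norm from $1/\sqrt3$ to $1$.
\item \emph{Death.} A cocycle extension of $c_{d-\varepsilon}$ to $X_{d'}$ only exhibits \emph{one} feasible potential with $\delta\hat\beta|_{X_{d'}}=0$. The actual minimizer solves the Dirichlet problem of \Cref{prop:death-cochain-laplace-learning}, and that solution spreads $\omega$ onto simplices of $X_{d'}\setminus X_{d-\varepsilon}$.
\end{itemize}

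\paragraph{A counterexample in degree $0$.} No argument can close this gap, because the $\pm\varepsilon$ inequality is false already in degree $0$. Take a graph with:
\begin{itemize}
\item vertices $x$ at $0$ and $y$ at $10$;
\item leaves $z_1,z_2$ joined to $y$, with vertices and edges at $10.9$;
\item a path $y,q_1,q_2$, with vertices and edges at $19.1$;
\item an edge $q_2x$ at $20$.
\end{itemize}
Let $\varepsilon=1$, which is generic. The bar of $y$ is $[10,20)$.

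The $\varepsilon$-birth cochain is the indicator of $\{y,z_1,z_2\}$, so $B_\varepsilon=(10+2\cdot 10.9)/3=10.6$. The death potential is $1,\tfrac23,\tfrac13,0$ along $y,q_1,q_2,x$. Hence $\omega=\pm\tfrac13$ on the three path edges, two of which have value $19.1<d$, and $D_\varepsilon=(2\cdot19.1+20)/3=19.4$. Thus $D_\varepsilon-B_\varepsilon+\varepsilon=9.8<10=d-b$, violating the right-hand inequality.

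\paragraph{What your proposal does establish.} The true statement is the $2\varepsilon$ version, and your Step~1 proves it. It still gives convergence of persistence content to persistence as $\varepsilon\to0$. It also covers the edge-relaxed contents, since their $\tilde f$-values are averages of filtration values lying in the same two-sided windows. For the same reason, a one-sided window would not have transferred to $\tilde D_\varepsilon$ even if Step~2 had held. Your grid idea in Step~3 amounts to an extra hypothesis: no filtration values other than $b$ and $d$ in the windows. Under it, $B_\varepsilon=b$ and $D_\varepsilon=d$ trivially, but that hypothesis is not part of the statement.
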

\begin{proof}
    The coefficients in \Cref{eq:birthcontent} sum to $1$ by design, and the filtration values for simplices with non-zero coefficients are within $\varepsilon$ of $b$ and $d$ respectively.
\end{proof}

The choice of $\varepsilon$ has a significant effect on birth and death cochains, and thus also on birth and death content. As we will see in \Cref{sec:pointclouds}, there are trade-offs between small and large $\varepsilon$ values. For this reason, we propose \emph{multi-cochain} versions of birth and death content by defining, for a finite subset $\mathcal{E} \subseteq (0,\infty)$,
\begin{equation}\label{eq:multicochains}
    \mathcal{B}_{\mathcal{E}}(X) = \frac{1}{|\mathcal{E}|} \sum_{\varepsilon \in \mathcal{E}} \mathcal{B}_{\varepsilon}(X), \quad \mathcal{D}_{\mathcal{E}}(X) = \frac{1}{|\mathcal{E}|} \sum_{\varepsilon \in \mathcal{E}} \mathcal{D}_{\varepsilon}(X)
\end{equation}
together with their edge-relaxed versions. Note that since we are taking an average, \Cref{prop:epsbounds} still applies.

So far we have only defined the $\varepsilon$-birth and $\varepsilon$-death content for a given bar $[b, d)$. When doing topological optimization on real data, it is common to define a way to choose a bar(s) at each step of the optimization process, aligned with the overall objective. For example, one could focus on the longest bar to promote a feature, or the shortest bar to suppress noise. In each of our experiments below, we specify in advance how to choose the bar to optimize.

\begin{remark}[Gradients for birth and death content]
    Looking at \Cref{eq:birthcontent} and \Cref{eq:deathcontent}, we note that if the birth and death cochains ($\birthcochain $ and $\omega$ respectively) are treated as fixed, then birth and death content is a linear combination of filtration values. We can therefore easily determine the gradient with respect to a filtration value $f(\sigma)$. Typically, these filtration values are themselves differentiable functions of some other features (e.g.~distances between points), and so we can differentiate birth and death content with respect to those features for fixed birth and death cochains. Using these derivatives allows us to do gradient descent in a variety of settings, some of which we demonstrate in later sections. For birth and death content to be truly differentiable with respect to feature values, we require that sufficiently small changes in feature values do not change the simplicial complex at crucial thresholds, namely: $b-\varepsilon$, $b+\varepsilon$, $d-\varepsilon$ and $d+\varepsilon$. We expect this to be the case in many practical settings. For example, for Vietoris-Rips on point clouds, we need that all non-zero distances are distinct and that none of them are precisely equal to $b-\varepsilon$, $b+\varepsilon$, $d-\varepsilon$ or $d+\varepsilon$. 
\end{remark}

\subsection{Critical points of birth and death content for point clouds}

To illustrate the theoretical advantages of birth and death cochains, we will find a natural class of critical points for these functions on point clouds. Our focus is on the $\varepsilon$-birth and $\varepsilon$-death content of the longest bar in Vietoris-Rips degree-$1$ persistent cohomology, for which the critical points will be regular polygons. The main idea behind the results in this section is that in a regular polygon points are geometrically indistinguishable, from which it should follow that the gradient of birth and death content is highly symmetric and can be controlled by a penalty function with similar symmetries. \Cref{app:criticalproofs} contains some technical lemmas about degree-$1$ persistent cohomology for regular polygons, which we will need for the main theorem in this section. 

Note that scaling a point cloud $X$ also scales its persistence content. Thus, in order to state a stationarity result, we need to apply either some kind of regularization term or constraint. \Cref{thm:criticalmoment} covers the first case, and we make a remark on the second thereafter. Given an $\varepsilon$ value used to calculate the birth and death content for a bar $[b,d)$ associated to a filtration function $f$, we say that $\varepsilon$ is \emph{generic} if $\{b-\varepsilon, b+\varepsilon, d-\varepsilon, d+\varepsilon\} \cap \mathsf{Im}f = \varnothing$. That is, the filtration values involved in calculating the birth and death cochains are not critical values for the topology of the filtered simplicial complex. Note that in the case of a VR-complex, $\mathsf{Im}f$ is precisely the set of distances between points.

\begin{theorem}\label{thm:criticalmoment}

    Fix $\varepsilon_0 > 0$, and write $\mathbb{R}^{n \times 2}$ as the set of (ordered) $n$-tuples of points $(\mathbf{x}_1, \ldots, \mathbf{x}_n)$ in $\mathbb{R}^2$. For any $X \in \mathbb{R}^{n \times 2}$, let $\mathcal{B}_\varepsilon(X)$ and $\tilde{\mathcal{D}}_\varepsilon(X)$ be the $\varepsilon$-birth content and the edge-relaxed $\varepsilon$-death content of the most persistent degree-$1$ bar $[b, d)$ in the VR persistence diagram of $X$ (or zero if no such bar exists), where $\varepsilon := \varepsilon_0(d-b)$. Let $\mathcal{P}: \mathbb{R}^{n \times 2} \to [0, \infty)$ be any differentiable penalty function which is invariant under linear isometries of $\mathbb{R}^2$ and such that for any $X \in \mathbb{R}^{n \times 2}$ which is not all zero,
    $$
    \lim_{t_0 \to \infty} \frac{d}{dt} \mathcal{P}(t\cdot t_0 \cdot X) = \infty, \ \lim_{t_0 \to 0} \frac{d}{dt} \mathcal{P}(t\cdot t_0 \cdot X) = 0
    $$
    Consider the loss function $\mathcal{L}: \mathbb{R}^{n \times 2} \to \mathbb{R}$ given by
    $$
    \mathcal{L}(X) = - \left(\tilde{\mathcal{D}}_\varepsilon(X) -\mathcal{B}_\varepsilon (X) \right) + \mathcal{P}(X)
    $$
    Then there is a regular polygon centered at the origin whose vertices $\hat{X}$ are a critical point of $\mathcal{L}$, assuming that $\varepsilon$ is generic at $\hat{X}$.
\end{theorem}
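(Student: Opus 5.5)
Let $\hat X_r$ denote the regular $n$-gon of circumradius $r$ centered at the origin. We show that for a suitable $r$ the gradient of $\mathcal{L}$ at $\hat X_r$ vanishes. The argument has two parts. First, symmetry forces the gradient of each term of $\mathcal{L}$ to point radially with equal magnitude at every vertex. Second, a one-dimensional intermediate value argument in $r$ balances the radial components.

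\textbf{Step 1 (local linearity).} Since $\varepsilon$ is generic at $\hat X_r$, the complexes $X_{b\pm\varepsilon}$ and $X_{d\pm\varepsilon}$ stay unchanged under small perturbations. We use the technical lemmas of \Cref{app:criticalproofs} to ensure that the most persistent degree-$1$ bar remains the same bar nearby. We may need to check also that its persistent class varies continuously, so that the cochains are locally constant in the relevant sense. Granting this, \Cref{rem:uniqueness} makes $\eta$ and $\omega$ uniquely determined by the combinatorial data and the class. So near $\hat X_r$, $\mathcal{B}_\varepsilon$ and $\tilde{\mathcal{D}}_\varepsilon$ are fixed linear combinations of pairwise distances $\|\mathbf{x}_i-\mathbf{x}_j\|$, which are smooth since the points are distinct. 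The factor $\varepsilon=\varepsilon_0(d-b)$ also moves with $X$, but genericity means the complexes at the thresholds do not change, so this causes no difficulty. We can therefore differentiate with the cochains held fixed, as in the gradient remark.

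\textbf{Step 2 (symmetry).} Let $\rho$ be the rotation by $2\pi/n$ composed with the cyclic relabeling $\mathbf{x}_i\mapsto\mathbf{x}_{i+1}$, and let $\tau$ be a reflection composed with the corresponding relabeling. Both fix $\hat X_r$ and induce simplicial automorphisms of the VR filtration. The top degree-$1$ class of $\hat X_r$ is one-dimensional, by the appendix lemmas; we expect the top bar to be unique there. Hence these automorphisms map the class to $\pm$ itself. By \Cref{rem:invariance} and uniqueness, $|\eta|$ and $|\omega|$ are invariant under the dihedral group $D_n$. Consequently the functions $\mathcal{B}_\varepsilon$, $\tilde{\mathcal{D}}_\varepsilon$ and $\mathcal{P}$ (the last by hypothesis) are all $D_n$-invariant near $\hat X_r$, invariance being understood for the locally linear expressions from Step 1. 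The gradient $g\in\mathbb{R}^{n\times 2}$ of each such function at the fixed point $\hat X_r$ is therefore equivariant. This means $g_{i+1}=R g_i$, and $g_i$ is fixed by the reflection through $\mathbf{x}_i$. So $g_i=\lambda\,\mathbf{x}_i/r$ for one scalar $\lambda$: the gradient is a multiple of the radial scaling field.

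\textbf{Step 3 (balancing).} Because the gradient is radial, it suffices to make the directional derivative of $\mathcal{L}$ along $X\mapsto tX$ vanish at $t=1$. The content term $\tilde{\mathcal{D}}_\varepsilon-\mathcal{B}_\varepsilon$ is $1$-homogeneous in $X$, since distances scale and $\varepsilon$ scales with $d-b$. Hence
$$\frac{d}{dt}\Big|_{t=1}\bigl(\tilde{\mathcal{D}}_\varepsilon-\mathcal{B}_\varepsilon\bigr)(t\hat X_r)=c,$$
a constant independent of $r$. This constant equals the content of $\hat X_1$, which is positive by \Cref{prop:epsbounds} for $\varepsilon_0$ small; in general we need $c>0$ from the definition. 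The function
$$h(t_0)=\frac{d}{dt}\Big|_{t=1}\mathcal{P}(t\,t_0\hat X_1)$$
tends to $\infty$ as $t_0\to\infty$ and to $0$ as $t_0\to0$. It is continuous if $\mathcal{P}$ is $C^1$; otherwise we would use a Darboux-type argument. By the intermediate value theorem there is an $r$ with $h(r)=c$. Then $\hat X_r$ is critical, provided $\varepsilon$ is generic there; genericity is scale-invariant, so it is the assumption in the statement.

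\textbf{Main obstacle.} The delicate point is Step 2. We must justify that the cochains are invariant up to sign, and that the relevant bar is unique and locally stable so that "most persistent bar" does not jump. The tie structure of regular polygons, with many equal distances, makes this rely heavily on the appendix lemmas. The same applies to the meaning of the gradient when simplices share filtration values. The edge-relaxed death content is used precisely so that the maximum over the edges of a triangle is replaced by a mean, keeping things differentiable.
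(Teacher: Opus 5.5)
\paragraph{Verdict} Your Steps 1 and 2 follow the paper's proof closely, but Step 3 contains an error that breaks the conclusion.

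In Steps 1 and 2, genericity freezes the complexes at $b\pm\varepsilon$ and $d\pm\varepsilon$, so the cochains are locally constant. Combining the dihedral isometry with the matching relabeling (the paper's $\psi(g)\phi(g^{-1})$) fixes $\hat X_r$ and forces each gradient to be radial with equal norms. Using Euler's identity to get the outward sign is a fine substitute for the paper's ``$\mathcal{T}$ increases under scaling''.

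\paragraph{The error in Step 3} Write $\mathcal{T}=\tilde{\mathcal{D}}_\varepsilon-\mathcal{B}_\varepsilon$ and $c=\mathcal{T}(\hat X_1)$. Since $\mathcal{T}$ is $1$-homogeneous, $\frac{d}{dt}\big|_{t=1}\mathcal{T}(t\hat X_r)=\mathcal{T}(\hat X_r)=rc$. This is \emph{not} independent of $r$. What is independent of $r$ is the gradient itself: differentiating $\mathcal{T}(sX)=s\,\mathcal{T}(X)$ in $X$ gives $\nabla\mathcal{T}(sX)=\nabla\mathcal{T}(X)$. This is precisely the fact the paper uses (``$\nabla\mathcal{T}(\hat X)$ remains constant''). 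Along your unnormalized scaling field the correct balance is therefore $h(r)=rc$, not $h(r)=c$. At the $r$ you produce, $\langle\nabla\mathcal{L}(\hat X_r),\hat X_r\rangle=h(r)-rc=(1-r)c$, which vanishes only if $r=1$.

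\paragraph{Why the intermediate value theorem on $h(r)=rc$ does not rescue it} Set $p(s)=\mathcal{P}(s\hat X_1)$, so your $h$ is $h(t_0)=t_0\,p'(t_0)$. By Step 2, Euler's identity and $\|\hat X_1\|^2=n$, you have $\nabla\mathcal{T}(\hat X_r)=\frac{c}{n}\hat X_1$ and $\nabla\mathcal{P}(\hat X_r)=\frac{p'(r)}{n}\hat X_1$. So criticality means exactly $p'(r)=c$.

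Limits of $t_0p'(t_0)$ do not force this. The penalty $\mathcal{P}(X)=\kappa\sqrt{1+\|X\|^2}$ satisfies $h\to 0$ and $h\to\infty$, yet $p'(r)<\kappa\sqrt{n}$ for all $r$. Hence for $\kappa\sqrt{n}\le c$ no regular polygon is critical.

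\paragraph{What the argument needs} It needs the hypothesis read as a statement about $p'(t_0)=\frac{d}{dt}\mathcal{P}(t\hat X_1)\big|_{t=t_0}$, namely $p'(t_0)\to 0$ as $t_0\to 0$ and $p'(t_0)\to\infty$ as $t_0\to\infty$. This is what the paper invokes when it says the condition on $\mathcal{P}$ lets the norm of $\nabla\mathcal{P}$ take any value. Under this reading $p'(r)=c$ is solvable. Because $p'$ is a derivative, Darboux's theorem supplies the intermediate value property, so your $C^1$ worry disappears.

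\paragraph{A smaller point} The paper also leaves this implicit. Invariance of $\mathcal{P}$ under your combined rotation-plus-relabeling action requires $\mathcal{P}$ to be invariant under permutations of the points. Invariance under isometries of $\mathbb{R}^2$ alone does not provide that.
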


\begin{proof}
Fix for now a regular polygon $\hat{X} \in \mathbb{R}^{n \times 2}$ centered at the origin. This choice gives an action $\rho$ of the dihedral group $D_n$ (the group of symmetries of a regular polygon) on $\mathbb{R}^{n \times 2}$ by linear isometries. We will need to consider two different actions of $D_n$ on $\mathbb{R}^{n \times 2}$: 
\begin{itemize}
    \item for $g \in D_n$, let $\pi_g$ be the permutation on $\{1,\ldots, n\}$ induced by $g$, and define 
    $$\phi(g)(\mathbf{x}_1, \ldots, \mathbf{x}_n) = (\mathbf{x}_{\pi_g^{-1}(1)}, \ldots, \mathbf{x}_{\pi_g^{-1}(n)})$$
    \item for $g \in D_n$, $\psi(g)$ is the automorphism of $\mathbb{R}^{n \times 2}$ given by applying $\rho(g)$ to each entry.
\end{itemize}
Note that in particular, $\phi(g)(\hat{X}) = \psi(g)(\hat{X})$ for any $g$. For any $g \in D_n$, $\phi(g)$ and $\psi(g)$ both give simplicial maps from the VR complex of $\hat{X}$ to itself and so it makes sense to ask if they preserve certain cochains. 
By \Cref{lem:symmetry}, 
birth and death cochains involved in  $\mathcal{B}_\varepsilon(\hat{X})$ and $\tilde{\mathcal{D}}_\varepsilon(\hat{X})$ are preserved up to sign. We can ask the same question of point clouds sufficiently close to $\hat{X}$ in $\mathbb{R}^{n \times 2}$, but only for $\phi(g)$ and not for $\psi(g)$ since the latter need not fix the point cloud set-wise. When $\varepsilon$ is generic, the simplicial complexes for nearby clouds at $b \pm \varepsilon$ and $d \pm \varepsilon$ are the same as for $\hat{X}$, so birth and death cochains are still preserved up to sign by $\phi(g)$. In particular, the coefficients in \Cref{eq:birthcontent} and \Cref{eq:deathcontent} of simplicies in the same $D_n$ orbit are the same. It follows that $\tilde{\mathcal{D}}_\varepsilon$ and $\mathcal{B}_\varepsilon$ are both invariant under the action $\phi$ for point clouds near $\hat{X}$. Observe that for all point clouds, $\tilde{\mathcal{D}}_\varepsilon$ and $\mathcal{B}_\varepsilon$ are invariant under the action $\psi$ simply because the birth and death content are preserved by isometries of the point cloud.

We have that $\mathcal{B}_\varepsilon$ and $\tilde{\mathcal{D}}_\varepsilon$ are differentiable at $\hat{X}$ because Euclidean distances are differentiable away from $0$, and the other factors are constant near $\hat{X}$. Let $\mathcal{T}(X) = \tilde{\mathcal{D}}_\varepsilon(X) -\mathcal{B}_\varepsilon (X)$, and note that due to $\mathcal{T}$ being invariant under both actions of $D_n$, $\mathcal{T} \circ \psi(g)\circ \phi(g^{-1}) = \mathcal{T}$ for all $g \in D_n$. A standard argument shows that the gradient of $\mathcal{T}$ must satisfy the equivariance
$$
\nabla \mathcal{T}(\psi(g) \phi(g^{-1})( X)) =  \psi(g) \phi(g^{-1}) ( \nabla \mathcal{T}(X))
$$
so that in particular, $\nabla \mathcal{T}(\hat{X}) = \psi(g) \phi(g^{-1}) ( \nabla \mathcal{T}(\hat{X}))$ for all $g \in D_n$.

For each vertex $\mathbf{x}_i$ of our regular polygon, we can pick a $g$ such that $\psi(g)$ reflects each point around the line from the origin through $\mathbf{x}_i$ and $\phi(g)$ fixes the $i^{th}$ entry in any tuple. Then the partial gradient $\mathbf{v}_i = \frac{\partial}{\partial \mathbf{x}_i} \mathcal{T}(\hat{X}) \in \mathbb{R}^2$ must be fixed by $\phi(g)$, and thus points in the direction of $\mathbf{x}_i$ (or in the opposite direction). Notice that $\mathcal{T}(\hat{X})$ strictly increases if we scale $\hat{X}$ by a scalar greater than $1$; indeed, we have already argued that the birth and death cochains remain the same close to $\hat{X}$, and the filtration values only increase. It follows that $\mathbf{v}_i$ must point from the origin towards $\mathbf{x}_i$ since this is a direction of increase in $\mathcal{T}$. Since $\psi$ acts by isometries and $\phi$ permutes the entries of $\hat{X}$ transitively, all the $\mathbf{v}_i$ have the same norm. We have thus characterized the gradient $\nabla \mathcal{T}(\hat{X})$ as $\gamma \hat{X} \in \mathbb{R}^{n \times 2}$ for some $\gamma > 0$.

The penalty term is invariant under all linear isometries, so by a similar argument, the gradient of $\mathcal{P}$ at $\hat{X}$ must have the form $\gamma' \hat{X}$. If we scale $\hat{X}$ by some $\lambda > 0$, $\varepsilon$ scales with $\lambda$, as do the distances between points, so the birth and death cochains are unaffected. The birth and death content scale by $\lambda$, but $\nabla \mathcal{T} (\hat{X})$ remains constant (since the gradients for the Euclidean distances remains unchanged). The condition on $\mathcal{P}$ in the statement of theorem guarantees that the norm of the gradient of $\mathcal{P}$ can be any real number, so that for some scaling $\lambda$, the gradient of $\mathcal{L} = -\mathcal{T} + \mathcal{P}$ must equal zero. When it does, we have found the required regular polygon. 
\end{proof}

We note that one can also replace the penalty term $\mathcal{P}$ by the constraint $
\frac{1}{n} \sum_{i} ||\mathbf{x}_i||^2 = 1 
$. The proof is mostly the same, one only need note that the gradient is normal to the feasible region at $\hat{X}$.

\section{(In)stability}\label{sec:instability}

One of the most prominent advantages of persistent (co)homology is that it is stable in a certain precise sense. Given two functions $f, g$ on a simplicial complex $X$, the bottleneck distance between the barcodes of the sublevel-set filtrations they induce is bounded above by $||f-g||_\infty$~\cite{cohen2005stability}. In particular, this implies that if $||f-g||_\infty < \delta$, then for every bar $[b,d)$ in the barcode for $f$, $[b,d)$ is short (length less than $2\delta$) or there is a bar $[b', d')$ in the barcode for $g$ which is $\delta$-close, i.e.~$\max (|b'-b| , |d'-d|) < \delta$. 
However, there need be no relation between the locations of the corresponding features, which may lie in entirely different parts of the dataset. 
For this reason, any topological optimization method which optimizes a function of the persistence barcode by modifying the underlying data is bound to suffer from instability in the worst case, since small changes in the data can significantly alter the gradient. This is true of optimization with birth and death simplices, and with birth and death cochains.

We nonetheless make a heuristic argument for why birth and death cochains provide more empirical stability in optimization tasks than birth and death simplices, as demonstrated for example in the tail-end convergence in \Cref{fig:tenpointruns} and the multiple trials in \Cref{fig:MNIST0}. Let $f$ be a function on a simplicial complex $X$, and consider a feature born at $b = f(\sigma)$. We now make some simple observations; in each case we only have to note that the simplicial complex does not change at the relevant values: $b$ for simplices, and $b \pm \varepsilon$ for cochains.
\begin{itemize}
    \item If $f$ is changed to $f'$ and for every simplex $\tau$ we have
    $f(\tau) < b \iff f'(\tau) < b$, then the simplex $\sigma$ still gives birth to a persistent cohomology class at time $b$.
    \item If $f$ is changed to $f'$ and for every simplex $\tau$ we have
    $f(\tau) < b-\varepsilon \iff f'(\tau) < b-\varepsilon$
    and
    $f(\tau) < b+\varepsilon \iff f'(\tau) < b+\varepsilon$,
    then a persistent cohomology class is still born between $b-\varepsilon$ and $b+\varepsilon$ and has the same birth cochain.
\end{itemize}

These observations make it clear that the stability of the birth simplex depends on the distance between $b$ and the filtration values, while the stability of the birth cochain depends on the distance betwen $b \pm \varepsilon$ and the filtration values. Similar observations hold for death simplices and death cochains. In many topological optimization tasks, filtration values cluster together over the runtime of the algorithm. For example, point clouds start to resemble regular polygons (see \Cref{thm:criticalmoment}), and images have fewer local maxima and minima. When this happens, the space between $b$ and other filtration values is often decreased, so that the stability of the birth simplex deteriorates. On the other hand, the space between $b \pm \varepsilon$ and the filtration values is often increased, bolstering birth cochain stability. The most extreme example of this is when $f$ is not injective (such as for a true regular polygon), in which case the birth simplex may not even be well-defined, while the birth cochain (for generic $\varepsilon$) is. 

To demonstrate this effect, we track the pairwise distances (which are the filtration values for Vietoris-Rips) and the important values $b$, $b \pm \varepsilon$, $d$ and $d \pm \varepsilon$ during the run in \Cref{fig:smallcochains}, with the results shown in \Cref{fig:heuristic}. Notice how as the values converge to the optimum, they cluster around $b$ and $d$, destabilizing birth and death simplices, but move steadily away from $b \pm \varepsilon$ and $d \pm \varepsilon$, stabilizing the birth and death cochains.

\begin{figure}[h]
    \centering
    \includegraphics[width=0.8\linewidth]{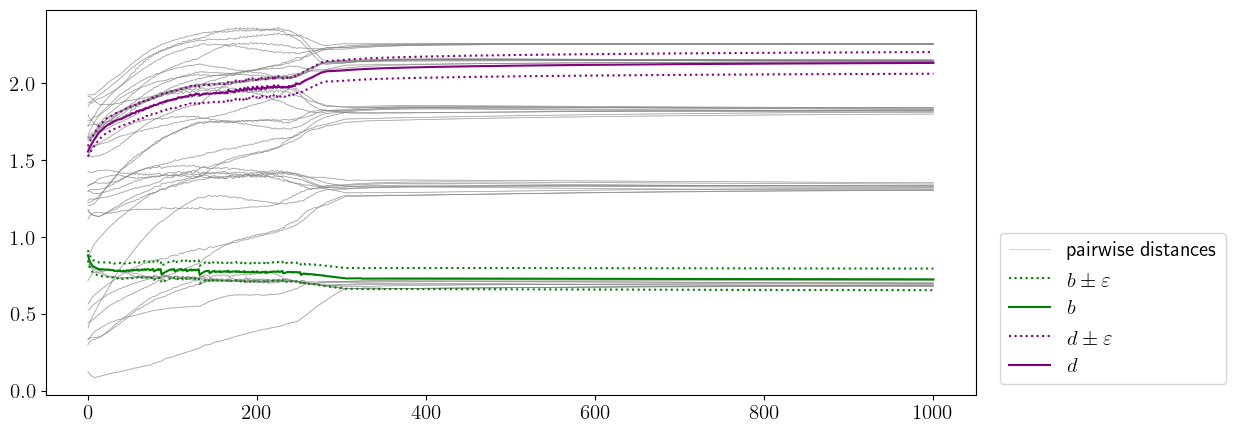}
    \caption{Tracking $b$, $b \pm \varepsilon$, $d$ and $d \pm \varepsilon$ for the run in \Cref{fig:smallcochains}, where $\varepsilon$ is defined as a relative value via $0.05(d-b)$ for this run. }
    \label{fig:heuristic}
\end{figure}

\section{Point cloud optimization}\label{sec:pointclouds}

In this section, we consider the model problem of increasing the persistence of an degree-$1$ bar in the Vietoris-Rips persistent homology of a point cloud. Specifically, for a point cloud $X = (\mathbf{x}_1, \mathbf{x}_2, \ldots, \mathbf{x}_n)$ of points in $\mathbb{R}^2$, let $[b, d)$ be the longest bar in the degree-$1$ persistence barcode for $X$. We will assume throughout that there is a unique longest bar, so that we have a well-defined topological feature to target at each step. To stop the points going off to infinity, we use a penalty term 
$$\mathcal{P}(X) = \sum_i d(\mathbf{x}_i, B)^2$$
where $B$ is the unit ball. Our penalty term is modeled on the one used in \cite{carriere2021optimizing} for point clouds. We will compare two different approaches, one involving optimizing by identifying and adjusting birth and death simplices alone, and one using cochains. In general, we will set $\varepsilon$ to be a fixed fraction of bar length. We now describe our experiments in more detail.

\subsection{Single point cloud experiment}

In our first experiment, we randomly generate a point cloud by sampling ten points from the unit circle and adding Gaussian noise with standard deviation $0.1$. We consider two different methods:
\begin{itemize}
    \item \emph{Cochain method.} Maximize $\mathcal{C}_{\varepsilon_0}(X) - \mathcal{P}(X)$ where $\mathcal{C}_{\varepsilon_0}(X) := \tilde{D}_\varepsilon(\alpha) - \mathcal{B}_\varepsilon(\alpha)$ is the edge-relaxed $\varepsilon$-persistence content of the highest persistence degree-$1$ bar and $\varepsilon = \varepsilon_0(d-b)$. 
    \item \emph{Simplices method.} Maximize $(d - b) - \mathcal{P}(X)$ directly by identifying birth and death simplices at each iteration. 
\end{itemize}
We use gradient ascent for each method with learning rate $\gamma = 0.02$ for $1,000$ iterations. \Cref{fig:smallcochains} shows snapshots of the cochains method for $\varepsilon_0 = 0.05$. We test a range of values $\varepsilon_0 \in \{0.03, 0.04, 0.05, 0.06\}$ for the cochains method. Since the objective functions for each method are different and thus may end up with final point clouds at different scales, we do not want to compare the persistence of the final clouds directly. Instead, we normalize the persistence by computing $(d-b)/||X||_2$ for the point cloud $X$ at each iteration.

\Cref{fig:tenpointruns} shows the results of each method. Additional runs for other learning rates can be found in Supplemental Material; while different learning rates do have an effect, $\gamma = 0.02$ represents the overall trend. We see that the behavior of the cochains method depends strongly on the choice of $\varepsilon_0$. For low $\varepsilon_0$, the method behaves like the simplices method, converging poorly to a irregular final configuration. For intermediate values, the method converges well to a regular $10$-gon with high persistence. Finally, high values of $\varepsilon_0$ result in the cochains method squashing nearby points together to create a (slightly irregular) $9$-gon.  

\begin{figure}
    \centering
    \begin{tikzpicture}
        \node at (-0.5,0) {\includegraphics[width=10cm]{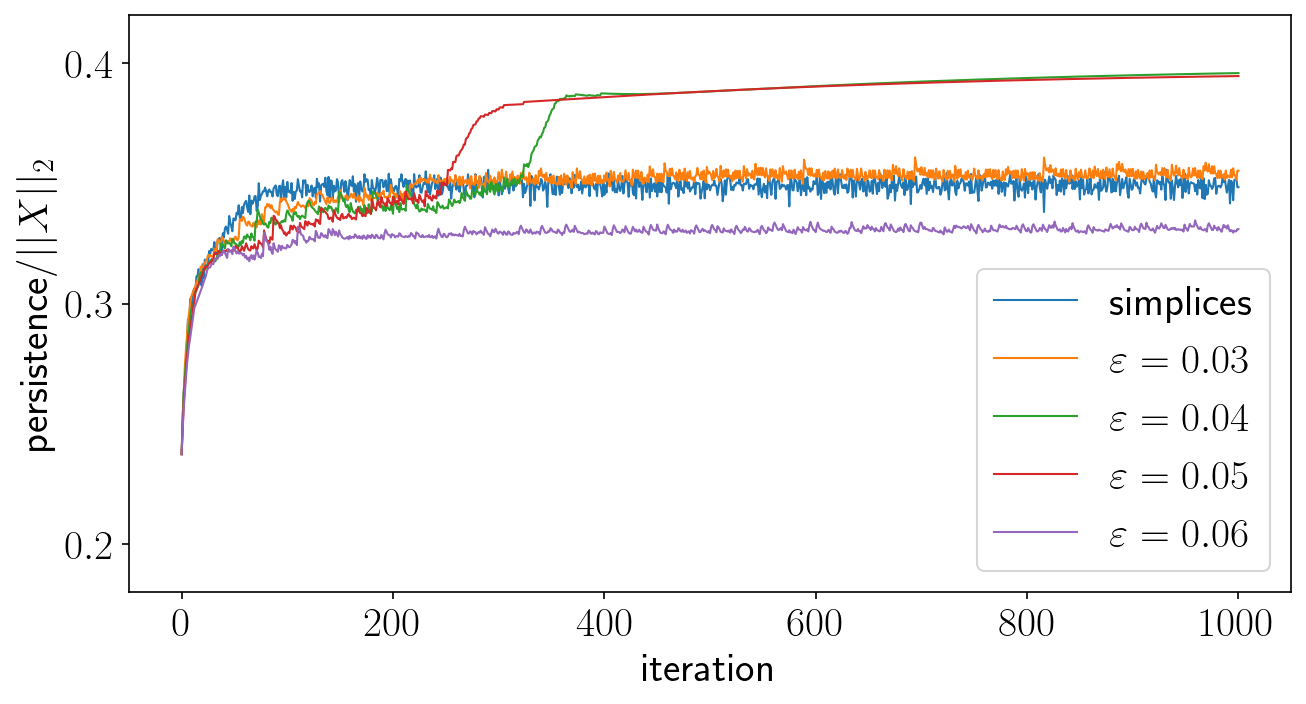}};

        \node at (6,2.5) {\includegraphics[width=2.5cm]{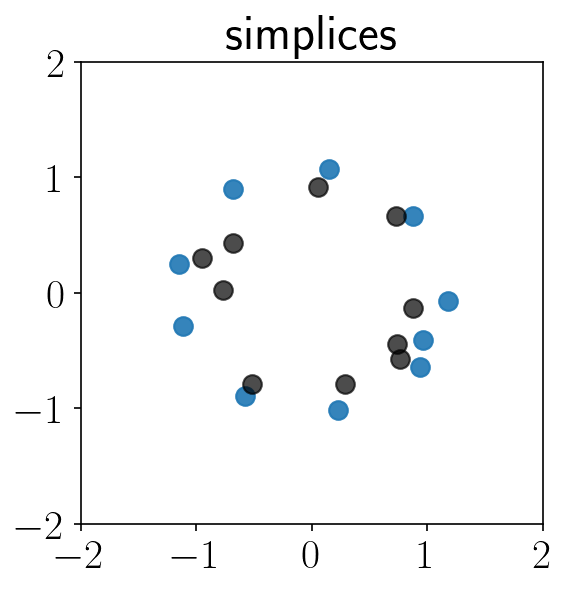}};
        \node at (8.5,2.5) {\includegraphics[width=2.5cm]{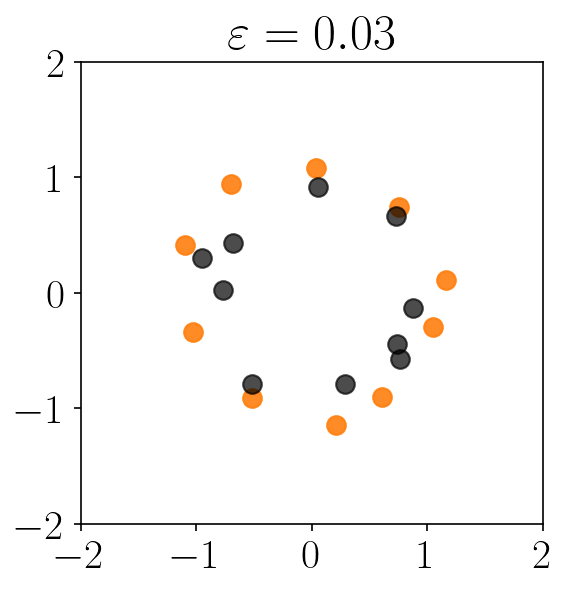}};
        \node at (6,0) {\includegraphics[width=2.5cm]{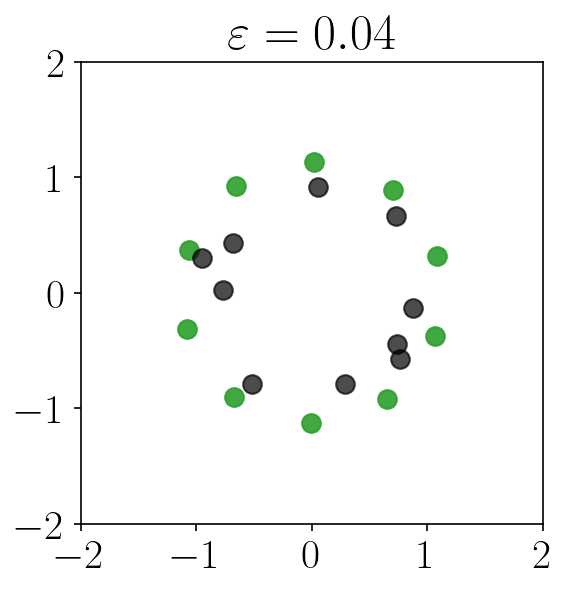}};
        \node at (8.5,0) {\includegraphics[width=2.5cm]{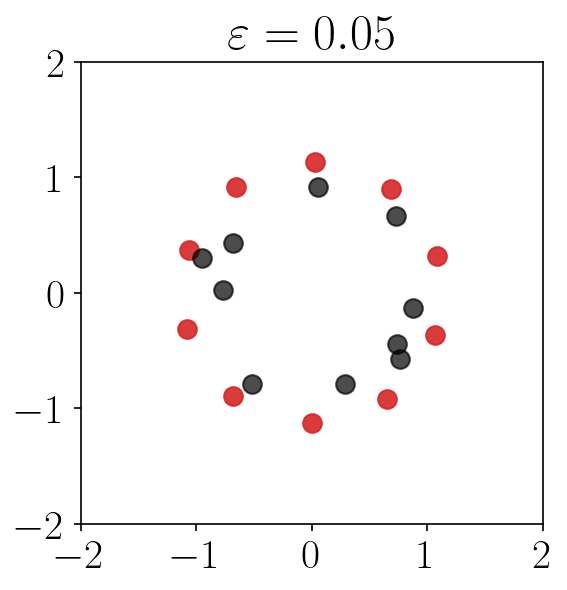}};
        \node at (7.125,-2.5) {\includegraphics[width=2.5cm]{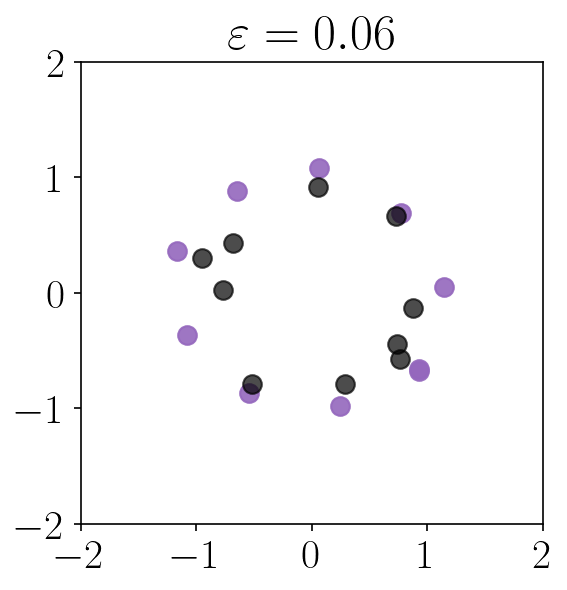}};
        
    \end{tikzpicture}
    \caption{Maximizing an degree-$1$ feature using either birth and death cochains, or birth and death simplices. We show how the normalized persistence changes over gradient ascent iterations (left), and the initial point cloud (in black) vs the final point cloud for each method (right).}
    \label{fig:tenpointruns}
\end{figure}

\subsection{Multi-cochain method}

\Cref{fig:tenpointruns} suggests that there is a trade-off between low and high $\varepsilon_0$ values. A high $\varepsilon_0$ value adjusts multiple simplices at once, possibly avoiding the local minima that the simplices method gets trapped in, but it can squash points together. Low $\varepsilon_0$ inherits the tendency of the simplices method to converge to non-regular configurations, but does not squash as many points. This is an ideal situation for using the multi-cochain version of birth and death content as described in \Cref{eq:multicochains}.

To evaluate the multi-cochains method, we repeat the experiment in the previous section on a set of $110$ randomly generated point clouds with sizes ranging from $10$ points to $20$ points (i.e.~$10$ clouds for each size). We fix the learning rate at $\gamma = 0.02$ throughout, and optimize the same loss functions as before but with birth and death content replaced by their multi-cochain versions (\Cref{eq:multicochains}) with $\mathcal{E} = \{0.01, 0.05, 0.1\}$. 
\Cref{fig:multicochains} shows the results. A direct comparison of normalized persistence shows that the multi-cochains method matches or outperforms the simplices method in almost all cases. The most extreme difference is shown on the right in \Cref{fig:multicochains}, where the multi-cochains finds a regular $10$-gon from a very irregular cloud.

\begin{figure}
    \centering
    \begin{subfigure}[t]{0.4\textwidth}
    \centering
    \includegraphics[height=5cm]{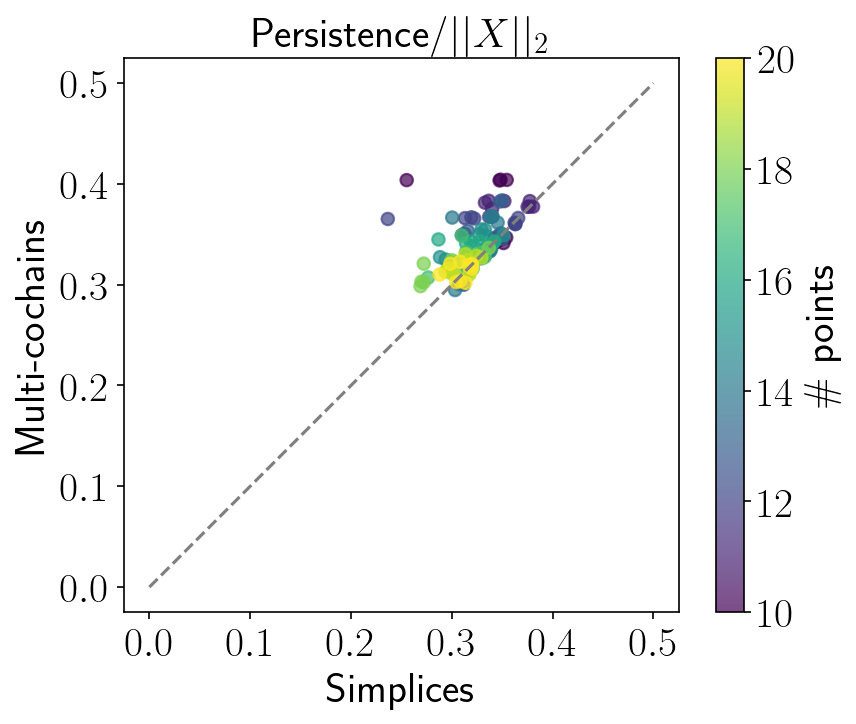}
    \end{subfigure}\hfill
    \begin{subfigure}[t]{0.6\textwidth}
    \centering
    \includegraphics[height=5cm]{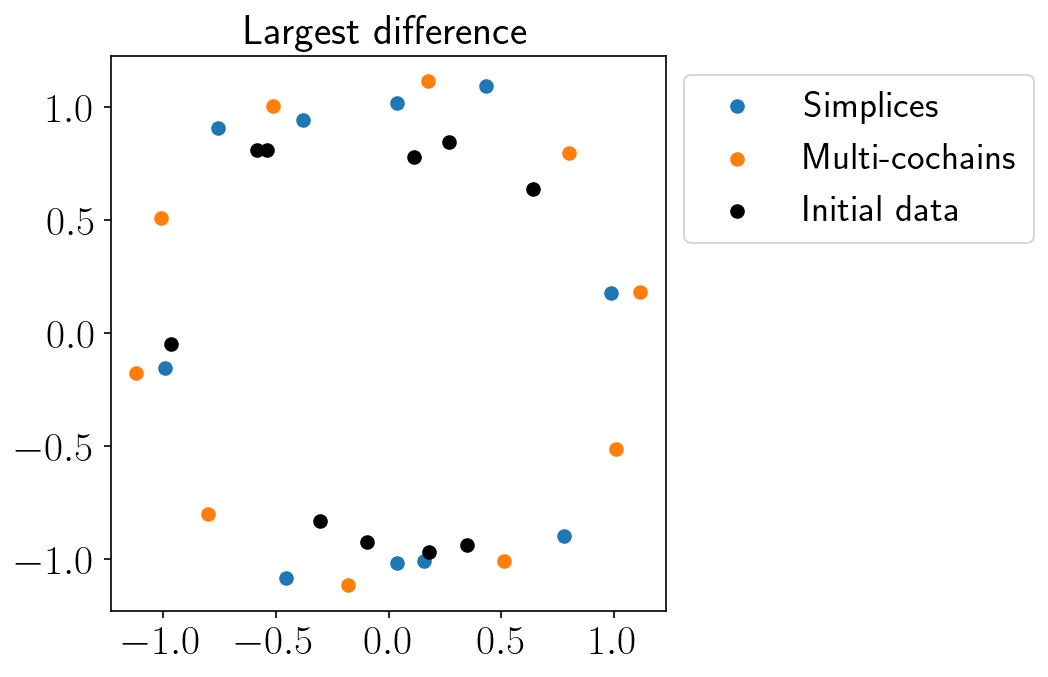}
    \end{subfigure}
    \caption{We compare the multi-cochains method to the simplices method on 110 random clouds of various sizes. We compare the normalized persistence as a scatter plot (left) where color shows the size of the point cloud. We also show the final configurations for the case where multi-cochains showed the most improvement over simplices (right).}
    \label{fig:multicochains}
\end{figure}

\section{Lower star filtration optimization}\label{sec:lowerstar}

We consider a model problem of reducing the number of points in the degree-$0$ persistence diagram of an image. We choose to target only death times of features so that local minima are preserved, but merged into one another. As before take two different approaches: the \emph{cochains method} which reduces the death content using gradient descent, and the \emph{simplices method} which reduces death time directly by identifying death simplices. Our model dataset consists of MNIST images of handwritten digits corrupted by a horizontal line of dark (but not black) pixels. We also add some small amount of noise so that birth and death pixels are always well-defined, and invert the image so that white pixels have value $0$ and black pixels have value $1$. 

We expect that reducing death content or death values will cause the pieces of the digit to merge together. We use $\varepsilon = 0.1$ for the cochains method and use a learning rate of $\gamma = 0.1$ for both methods. The end results are shown in \Cref{fig:MNIST}, and we see that each method makes some attempt to merge the disconnected pieces into a single symbol. When there are multiple natural ways to join two components together with a path, the simplices method makes a single choice (see the the $0$, $8$ and $9$). As shown in \Cref{fig:MNIST0}, this choice is highly sensitive to random noise. The cochains method, by contrast, can create two paths at the same time. The paths created are also closer in width to the rest of the digit (see digits $1$, $3$ and $5$), because the death cochain takes into account additional pixels with values near the death value, and tries to smoothly bridge between the connected components being merged. The paths created by the cochains method are more faithful to the original image (see digits $2$, $7$ and $9$), although for digit $6$ the interpolation is not accurate even though it appears realistic.

This experiment is not meant as a standalone application since the design is somewhat artificial. However, the benefits displayed by the cochains method suggest some general advantages in the lower-star setting, namely that the cochains method is able to handle situations where the death pixel location is ambiguous or unstable, and leads to more natural interpolations of the existing values when modifying an image.

\begin{figure}
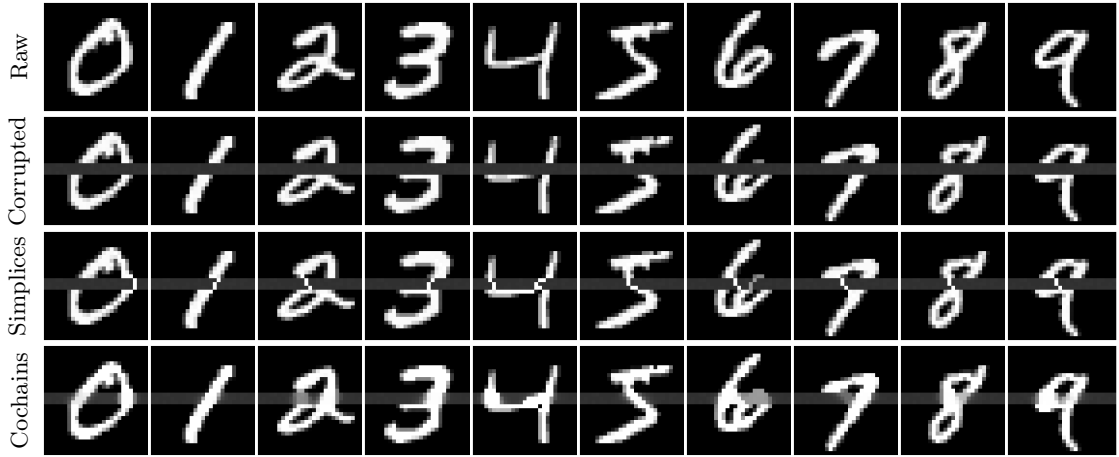

    \centering
    \begin{tikzpicture}[scale=0.95]
    \node[rotate=90] at (0.4, 1.6) {\footnotesize Raw};
    \node[rotate=90] at (0.4, 0) {\footnotesize Corrupted};
    \node[rotate=90] at (0.4, -1.6) {\footnotesize Simplices};
    \node[rotate=90] at (0.4, -3.2) {\footnotesize Cochains};
        \foreach \i in {1,...,10}{
        \node at (1.5*\i, 1.6) {
        \includegraphics[width=1.5cm]{imageshader/MNIST_sample\i _raw.png}
        };
        \node at (1.5*\i, 0) {
        \includegraphics[width=1.5cm]{imageshader/MNIST_sample\i _initial.png}
        };
        \node at (1.5*\i, -1.6) {
        \includegraphics[width=1.5cm]{imageshader/MNIST_simplices_sample\i _final.png}
        };
        \node at (1.5*\i, -3.2) {
        \includegraphics[width=1.5cm]{imageshader/MNIST_cochains_sample\i _final.png}
        };
        }
    \end{tikzpicture}
    \caption{MNIST images (top row) are corrupted by a horizontal band of dark gray pixels and slight noising (second row). We attempt to repair them by reducing the death time (third row) or death content (bottom row) of degree-$0$ features.}
    \label{fig:MNIST}
\end{figure}

\begin{figure}
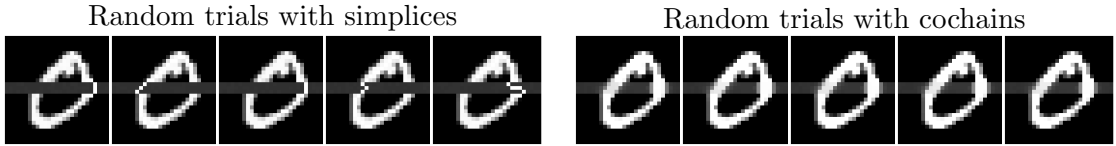

    \centering
    
    \begin{tikzpicture}[scale=0.95]
    \node at (4.5, 1) {Random trials with simplices};
        \node at (12.5, 1) {Random trials with cochains};
        \foreach \i in {1,2,3,4,5} {
        \node at (1.5*\i, 0) {
        \includegraphics[width=1.5cm]{imageshader/MNIST_simplices_sample1_trial\i _final.png}
        };
        }
        \foreach \i in {1,2,3,4,5} {
        \node at (1.5*\i+8, 0) {
        \includegraphics[width=1.5cm]{imageshader/MNIST_cochains_sample1_trial\i _final.png}
        };
        }
    \end{tikzpicture}
    \caption{Five random trials of the zero-digit repair task in \Cref{fig:MNIST}, with slightly different noise each time. Random trials for additional digits are in Supplemental Material.}
    \label{fig:MNIST0}
\end{figure}

\section{Feature weight optimization}\label{sec:features}

In this section, we use birth and death cochains to learn feature importance. Given an input dataset $X \in \mathbb{R}^{n \times d}$ consisting of $n$ observations with $d$ features, we consider the problem of reweighting the features to optimize some function of Vietoris-Rips persistence. This class of optimization tasks was first introduced in \cite{bubenik2023topological} in the context of feature selection for multivariate time series. To demonstrate birth and death cochains in this setting, we reproduce the experiment in Section 7.2 of that paper on finding periodicity in multivariate time series. 

We briefly recall the experimental setup. We consider a multivariate time series $f: \mathbb{R} \to \mathbb{R}^{10}$ with components
$$
f_i(t) = \sin\left(\frac{2\pi}{50}(t-K_i)\right) \quad i = 1,2,\ldots,10,
$$
where the $K_i$ are random phase shifts. We restrict to the domain $D = \{1,\ldots, 300\}$, giving 10 pure signals $f_1,\dots,f_{10}$ in \Cref{fig:puresines}. We shuffle the values of $f_4,\ldots,f_{10}$ to eliminate the periodicity and then add Gaussian noise with standard deviation $\sigma = 1.5$ (see \Cref{fig:sinewaves-noise}).\footnote{Note that our various random values (most importantly the phase shifts) are different to those in \cite{bubenik2023topological}, so our dataset (and results) will differ.} 

Let $\Sigma_{10} = \{(w_1, w_2, \ldots, w_{10}) \in \mathbb{R}^{10} \mid \forall_i w_i \geq 0,\ \sum_i w_i = 1\}$ be the standard $10$-simplex from which we draw our weights.
For weights $\mathbf{w} = (w_1,\ldots,w_{10}) \in \Sigma_{10}$, we reweight the time series to $w_f(\mathbf{w}):=(w_1 f_1,\ldots,w_{10} f_{10})$ and apply the sliding window embedding as in \cite{bubenik2023topological} with window length $L=250$. 
This yields $300-L+1=51$ points, where for $j=1,\dots,51$,
$$
X_j^{(\mathbf{w})}
:=
\big(
w_1 f_1(j), \ldots, w_1 f_1(j+L-1),\;
\ldots,\;
w_{10} f_{10}(j), \ldots, w_{10} f_{10}(j+L-1)
\big) \in \mathbb{R}^{10L} = \mathbb{R}^{2500}.
$$
Thus, each feature $f_i$ contributes a block of $L$ coordinates scaled by $w_i$, producing a pointcloud $\{X_1^{(\mathbf{w})},\ldots,X_{51}^{(\mathbf{w})}\} \subset \mathbb{R}^{2500}$, equipped with the $\ell_1$ norm \cite{bubenik2023topological}. 
We want to maximize the persistence or the persistence content of the longest degree-$1$ bar in the Vietoris-Rips persistence of this pointcloud. Since degree-$1$ features in sliding window embeddings are known to capture periodicity, this should result in the first three features being upweighted.

We run gradient ascent using the simplices method (maximizing persistence) for $1,000$ iterations and the cochains method (maximizing persistence content) for $100$ iterations, each with the learning rate $\gamma = 2^{-6}/10$ from \cite{bubenik2023topological}. The cochains method uses the multi-cochain method with $\mathcal{E} = \{0.01, 0.05, 0.1\}$. We also compare these two methods to a one-step approximation of gradient ascent using cochains. More precisely, we compute the gradient $\mathbf{w}'$ of persistence content at the uniform weights vector $\frac{1}{d}\mathbf{1}$, and project to the tangent space of the simplex $\Sigma_d$. Then, rather than do gradient ascent steps, we find the intersection of the line segment $\left\{\frac{1}{d}\mathbf{1} + t \mathbf{w}' \;\middle|\; t \in [0,\infty)\right\}$ and the boundary of $\Sigma_d$ and take the result as our estimate of the optimum. The practical benefits of this one step process are that we do not have to worry about convergence or dependence on step size, and we also reduce the number of persistent homology calculations to one. We call this last method the \emph{One-step (cochains)} method.

\begin{figure}
    \centering
    \begin{subfigure}{0.28\textwidth}
        \includegraphics[width=\textwidth]{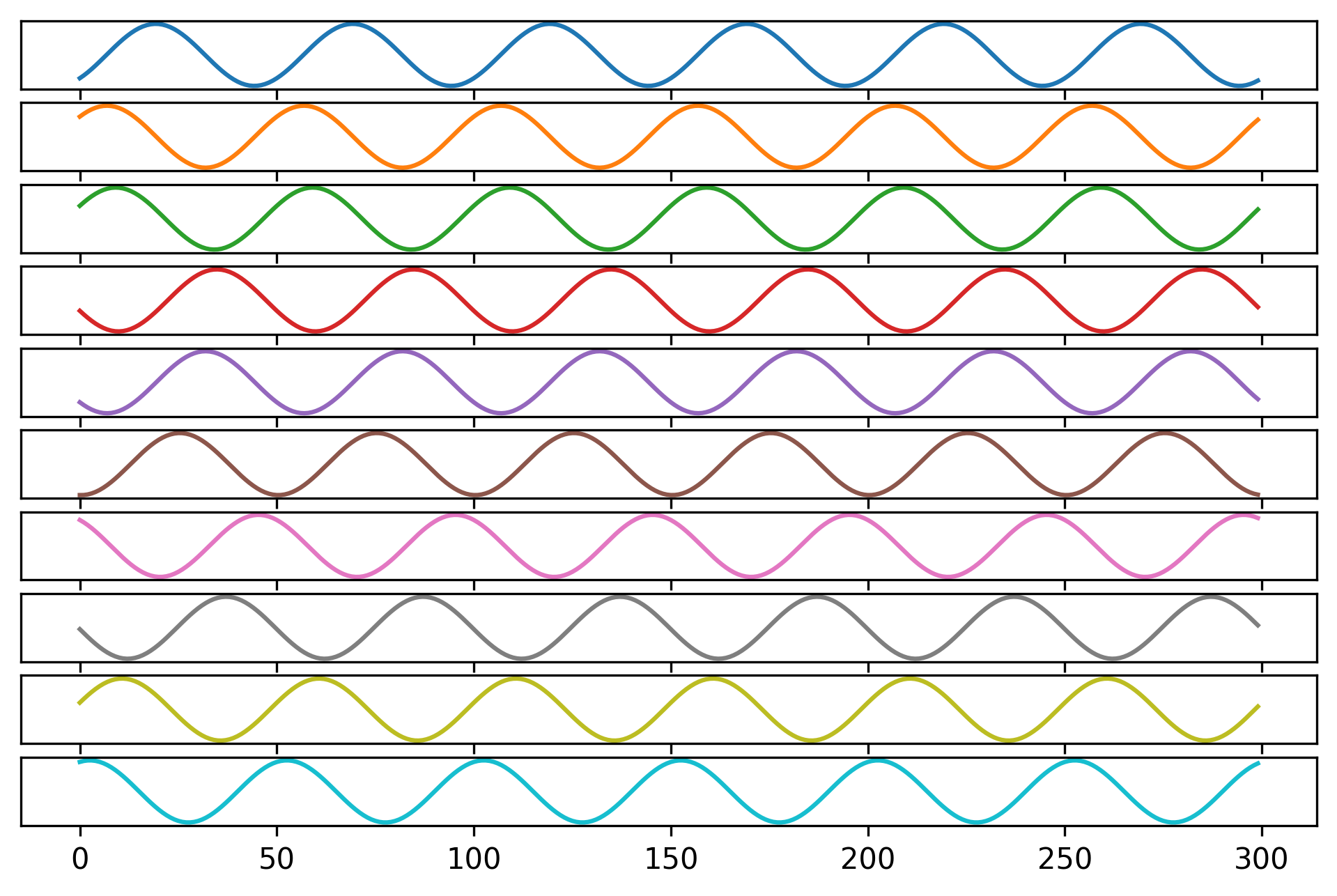}\caption{Pure sine waves}\label{fig:puresines}
    \end{subfigure}
    \begin{subfigure}{0.28\textwidth}
        \includegraphics[width=\textwidth]{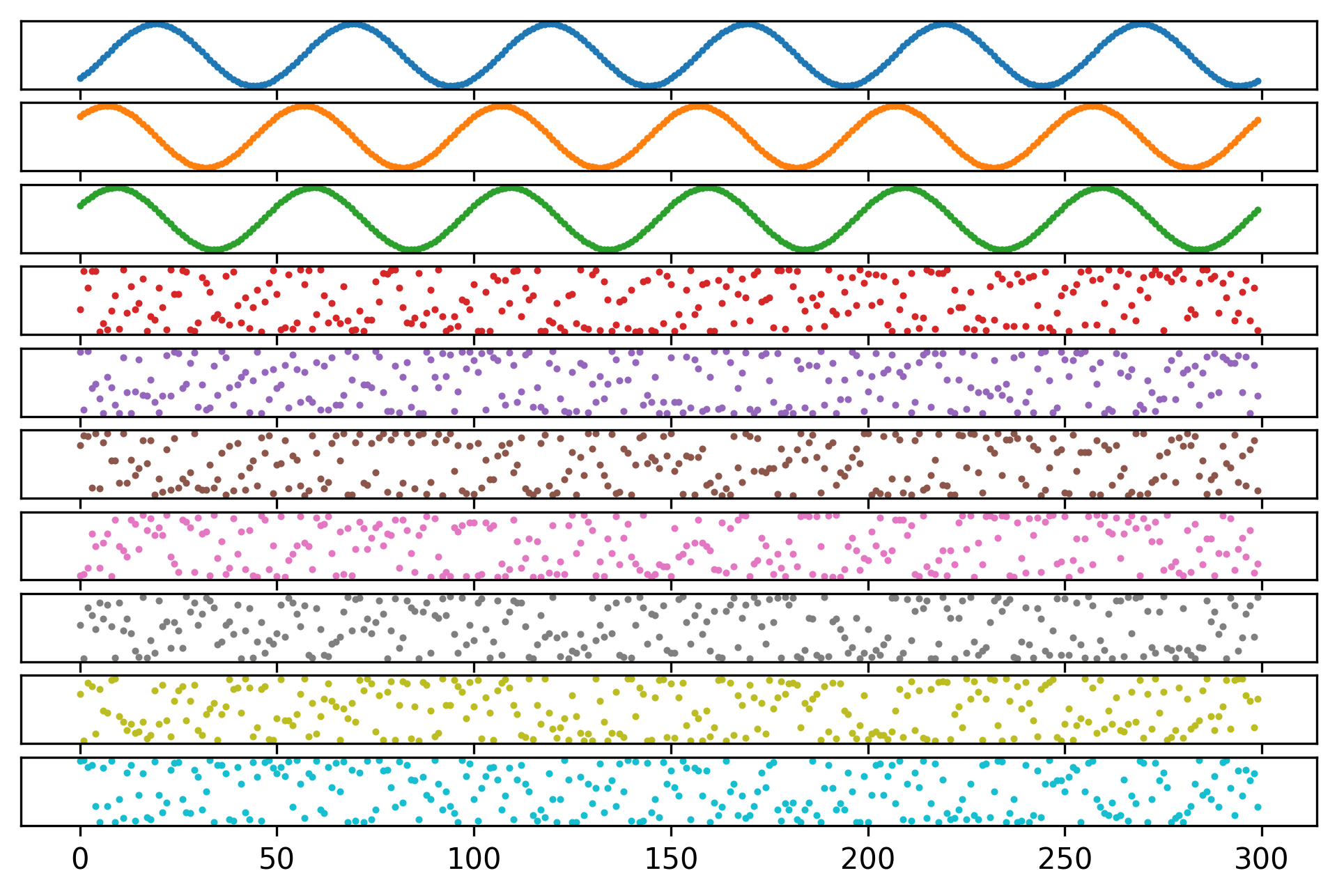}
        \caption{Shuffled}
        \label{fig:shuffled}
    \end{subfigure}
    \begin{subfigure}{0.28\textwidth}
        \includegraphics[width=\textwidth]{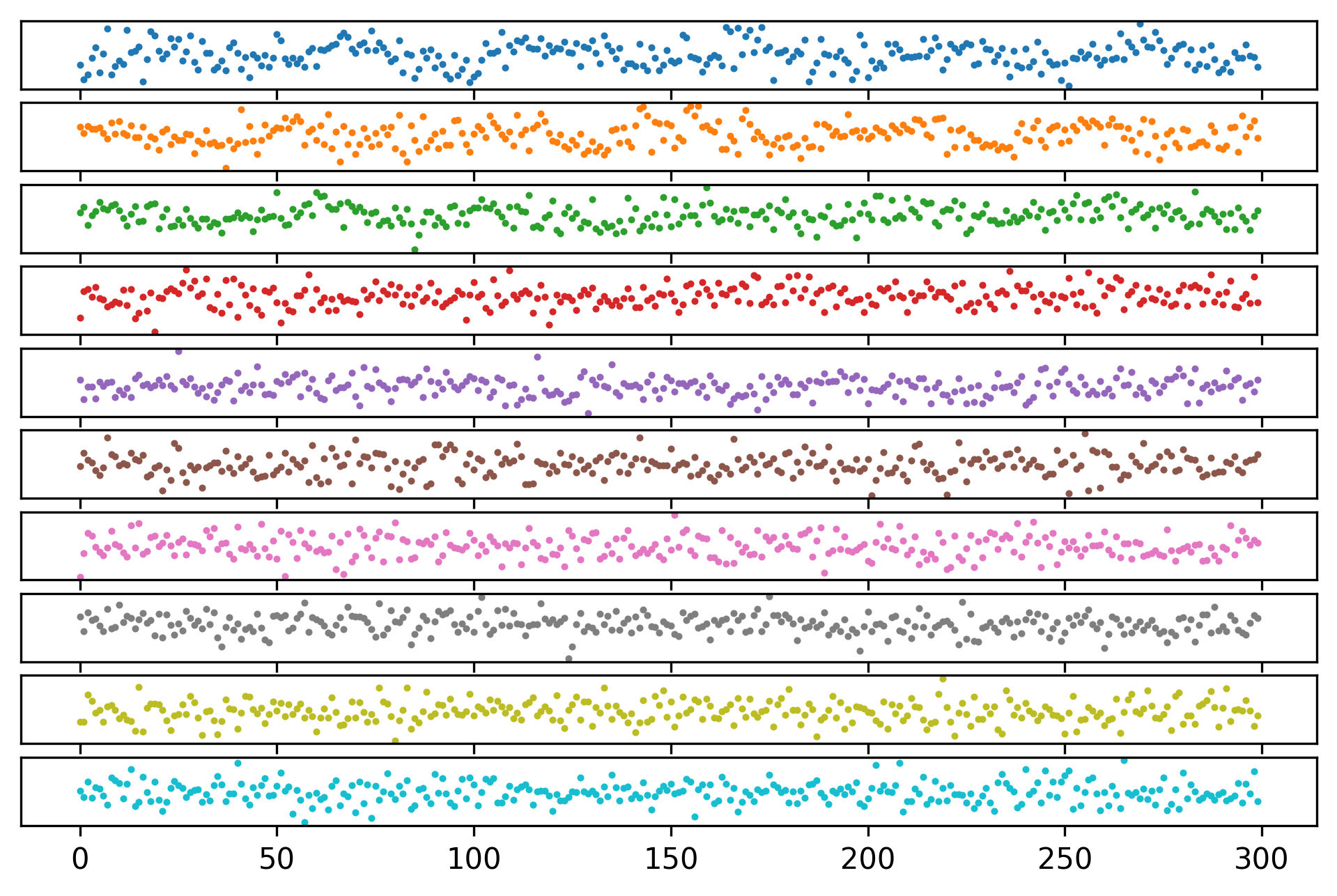}
        \caption{Noise added}
        \label{fig:sinewaves-noise}
    \end{subfigure}
    \caption{Data creation process for extracting periodicity, following \cite{bubenik2023topological}.}
    \label{fig:sinewaves}
\end{figure}

We repeat the experiment ten times, each time with new random phase shifts $K_i$ and added noise. We compare the persistence of the longest degree-$1$ bar achieved by each method in \Cref{fig:persistence_comparison}. We see that with the exception of Trial 8, the simplices and cochains methods have almost identical performance, while the one-step method lags behind but still yields higher persistence than the unweighted initial cloud. Trial 8 is a special case since the most prominent degree-$1$ cycle in the initial cloud does not accurately track the time dimension, but is instead an artifact of randomness. The extent to which the persistence of this feature can be increased is therefore highly limited since it cannot be promoted by weighting just a few features, and this is demonstrated in the low performance of the cochains method. The simplices method avoids this issue by random chance; adjusting weights based just on the birth edge happens to increase the correct weights to promote the ``true'' degree-$1$ feature into being the longest bar. After this, the method can easily maximize the persistence of the new feature as with other trials.  

\begin{figure}
    \centering
\begin{subfigure}{\textwidth}
    \centering
    \includegraphics[width=0.6\linewidth]{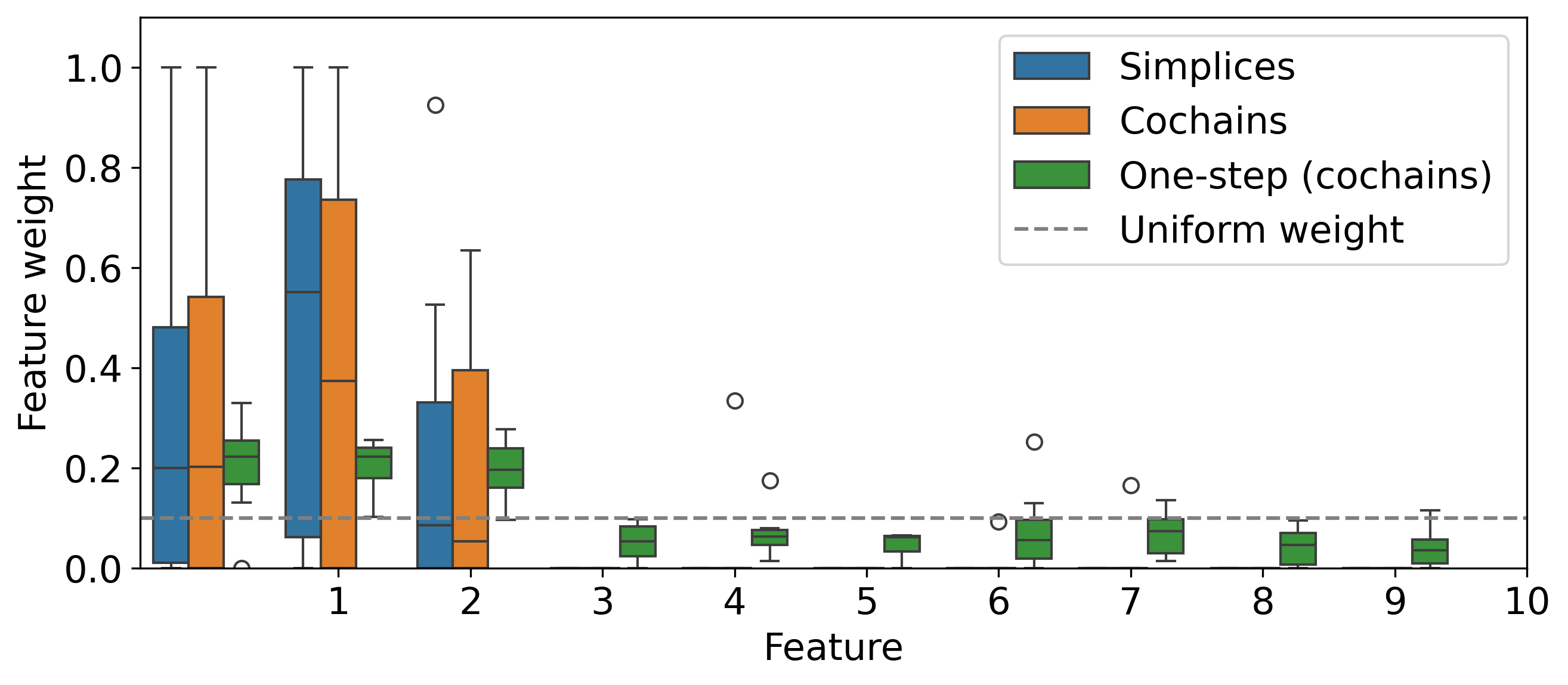}
    \caption{Distribution of learned feature weights over ten trials}
    \label{fig:feature_weights}
\end{subfigure}
    \begin{subfigure}{0.58\textwidth}
    \centering
        \includegraphics[height=4.5cm]{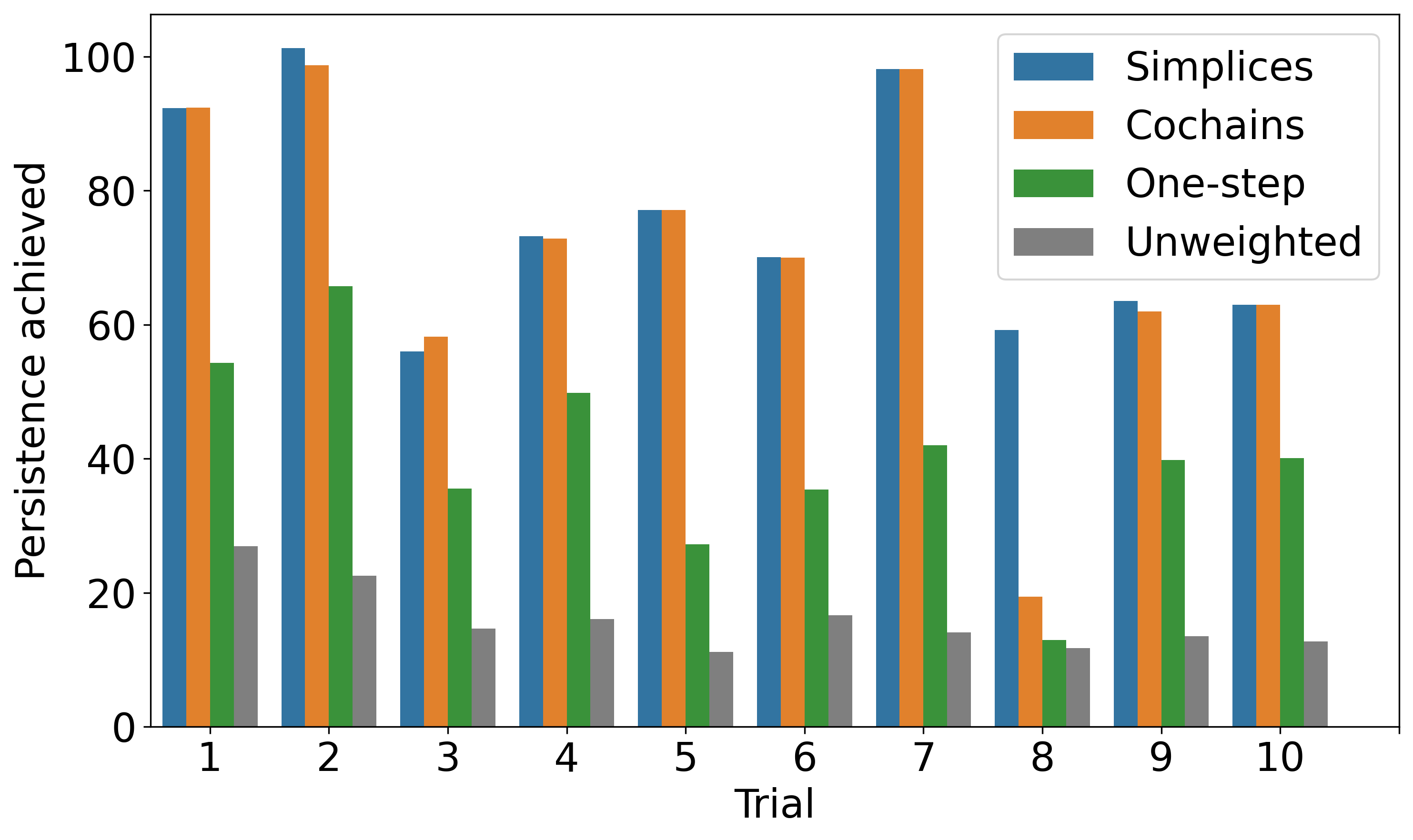} 
        \caption{Persistence of final weighted cloud}
        \label{fig:persistence_comparison}
    \end{subfigure}%
    \begin{subfigure}{0.4\textwidth}
    \centering
        \includegraphics[height=4.5cm]{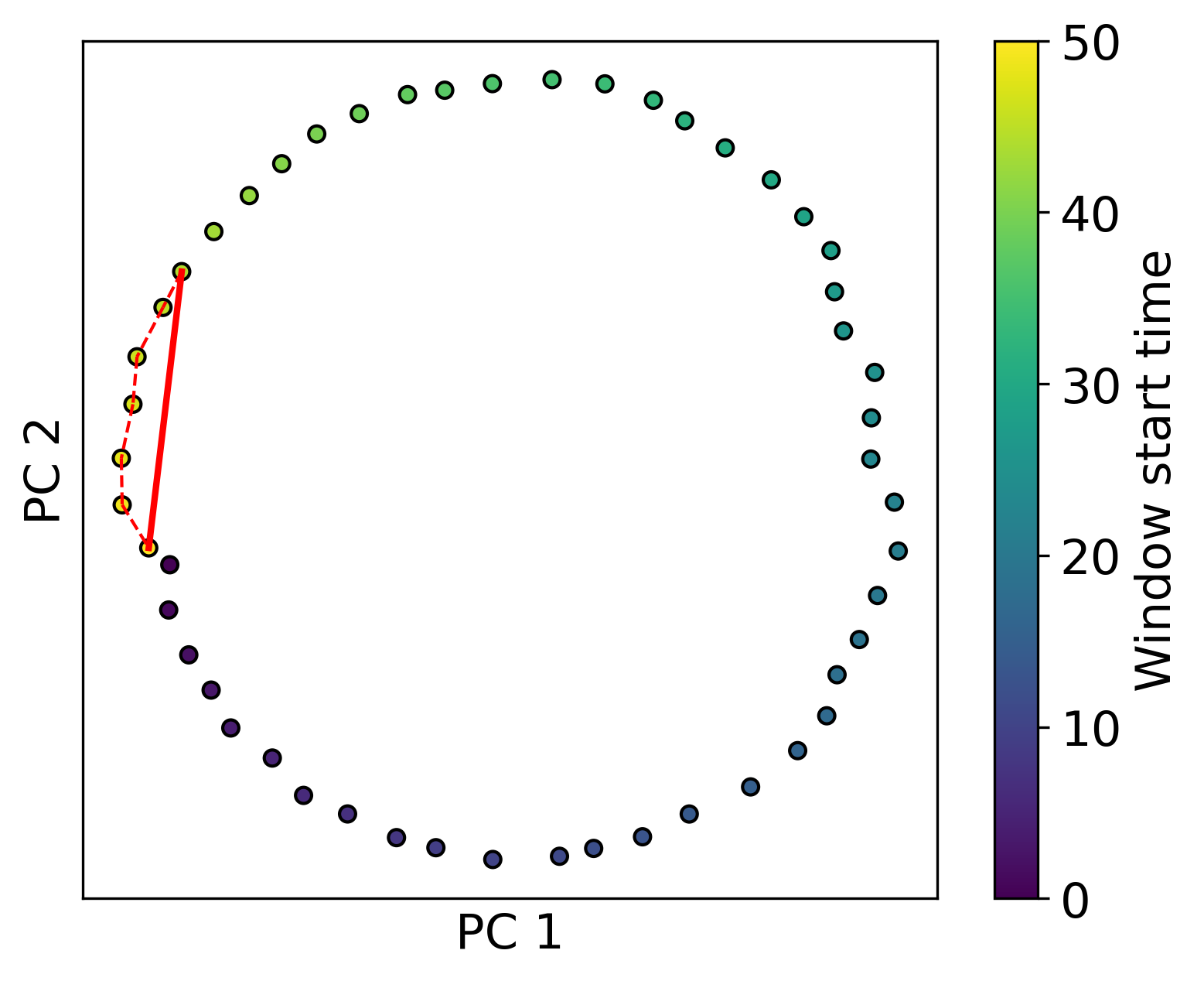}
        \caption{Starting cycle for Trial 8}
    \end{subfigure}
    \caption{Comparing three methods for optimizing feature weights to maximize the degree-$1$ persistence of a sliding window embedding using ten random trials. We show the distribution of learned feature weights (top) and the final persistence achieved by each method (bottom left). To explain the outlier behavior in Trial 8, we examine (bottom right) the most persistent degree-$1$ feature in the initial cloud via PCA, showing the cycle (dashed) and birth edge (solid).}
    \label{fig:feature_weighter_persistences}
\end{figure}

The final feature weights for each trial are shown in \Cref{fig:feature_weights}. Recall that the periodicity survives only in the first three features, which we should see upweighted. We see that while the simplices and cochains methods are more effective than the one-step (cochains) method at suppressing non-periodic features, their weighting of features 1--3 varies greatly. This is often because some of the first three signals are in phase and thus redundant. By contrast, the one-step method tends to overweight noise features but has lower variance in weights for the periodic features. As a result, the one-step method is a much better classifier in terms of non-periodic vs periodic features; the first three features are almost always upweighted relative to uniformity, and the others downweighted. We therefore propose that while the gradient ascent processes have better performance in terms of raw persistence, the one-step method provides a robust and efficient approximate method which can scale better to larger clouds since it requires fewer persistence computations.

\section{Real-world data application}\label{sec:arctic}

\subsection{Motivation}

We now use birth and death cochains together with feature selection to follow up on the investigation in \cite{ilagor2023visualizing} of Arctic ice extent images from 1999--2009 obtained from \cite{Long}. Each image is processed to be a binary $1530\times 1530$ image (with $1$ indicating ice), which we downsample to $51 \times 51$ and unravel to a vector in $\mathbb{R}^{2601}$. Each year's data is then a point cloud in $\mathbb{R}^{2601}$, equipped with the $\ell^1$ distance\footnote{We use $\ell_1$-distance rather than the OT distance in \cite{ilagor2023visualizing} to enable the reweighting. }, from which we can compute a Vietoris-Rips persistence diagram. With uniform weights, we can see a prominent degree-$1$ feature in all years except 2009, where the data collection ended before the end of the year (see \Cref{fig:arctic_grid} for the 2006 data and Supplemental Material for the rest). The annual freezing and melting cycle is not enough to explain these features by itself since if the melting and freezing processes were symmetrical, the shape of the data would be a line not a loop. It was proposed in \cite{ilagor2023visualizing} that the degree-$1$ features result from an assymetry in the melting and freezing stages, and a single example was given to support this. In this section will use feature reweighting and birth/death cochains to refine this hypothesis.

\subsection{Feature selection experiment}

We use the year 2006 as our main example, with results for other years shown in Supplementary Material. \Cref{fig:arctic_grid} shows snapshots of the ice extent for this year. We draw weight vectors from the $2601$-simplex $\Sigma_{2601}$, so that we have one weight for each pixel. For $\mathbf{w} = (w_1, \ldots, w_{2601}) \in \Sigma_{2601}$, we let $w_X(\mathbf{w})$ be the point cloud obtained from $X$ by multiplying feature $i$ by $w_i$. We try to find the weights $\mathbf{w}$ which maximize the persistence content of the reweighted point cloud $ w_X(\mathbf{w})$. Motivated by the ability of the one-step method in \Cref{sec:features} to effectively identify topologically relevant features, we approach our problem as a feature-selection task rather than just a reweighting task. To achieve this, we compute the gradient of the degree-$1$ persistence content at the uniform weights vector $\frac{1}{2601}\mathbf{1}$ and select only those pixels whose gradient value was in the top half of positive values. This produces a binary mask which can be used to restrict the image data to only the most relevant pixels for degree-$1$ persistence. 

We can display this binary mask as an image, as shown on the left in \Cref{fig:arctic_pca}. The regions most relevant to degree-$1$ persistence are clearly visible in white. To verify that this mask helps reveal and explain the degree-$1$ feature in the data, we perform principal components analysis (PCA) on the masked dataset and compare it to PCA on the raw data $X$ in \Cref{fig:arctic_pca}. Note that we can clearly see the loop in the data in the masked version. We see that the melting (approximately days 0-250) and freezing (approximately days 250-365) parts of the year are distinguished in the scatter plot by their coordinates along the second principal component (PC2). Specifically, the left hand side (near North America) melts last but also freezes last, creating an asymmetrical cycle. Note that this is not easily observable in \Cref{fig:arctic_grid}. The degree-$1$ feature is also more easily located in time as well: the loop is located in the middle of the year, rather than early or late. 

The same analysis is shown for other years in Supplemental Material. Across all years, we see some commonalities: the mask is supported in a circle at the edge of the main core, the masked data always shows a circle in the PCA plot in contrast to the raw data, and the second principal component always distinguishes the melting and freezing phases. The positive and negative regions for the second principal component can be in different places showing some year-to-year variation. Interestingly, when using all positive values instead of the top half for our mask, one of the years (2003) no longer shows a circle in the PCA plot because the selection is not conservative enough; this is what motivated our choice earlier. 

\begin{figure}
    \centering
    \includegraphics[height=4cm]{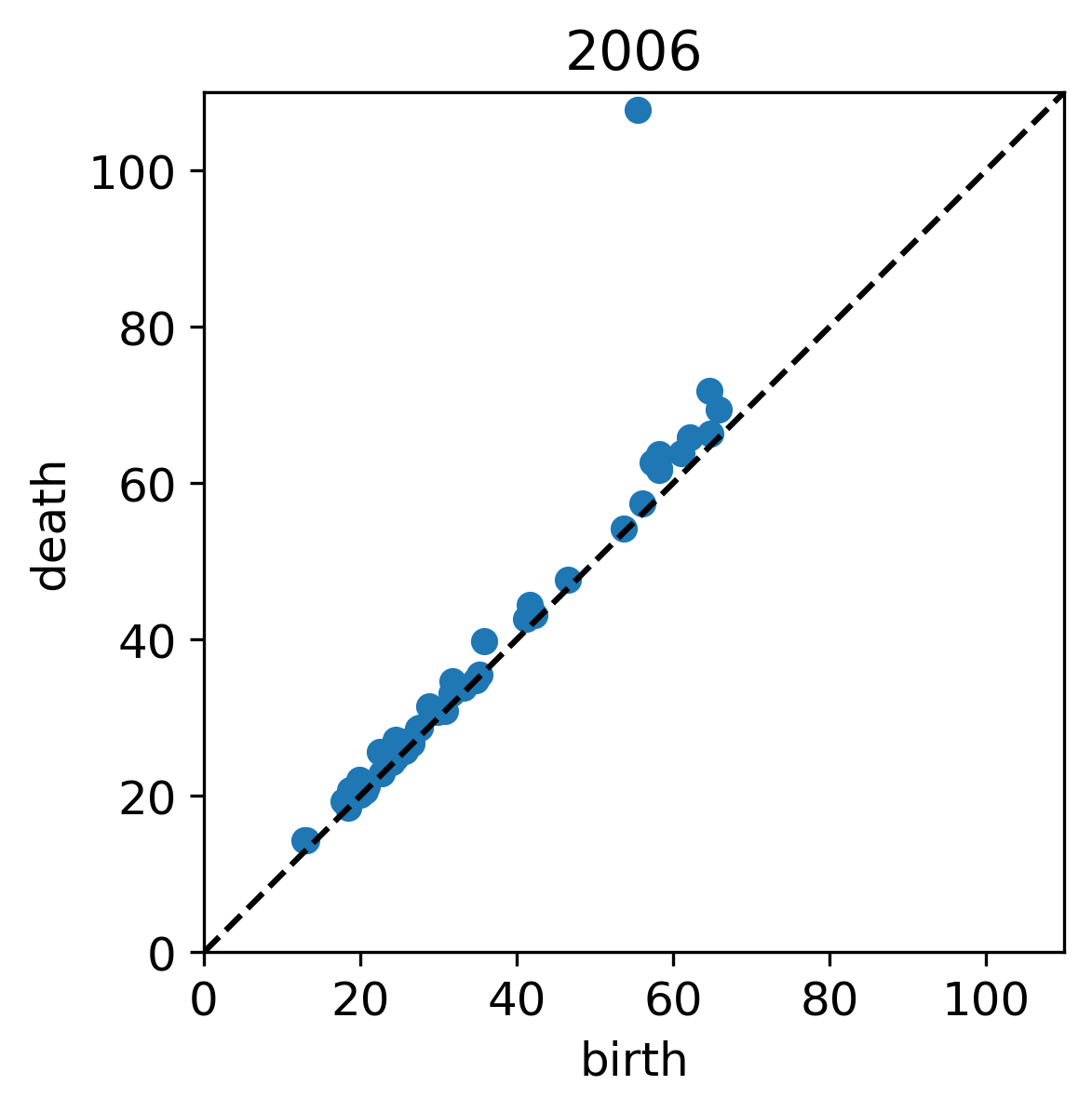}
    \includegraphics[height=4cm]{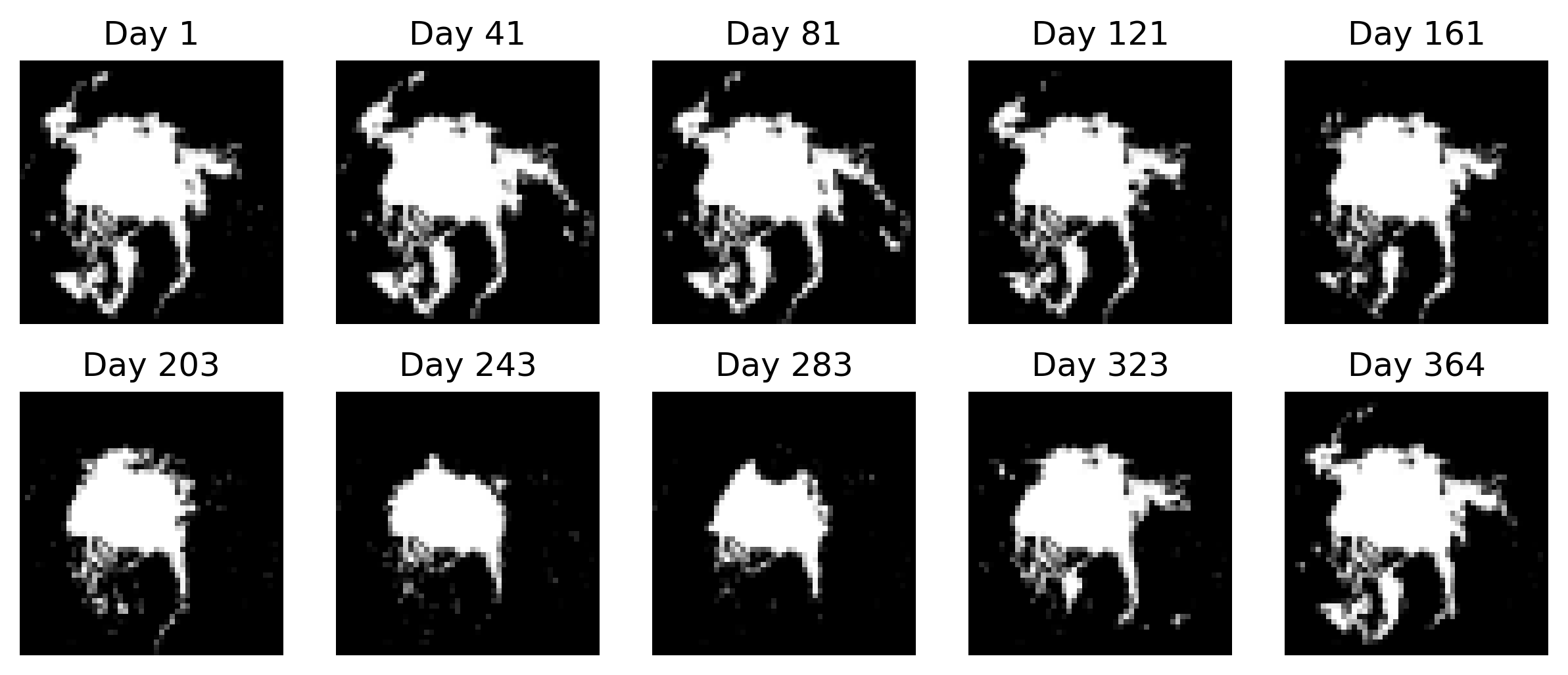}
    \caption{The degree-$1$ persistence diagram for Arctic ice data from 2006 (left) and snapshots of that data (right).}
    \label{fig:arctic_grid}
\end{figure}

\begin{figure}
        \centering
    \begin{tikzpicture}[scale=1]
    \node at (-4,2) {\small mask};
    \node at (0,2) {\small projection};
    \node at (5, 2) {\small principal components};
    \node[rotate=90] at (-6, 0) {\small raw data};
        \node at (0,0) {\includegraphics[height=3cm]{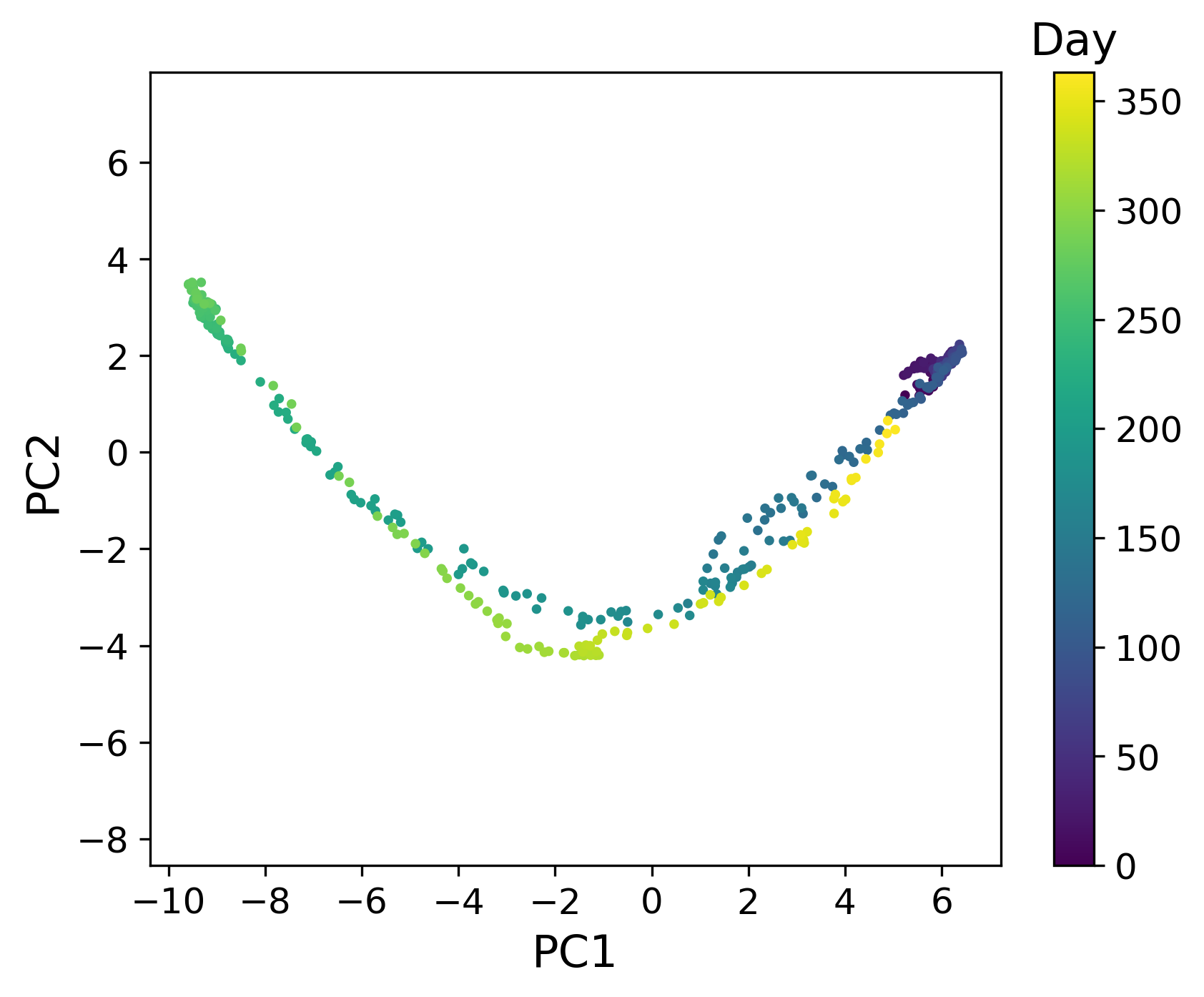}};
        \node at (3.5,0) {\includegraphics[height=3cm]{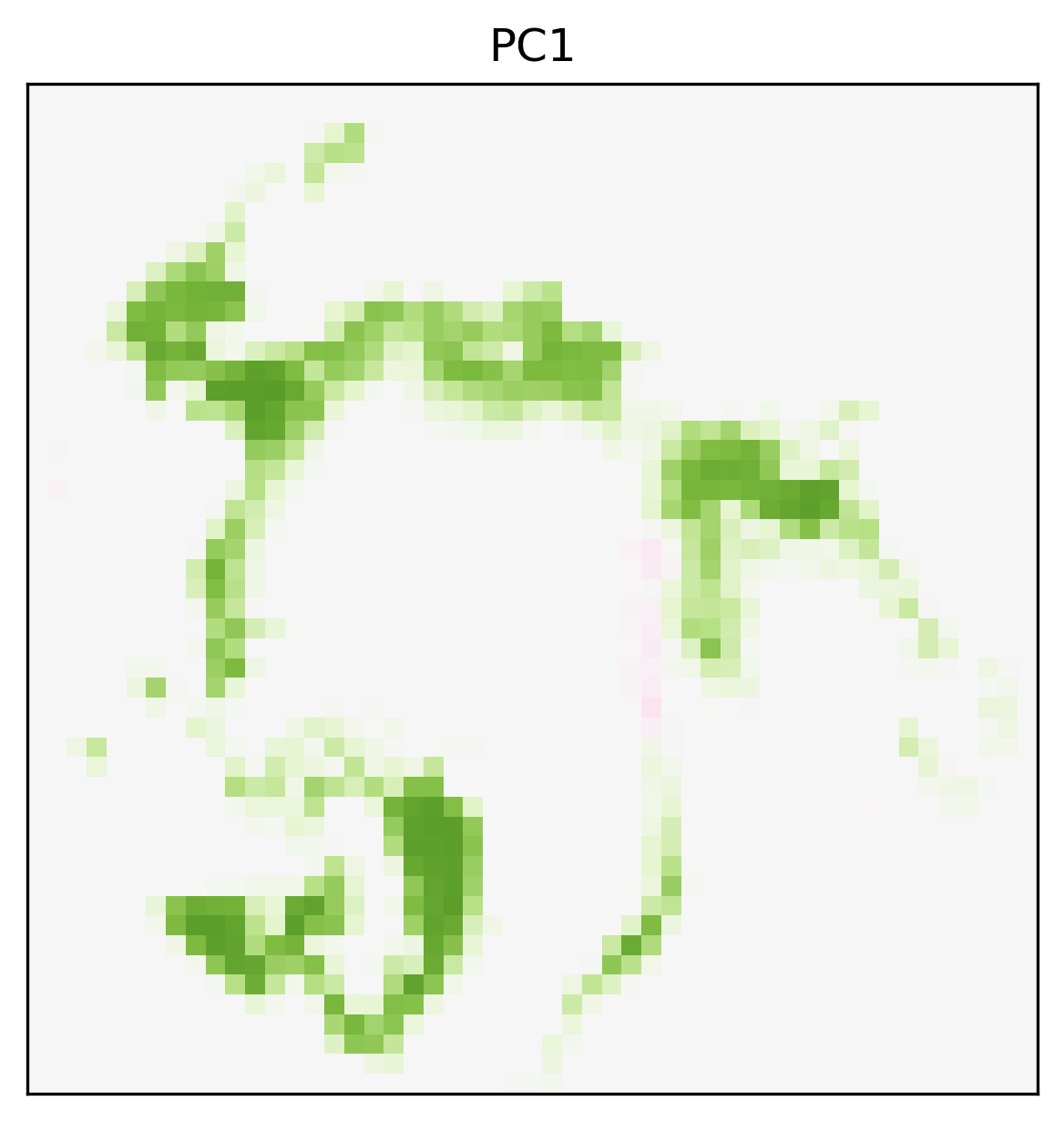}};
        \node at (7,0) {\includegraphics[height=3cm]{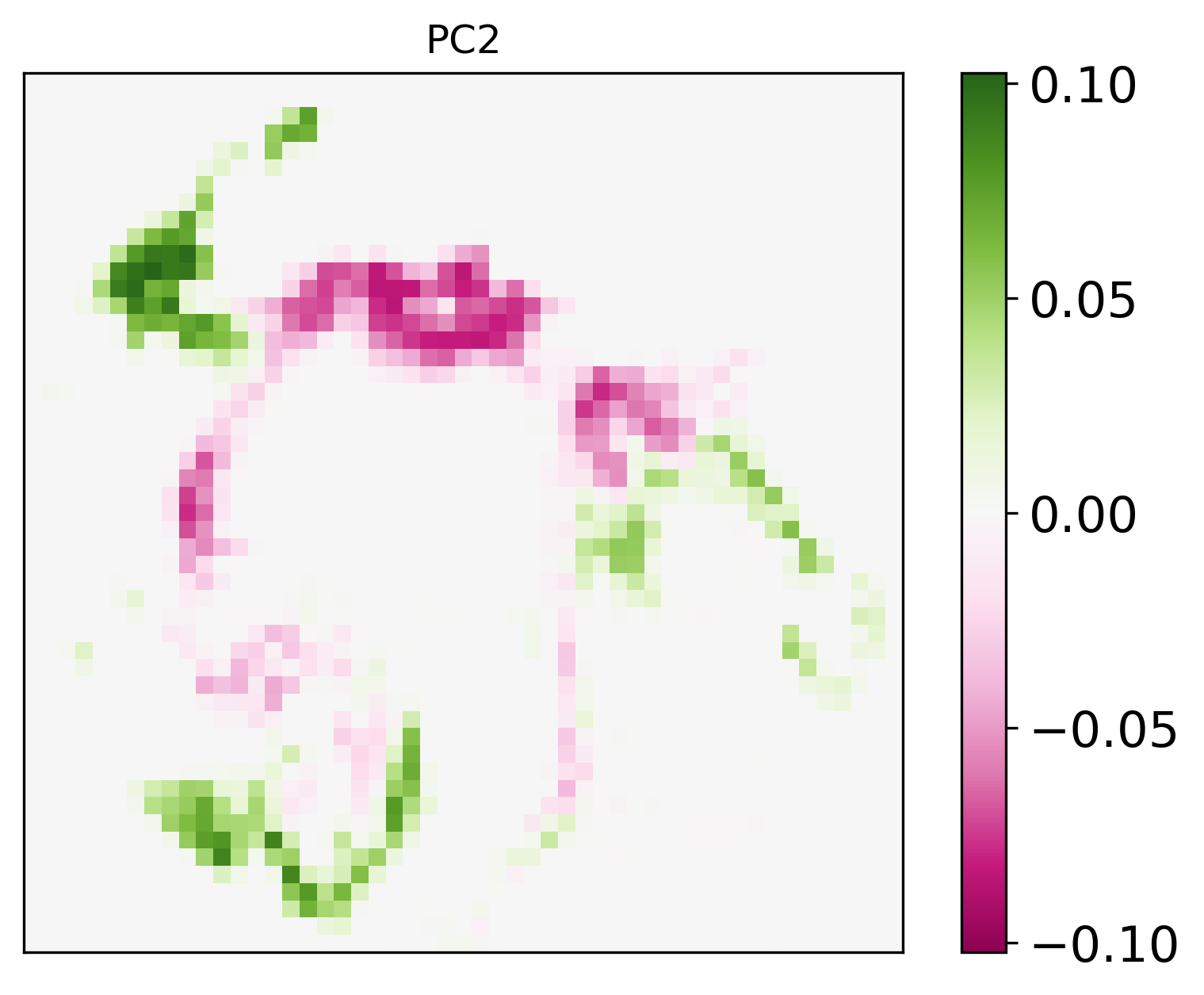}};
    \node[rotate=90] at (-6, -3) {\small masked data};
        \node at (-4,-3.05) {\includegraphics[height=3cm]{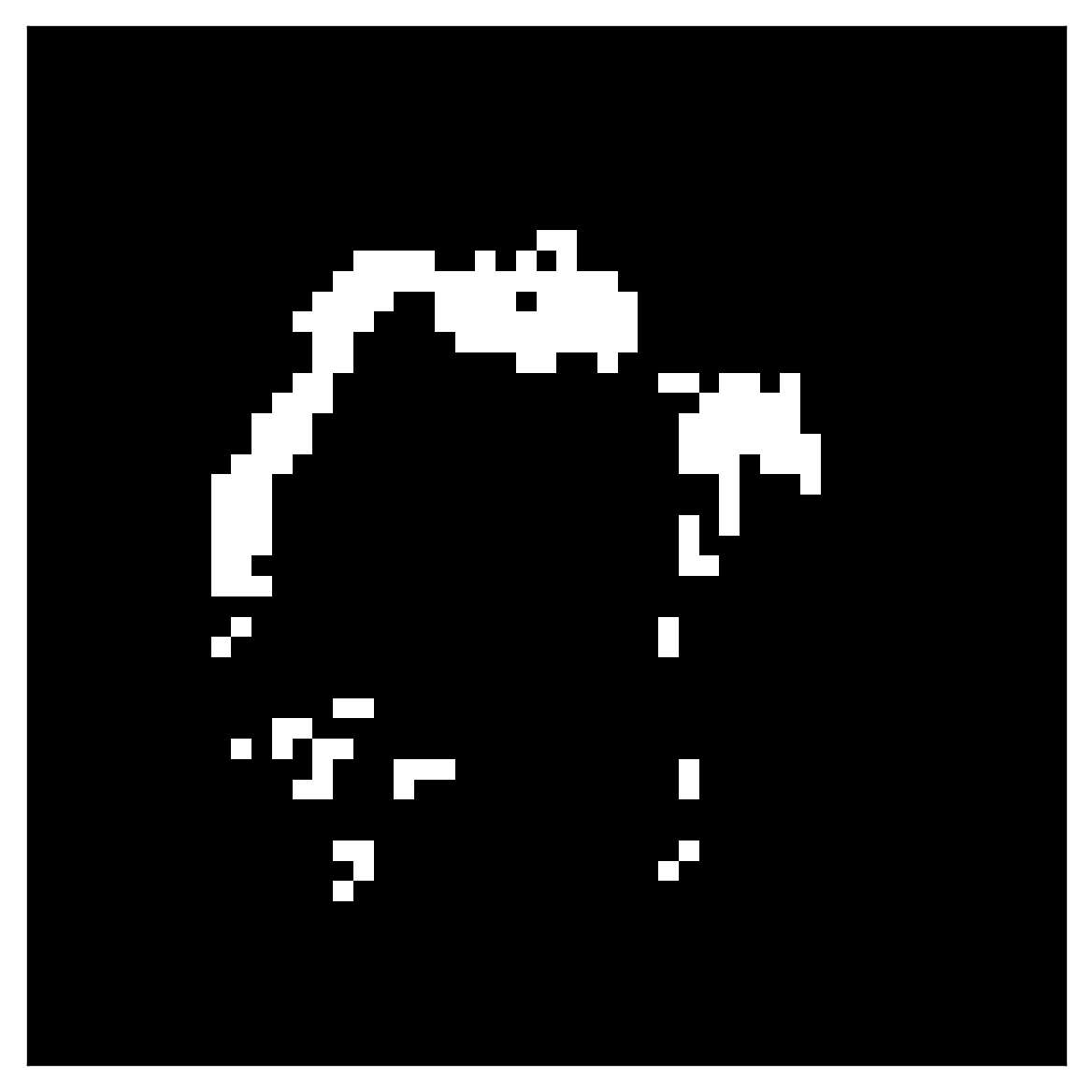}};
        \node at (0,-3) {\includegraphics[height=3cm]{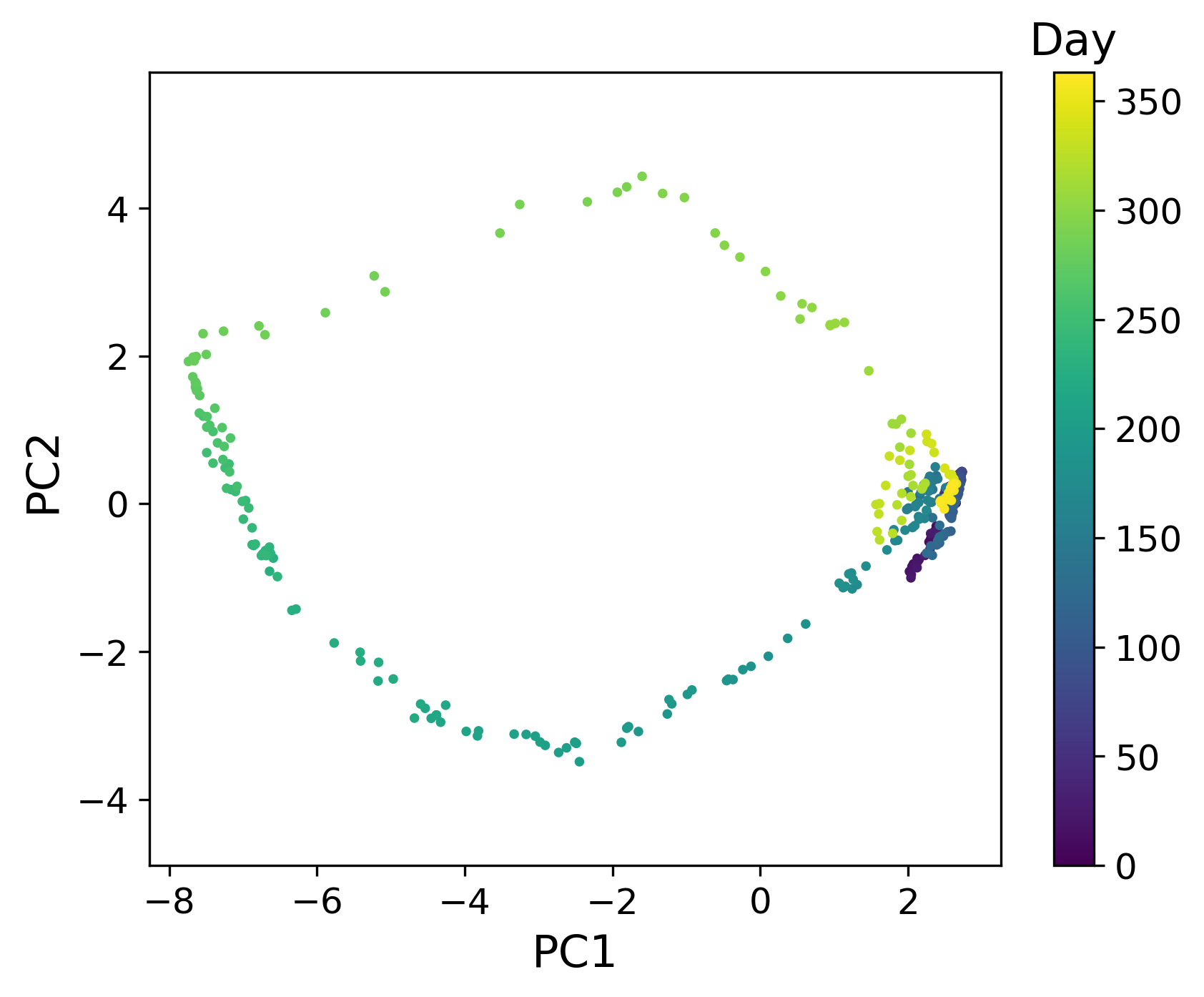}};
        \node at (3.5,-3) {\includegraphics[height=3cm]{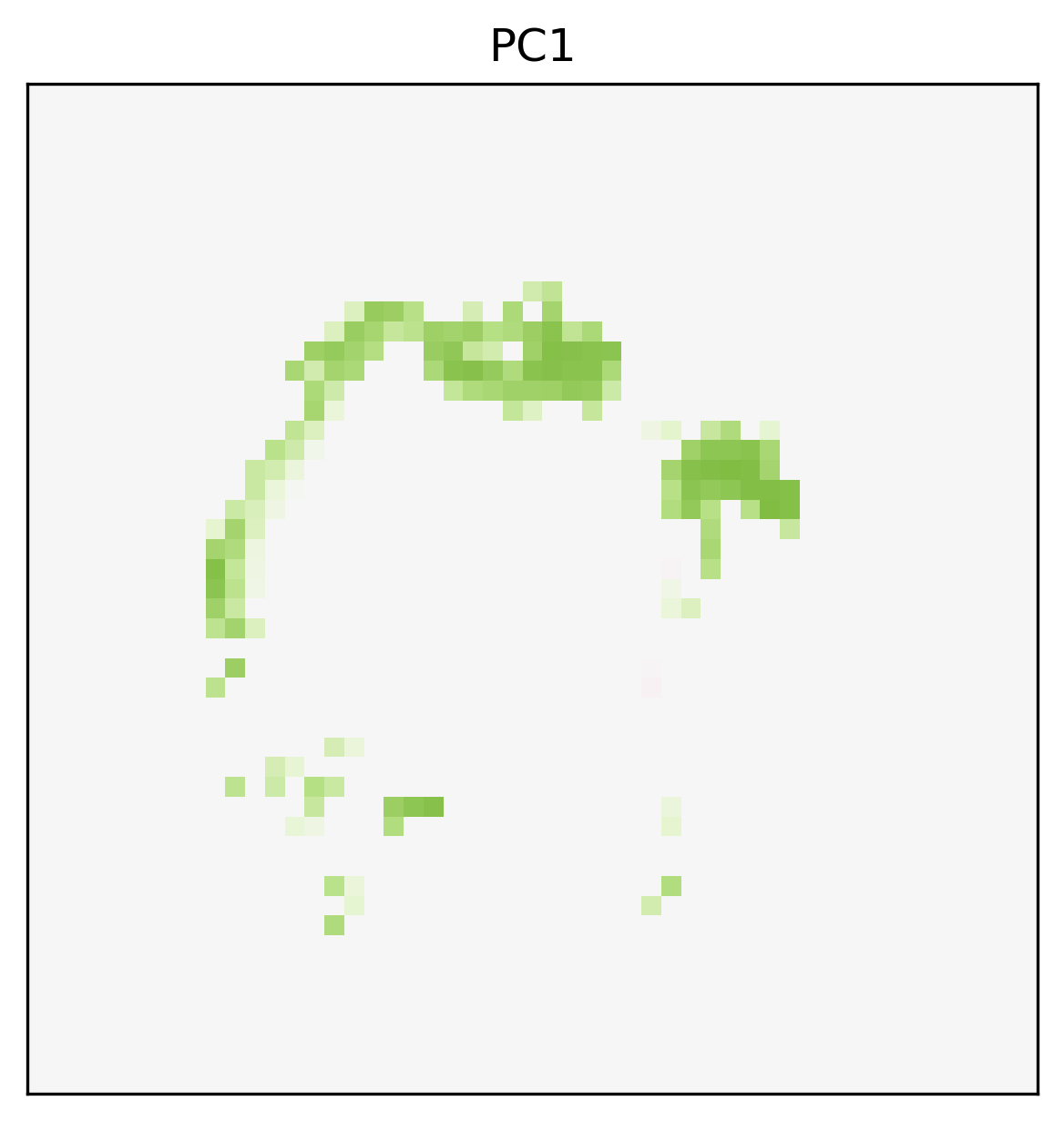}};
        \node at (7,-3) {\includegraphics[height=3cm]{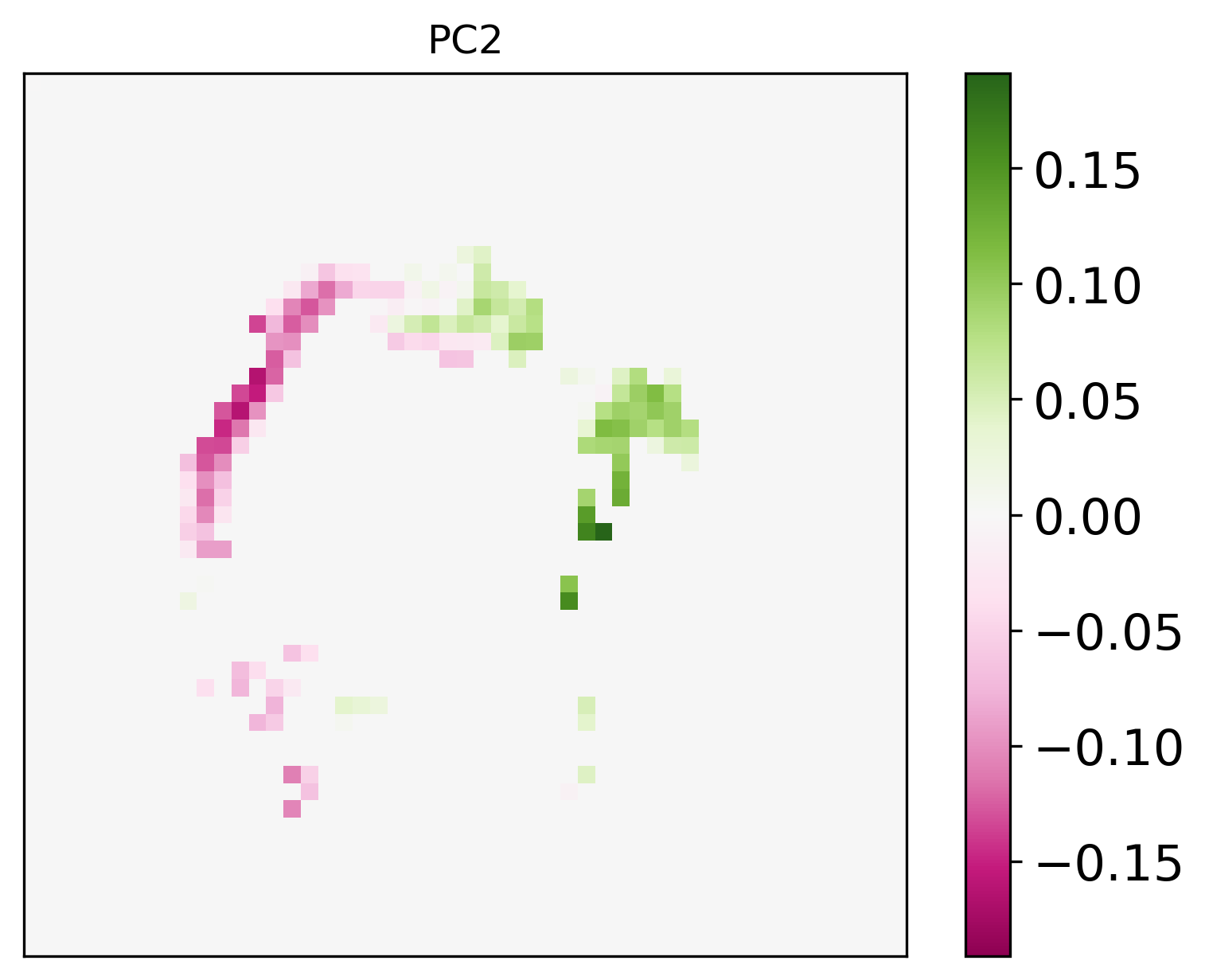}};
    \end{tikzpicture}
    
    \caption{Feature selection to reveal and explain degree-$1$ persistence in the 2006 data. We show the mask obtained by selecting pixels based on their gradients. We then show PCA plots for the raw data and masked data. The scatter plots show the projection onto the principal components. Each principal component is visualized as a heat map on the right.}
    \label{fig:arctic_pca}
\end{figure}

\section*{Acknowledgments}

We thank Ranthony A.~Clark and  Aziz Guelen for their input early on in the project. We are also grateful to Johnathan Bush for his assistance with the time series experiments, and to Zhengchao Wan for helpful discussions regarding \Cref{rmk:schur and laplacian}.

\section*{Code}

A codebase is available on github\footnote{\texttt{github.com/thomasweighill/birth-death cochains}}. Our code draws on the following TDA libraries: \texttt{gudhi}~\cite{maria2014gudhi} (including the cubical complex methods from \cite{wagner2011efficient}), \texttt{ripser}~\cite{bauer2021ripser}, \texttt{DREiMac}~\cite{perea2023dreimac}, and \texttt{persistent-cup-length}~\cite{ivshina2026persistentcuplength}. We use the MNIST loader created by Michał Dobrza\'{n}ski\footnote{\texttt{github.com/MichalDanielDobrzanski/DeepLearningPython/}}. 

\bibliographystyle{plain} 
\bibliography{main} 

\appendix

\section{Details of connection to persistent Laplacian}\label{app:psistentLaplacianproofs}

We recall the relevant notation from \Cref{sec:p laplacian} and explain \Cref{rmk:schur and laplacian}.
Let $\epsilon$ be the extension by zero map.
With respect to the standard inner product on cochains, we have the orthogonal decomposition
\[
\rmC^k(L)
=
\rmC^k(L,K)^\perp \oplus \rmC^k(L,K),
\qquad
\rmC^k(L,K)^\perp = \epsilon(\rmC^k(K)).
\]
The up Laplacian on $L$ is
\(
\Delta^{k,\mathrm{up}}_{L}
=
(\delta_L^k)^*\delta_L^k.
\)
Recall that 
\(
\partial_{k+1}^{L,K}
\)
is the restriction of $(\delta_L^k)^*$ to $\rmC^{k+1}_{L,K}$, and that the degree-$k$ persistent up Laplacian is
\(
\Delta^{k,\mathrm{up}}_{L,K}
=
\partial^{L,K}_{k+1}(\partial^{L,K}_{k+1})^*.
\)

We verify that the present setting fits the abstract framework of
\cite[Proposition~10]{gulen2023generalization}. Set
\[
\hat V:=\rmC^{k+1}(L),\qquad 
V:=\rmC^k(L),\qquad 
f:=(\delta_L^k)^*:\hat V\to V,
\qquad 
W:=\epsilon\big(\ker((\delta^{k-1}_{K})^*)\big)\subseteq V.
\]
Then \(f^*=\delta_L^k\) and
\(
f\circ f^*
= (\delta_L^k)^*\delta_L^k
= \Delta^{k,\mathrm{up}}_{L}.
\)
We verify that the preimage of \(W\) under \(f\) satisfies:
\[
f^{-1}(W)
=
\left\{\xi\in\hat V \;\middle|\; f(\xi)\in W \right\}
=
\left\{\xi\in\rmC^{k+1}(L)\;\middle|\; (\delta_L^k)^*(\xi)\in \epsilon(\ker((\delta^{k-1}_{K})^*)) \right\}
=\rmC^{k+1}_{L,K}.
\]
Let \(f_W:f^{-1}(W)\to W\) denote the restriction of \(f\) with codomain restricted
to \(W\). 
The following diagram summarizes the involved operators:
    \[
    \begin{tikzcd}[row sep=large]
        \epsilon(\rmC^k(K))
        \ar[r, two heads, "\proj_W"]
        \ar[rr, bend left=30, "(\partial^{L,K}_{k+1})^* " blue]
        &
        \epsilon\big(\ker((\delta^{k-1}_{K})^*)\big) 
        \arrow[r, " (f_W)^*"]
        \arrow[d, hook] 
        &
     \rmC^{k+1}_{L,K}  \arrow[d, hook] 
        \arrow[r, "f_W"] 
        \ar[rr, bend left=30, "\partial^{L,K}_{k+1}" blue]
      & 
      \epsilon\big(\ker((\delta^{k-1}_{K})^*)\big)
        \arrow[d, hook] 
        \arrow[r, hook, "\iota_W"]
        &  
        \epsilon(\rmC^k(K))
      \\
      &
     \rmC^k(L)
     \arrow[r, shift left=0.5ex, "\delta^{k}_{L}  = f^*"] 
     &
     \rmC^{k+1}(L) 
     \arrow[r, shift left=0.5ex, "(\delta^{k}_{L})^* = f"] 
     &\rmC^k(L). 
     &
    \end{tikzcd}
    \]
    Here $\proj_W$ denotes the orthogonal projection 
    $\epsilon(\rmC^k(K)) \to 
    W=\epsilon\big(\ker((\delta^{k-1}_{K})^*)\big)$,
    and $\iota_W$ denotes the natural inclusion 
    $W \hookrightarrow \epsilon(\rmC^k(K))$.

By \cite[Proposition~10]{gulen2023generalization}, the Schur complement of
\(f\circ f^*=\Delta^{k,\mathrm{up}}_{L}\) with respect to the orthogonal
decomposition \(\rmC^k(L)=W\oplus W^\perp\) satisfies
\begin{equation}\label{eq:sch_W}
    \mathrm{Sch}\left(\Delta^{k,\mathrm{up}}_{L},\,W\right)
=
f_W\circ (f_W)^*: W \to W.
\end{equation}

Recall that $\partial^{L,K}_{k+1}$ is defined as the restriction of $f=(\delta^{k}_{L})^*$ on $\rmC^{k+1}_{L,K}$, i.e. $\partial^{L,K}_{k+1} = \iota_W\circ f_W$. 
Taking the adjoint on both sides, we obtain $\delta^k_{L,K} =(f_W)^* \circ (\iota_W)^* =(f_W)^* \circ \proj_W $.
It follows immediately from \Cref{eq:sch_W} that
\begin{equation}\label{eq:cochain-schur}
    \Delta^{k,\mathrm{up}}_{L,K}
    =
    \partial^{L,K}_{k+1}\circ (\partial^{L,K}_{k+1})^*
    =
    \iota_W \circ f_W \circ (f_W)^* \circ \proj_W
    = \iota_W \circ \mathrm{Sch}\left(\Delta^{k,\mathrm{up}}_{L},\,W\right)\circ \proj_W.
\end{equation}

\section{Details of critical points proof}\label{app:criticalproofs}

Let $X = \{x_1, \ldots, x_n\}$ be the vertices of a regular $n$-gon, $n \geq 3$, lying in counter-clockwise order on the unit circle in $\mathbb{R}^2$. The Vietoris-Rips complexes on $X$ have already been thoroughly studied in the literature. Let $\delta = ||x_1 - x_2||_2$. For $r < \delta$, $\mathbf{VR}(X; r)$ is a set of vertices, with no homology except degree $0$. For higher $r$, the $1$-skeleton of $\mathbf{VR}(X; r)$ is the $k$-th graph power of the cycle graph with $n$ vertices for some $k$ depending on $r$, and the complex $\mathbf{VR}(X; r)$ is therefore the clique complex of this graph. 
Here, the $k$-th \emph{graph power} $G^k$ of a graph $G$ is a new graph with the same vertex set in which two vertices are connected by an edge if and only if their graph distance in $G$ is at most $k$.
Corollary 6.7 in \cite{adamaszek2013clique} contains a full classification of the clique complexes of graph powers up to homotopy equivalence. We will only need a very small piece of this result, stated in the following lemma.

\begin{lemma}[\cite{adamaszek2013clique}]\label{lem:vr}
Let $X$ be the vertices of a regular $n$-gon. Then
\begin{itemize}
    \item[(a)] there is an interval $J  = [\delta, y) $ such that $\mathbf{VR}(X; r)$ has non-trivial degree-$1$ homology if and only if $r \in J$,
    \item[(b)] for $r \in J$, $\mathbf{VR}(X; r) \simeq S^1$, so that $\rmH_1(\mathbf{VR}(X; r)) \cong \mathbb{R}$, and
    \item[(c)] for $r \in J$, the inclusion $\mathbf{VR}(X;\delta) \to \mathbf{VR}(X; r)$ is a homotopy equivalence.
\end{itemize}
\end{lemma}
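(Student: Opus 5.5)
Lemma~\ref{lem:vr} is attributed to \cite{adamaszek2013clique}, so the plan is to extract (a)--(c) from their classification of clique complexes of powers of cycle graphs rather than prove them from scratch.

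\textbf{Step 1: set up the combinatorics.} Write $\delta_j = \|x_1 - x_{1+j}\|_2 = 2\sin(\pi j/n)$ for $1 \le j \le \lfloor n/2 \rfloor$. These distances strictly increase in $j$ on this range. Hence for $\delta_k \le r < \delta_{k+1}$ the $1$-skeleton of $\mathbf{VR}(X;r)$ is exactly $C_n^k$. Since Vietoris--Rips complexes are flag complexes, $\mathbf{VR}(X;r) = \mathrm{Cl}(C_n^k)$. For $r<\delta$ the complex is discrete, and once $k \ge \lfloor n/2\rfloor$ it is a full simplex. So the filtration is a finite sequence $\mathrm{Cl}(C_n^1) \subseteq \mathrm{Cl}(C_n^2) \subseteq \cdots$, changing only at the values $\delta_k$.

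\textbf{Step 2: homotopy types.} Corollary~6.7 of \cite{adamaszek2013clique} gives $\mathrm{Cl}(C_n^k) \simeq S^{2l+1}$ when $\frac{l}{2l+1} < \frac{k}{n} < \frac{l+1}{2l+3}$, and a wedge of even spheres $S^{2l}$ at the boundary values $k/n = \frac{l}{2l+1}$. Nontrivial $\rmH_1$ occurs only in the case $l=0$, that is, when $0 < k/n < 1/3$. Taking $y = \delta_{k_0+1}$, where $k_0$ is the largest $k$ with $k/n<1/3$, gives (a) with $J=[\delta,y)$ and also (b). When $n \le 3$ no $k$ qualifies; in that case $J$ is degenerate and (a)--(c) hold vacuously, or for $n=3$ the interval is empty. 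This edge case should be remarked on.

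\textbf{Step 3: the inclusion.} This is the main obstacle, because Corollary~6.7 gives homotopy types but not the behaviour of the inclusion maps. The approach is to use the vertex-ordered cycle $\gamma = [x_1,x_2,\ldots,x_n,x_1]$, which generates $\rmH_1(\mathrm{Cl}(C_n^1)) \cong \mathbb{R}$. By (b), it suffices to show that $\gamma$ stays nonzero in $\rmH_1(\mathrm{Cl}(C_n^k))$ for $k<n/3$; then the inclusion induces an isomorphism on $\rmH_1$ of spaces homotopy equivalent to $S^1$. For this I would rely on Adamaszek's explicit result that the map $\mathrm{Cl}(C_n^k) \to S^1$ of winding type, which sends vertices to the circle and extends linearly on short arcs, is a homotopy equivalence in this range. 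Composing it with the inclusion of $\mathrm{Cl}(C_n^1) \cong S^1$ gives a degree-one map, so $\gamma$ maps to a generator.

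Alternatively, one can construct a cocycle directly: the "winding" $1$-cocycle $\theta(x_i,x_j) = (j-i \bmod n)$ centered in $(-n/2,n/2]$. It is a cocycle on triangles of $C_n^k$ when $3k<n$, since no triangle then wraps around the circle. It evaluates to $n$ on $\gamma$, so $\gamma$ is nonzero. A homotopy equivalence then follows by Whitehead's theorem, because both spaces are homotopy equivalent to $S^1$ and the map is an isomorphism on $\pi_1$ for CW complexes.
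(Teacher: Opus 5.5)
Your Steps 1 and 2 are correct and match what the paper does for (a) and (b). You identify $\mathbf{VR}(X;r)$ with $\mathrm{Cl}(C_n^k)$ for $\delta_k\le r<\delta_{k+1}$ and read off the homotopy types from Corollary 6.7 of \cite{adamaszek2013clique}. Your choice $y=\delta_{k_0+1}$ and your handling of the empty case $n=3$ are fine. For (c) the paper gives no argument of its own. It remarks that (c) is \emph{not} explicitly in \cite{adamaszek2013clique} and takes it from Proposition 4.9 of \cite{adamaszek2017vietoris}. So the first option in your Step 3, which appeals to an unspecified ``winding-type homotopy equivalence'' result of Adamaszek, is not something you can cite as stated.

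Your self-contained option has a real gap. The cocycle $\theta$ itself is fine, and the $3k<n$ argument is correct. But $\theta(\gamma)=n$ only shows $[\gamma]\neq 0$ in $\rmH_1(\cdot;\mathbb{R})$. It tells you $[\gamma]=m\,g$ for a generator $g$ of $\rmH_1(\mathrm{Cl}(C_n^k);\mathbb{Z})\cong\pi_1\cong\mathbb{Z}$ and some nonzero $m$ dividing $n$. An isomorphism on real $\rmH_1$ between two spaces homotopy equivalent to $S^1$ does not give an isomorphism on $\pi_1$; a degree-$2$ map $S^1\to S^1$ is a counterexample. So your sentence ``the map is an isomorphism on $\pi_1$'' is unsupported, and Whitehead's theorem cannot yet be applied. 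You need $m=\pm1$, and either of the following gives it:
\begin{itemize}
\item[(i)] Replace $\theta$ by $\phi=\tfrac1n(\theta-\delta^0 h)$ with $h(x_i)=i$. This is integer-valued because $\theta(x_i,x_j)\equiv j-i \pmod n$. It is still a cocycle, and $\phi(\gamma)=1$, which forces $m=\pm1$.
\item[(ii)] Each edge $x_ix_{i+j}$ with $j\le k$ is homotopic rel endpoints to the path $x_ix_{i+1}\cdots x_{i+j}$ inside the simplex $\{x_i,\dots,x_{i+j}\}$. Hence $\pi_1(C_n)\to\pi_1(\mathrm{Cl}(C_n^k))$ is surjective, and a surjection $\mathbb{Z}\to\mathbb{Z}$ is an isomorphism.
\end{itemize}
With either addition, your argument becomes a valid elementary replacement for the citation to \cite{adamaszek2017vietoris}. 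For the paper's only use of (c), in the proof of \Cref{lem:symmetry}(b), the real-coefficient statement your $\theta$ already establishes would suffice.
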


Note that part (c) in \Cref{lem:vr} is not explicitly stated in \cite{adamaszek2013clique} but we can get it from, for example, Proposition 4.9~in \cite{adamaszek2017vietoris}. 

Let $D_n$ denote the \emph{dihedral group}, i.e.\ the group of $2n$ symmetries of the regular $n$-gon $X$, consisting of $n$ rotations and $n$ reflections: a rotation by $k$ sends $x_i \mapsto x_{i+k \pmod{n}}$, and a reflection sends $x_i \mapsto x_{-i \pmod{n}}$.
The dihedral group $D_n$ acts on $X$ by isometries. Therefore, it also acts on $\mathbf{VR}(X; r)$ simplicially, and hence on cohomology as well. For a group element $g \in D_n$, we abuse notation to denote the action of $g$ on cochains and on cohomology by $g^\ast$.

\begin{lemma}\label{lem:symmetry}
    Let $X$ be the vertices of a regular $n$-gon. Then
    \begin{itemize}
        \item[(a)] The Vietoris-Rips persistence diagram of $X$ has a single bar $[b,d)$ in dimension $1$. 
        \item[(b)] At every $r \in [b,d)$, we can choose a representative cocycle $\alpha $ generating $\rmH^1(\mathbf{VR}(X; r))$ such that: for any $g \in D_n$,
    \begin{equation}\label{eq:Dninvariance}
        g^\ast \alpha = (-1)^{\mathsf{sgn(g)}} \alpha,
    \end{equation}
    where $\mathsf{sgn(g)}$ is $1$ (resp. $-1$) if $g$ is orientation-preserving (resp. orientation-reversing).
    \item[(c)]  
    Let $\epsilon > 0$ be arbitrarily small.
    If $\birthcochain $ and $\omega$  are $\varepsilon$-birth and $\varepsilon$-death cochains for the persistent cohomology class generating the bar $[b,d)$, then $\birthcochain $ and $\omega$ satisfy: for any $g \in D_n$,
    \begin{align*}
        g^\ast \birthcochain  = &(-1)^{\mathsf{sgn(g)}} \birthcochain  \\
        g^\ast \omega = & (-1)^{\mathsf{sgn(g)}} \omega.
    \end{align*}
            \end{itemize}
\end{lemma}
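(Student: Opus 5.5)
For part (a), I will read off the barcode from \Cref{lem:vr}. By (a) and (b) there, $\rmH^1(\mathbf{VR}(X;r))$ is one-dimensional exactly for $r \in J = [\delta, y)$ and zero otherwise. By (c), the inclusion $\mathbf{VR}(X;\delta) \to \mathbf{VR}(X;r)$ is a homotopy equivalence, so every structure map inside $J$ is an isomorphism. Hence the degree-$1$ persistence module is the interval module on $[\delta, y)$. This gives a single bar with $b = \delta$ and $d = y$.

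For part (b), I will build an explicit equivariant cocycle. Fix $r \in [b,d)$ and take the "angle" cochain
\[
\alpha(\{x_i, x_j\}) = \theta(x_i, x_j),
\]
the signed angular length (in $(-\pi,\pi)$) of the short counterclockwise arc from $x_i$ to $x_j$, oriented by vertex order $i<j$. Any edge of $\mathbf{VR}(X;r)$ with $r<d$ joins vertices less than half the polygon apart, so the short arc is well defined. On each triangle the three signed arcs sum to $0$, because every clique of the graph power lies within an arc of length less than $\pi$. So $\alpha$ is a cocycle. It pairs to $2\pi$ with the boundary cycle $x_1x_2\cdots x_n$, and that cycle generates $\rmH_1 \cong \mathbb{R}$, so $\alpha$ generates $\rmH^1$. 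Rotations preserve signed arcs, while reflections negate them. Tracking edge orientations, this gives $g^*\alpha = \pm\alpha$ with sign equal to the orientation character, which is \eqref{eq:Dninvariance}. Here the notation $(-1)^{\mathsf{sgn}(g)}$ is to be read as the character $\mathsf{sgn}(g)$ itself. I will verify the orientation bookkeeping only for one rotation and one reflection, since these generate $D_n$.

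For part (c), I will use uniqueness and equivariance of the optimization problems. Each $g\in D_n$ acts on every $X_t = \mathbf{VR}(X;t)$ by a simplicial automorphism. This action commutes with restriction to subcomplexes and with $\delta$, and it preserves the $\ell_2$ inner product, since it permutes simplices up to sign. By (b), the class $c_{b+\varepsilon}$ has a representative with $g^*\alpha = \mathsf{sgn}(g)\alpha$, so $g^*$ maps the class to $\mathsf{sgn}(g)$ times itself. Therefore $g^*$ maps the feasible set of \Cref{def:birth-cochain} for $c$ bijectively onto the feasible set for $\mathsf{sgn}(g)c$ while preserving norms. By \Cref{rem:uniqueness}, $g^*\eta$ is the birth cochain of $\mathsf{sgn}(g)c$. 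By \Cref{rem:invariance}, that cochain is $\mathsf{sgn}(g)\eta$.

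The death cochain is handled the same way. Using \Cref{prop:death cochain}, $\omega$ is the unique $\ell_2$-minimizer in the relative class $\lambda^\#([\beta])$. The connecting map $\lambda^\#$ is natural with respect to the simplicial automorphisms $g$, so $g^*$ carries this class to $\mathsf{sgn}(g)\lambda^\#([\beta])$. Uniqueness and homogeneity then give $g^*\omega = \mathsf{sgn}(g)\omega$.

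The main obstacle is part (b), specifically checking that the angle cochain really is a cocycle on the whole of $\mathbf{VR}(X;r)$ for $r<d$. This requires that every simplex span an arc shorter than $\pi$. If that fails near $d$, my fallback is to average any generating cocycle over $D_n$ against the character $\mathsf{sgn}$. Since $\rmH^1$ is one-dimensional and $D_n$ acts on it by $\pm1$, I then need to check that the action is $\mathsf{sgn}$ and not trivial. That follows from how rotations and reflections act on the fundamental cycle $x_1\cdots x_n$: rotations fix it, reflections reverse it.
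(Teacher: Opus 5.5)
Your parts (a) and (c) follow the paper. In (a) you read the single bar off \Cref{lem:vr}. In (c) you use the same uniqueness-plus-symmetry argument on the feasible sets. For the death cochain you work with the relative class $\lambda^{\#}([\beta]_K)$ of \Cref{prop:death cochain} and naturality of $\lambda^{\#}$, whereas the paper checks directly that $g^\ast$ maps the admissible $\delta^1\beta'$ to themselves up to sign. The two are equivalent, and yours is slightly cleaner because the feasible set is then visibly a coset of $\rmB^{2}(L,K)$. Your reading of $(-1)^{\mathsf{sgn}(g)}$ as the character $\mathsf{sgn}(g)$ itself is the intended one.

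For (b), the paper's proof is exactly your fallback. It takes an arbitrary generating cocycle, averages $\mathsf{sgn}(g)\,g^\ast\alpha$ over $D_n$, and uses only that classes are detected by $\nu$ and that $g_\ast\nu=\mathsf{sgn}(g)\nu$. Your angle cochain is a genuinely different, explicit route. It is one geometric formula that represents the class at every $r\in[b,d)$ simultaneously and is compatible with restriction. Its equivariance is read off from the fact that rotations preserve signed arcs and reflections negate them. The price is that it needs sharper information about where the bar dies than \Cref{lem:vr} provides; the averaging argument needs none.

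That is where your justification has a hole. Knowing that edges join vertices ``less than half the polygon apart'' does not give the cocycle condition. In the hexagon with edges up to cyclic distance $2<3$, the triangle $x_0x_2x_4$ is present, is not contained in any semicircle, and its signed short arcs sum to $2\pi$. What you actually need is that for $r<d$ every edge has cyclic distance $k<n/3$. This follows from \Cref{lem:vr}:
\begin{itemize}
\item Suppose $3k\ge n$, and choose positive integers $a,b,c\le k$ with $a+b+c=n$.
\item Then $x_0,x_a,x_{a+b}$ span a triangle of $\mathbf{VR}(X;r)$.
\item Each stretch of $\nu$ between consecutive vertices of this triangle is homologous to the corresponding edge via a fan of triangles, e.g.\ $x_0x_jx_{j+1}$ for $1\le j<a$. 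All of these lie in the complex because it is a flag complex and all index differences are at most $k$.
\item So $\nu$ is a boundary. This contradicts \Cref{lem:vr}(c), since $\nu$ generates $\rmH_1(\mathbf{VR}(X;\delta))$ and the inclusion into $\mathbf{VR}(X;r)$ is a homotopy equivalence for $r\in J$.
\end{itemize}
Given $k<n/3$, any triangle has one of its three cyclic gaps at least $n/3>k$. Since the two endpoints of that gap are joined by an edge, the complementary arc has at most $k$ steps. So the triangle lies in an arc of angle less than $2\pi/3$, and the signed arcs cancel. With this lemma inserted, or by simply using your fallback, the proof is complete.
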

\begin{proof}
(a): Follows from \Cref{lem:vr}.

(b): For $r \in [b,d)$, the map $\mathbf{VR}(X; b) \to \mathbf{VR}(X; r)$ is a homotopy equivalence by \Cref{lem:vr}. It is an easy exercise in cohomology that the cohomology classes for $\mathbf{VR}(X; b)$, which is cycle graph, are determined by their evaluation on the $1$-chain $\nu = (x_1, x_2) + (x_2, x_3) + \cdots + (x_n, x_1)$. Note that for any $g \in D_n$, $g_\ast \nu = (-1)^{\mathsf{sgn}(g)} \nu$ where $g_\ast$ is the action of $g$ on chains. If $\alpha$ is a degree-$1$ representative cocycle at $r$, then 
    $$
    \birthcochain  := \frac{1}{n}\sum_{g \in D_n} (-1)^{\mathsf{sgn(g)}} g^\ast \alpha
    $$
    is a cocycle such that $\birthcochain (\nu) = \alpha(\nu)$, so that $[\alpha] = [\birthcochain ]$. It also satisfies \Cref{eq:Dninvariance} by construction.

    (c): Let $\beta \in \rmC^1(\mathbf{VR}(X; d-\epsilon))$ be the representative cocycle satisfying conditions in (b), and $\alpha$ be the restriction of $\beta$ to $\mathbf{VR}(X; b+\epsilon)$.
    Then $\alpha$ is born between $X_{b-\varepsilon}$ and $X_{b+\varepsilon}$, and $\beta$ dies between $X_{d-\varepsilon}$ to $X_{d+\epsilon}$.

    Let $g \in D_n$. The map $g^\ast$ on cohomology preserves the $\ell^2$ norm on cochains since it permutes the simplices. Since $g$ is a simplicial map it is a standard fact that $g^\ast$ commutes with the coboundary operator. 
    By \cref{def:epsiloncochain}, $\eta$ is the birth cochain of $\alpha$ and $\omega$ is the death cochain of $\beta$.
    
    Thus, they are the unique minimizer of the optimization problems with feasible set
    \[
    V_{\mathrm{birth}} 
    := \left\{\, \alpha' \in \rmC^1(X_{b+\epsilon}) \;\middle|\; \alpha'|_{X_{b-\varepsilon}} = 0,\; \alpha' \in [\alpha]_{b+\epsilon} \,\right\}
    \]  
    and
    \[
    V_{\mathrm{death}} 
    := \left\{\, \omega\in \rmC^{2}(X_{s}) \;\middle|\; \omega=\delta^1_{X_{d+\varepsilon}} \beta',\; [\beta'|_{X_{d-\epsilon}}]=[\beta]_{d-\epsilon} \,\right\}
    \]
    respectively, and objective function $F(\cdot) = \|\cdot\|_2^2$. We therefore wish to understand the action of $D_n$ on these feasible sets. Let $\alpha' \in V_{\mathrm{birth}}$ and $g \in D_n$. Since $X_{b-\varepsilon}$ is closed under the action of $D_n$, $g^\ast \alpha'\vert_{X_{b-\varepsilon}} = 0$ as required. Since $g$ sends coboundaries to coboundaries, we have that 
        $$
        [g^\ast \cdot \alpha'] = [g^\ast \cdot \alpha] = (-1)^{\mathsf{sgn(g)}} [\alpha]
        $$
        Similarly, if $\omega = \delta^2_{X_{d+\varepsilon}} \beta' \in V_{\mathrm{death}}$ with $\beta'|_{X_{d-\epsilon}} \in [\beta]_{d-\epsilon}$, then $g^\ast \cdot \omega$ is the coboundary of $g^\ast \cdot \beta'$ and 
        $$
        [(g^\ast \cdot \beta')\vert_{X_{d-\varepsilon}}]
        = [g^\ast \cdot (\beta'\vert_{X_{d-\varepsilon}})]     
        = [g^\ast \cdot \beta]_{d-\varepsilon} = (-1)^{\mathsf{sgn(g)}} [\beta]_{d-\epsilon}
        $$
        
     We have thus shown that the effect of $g$ on the feasible sets is either to fix them set-wise or to multiply each vector by $-1$, depending on the sign of $g$. Therefore, if $\eta$ is the unique minimizer (\Cref{rem:uniqueness}) for the birth problem, then by uniqueness we must have that $(-1)^{\mathsf{sgn(g)}} \eta$ is also optimal, and hence must be equal to $\eta$ as required. 
     A similar discussion holds for the death cochain $\omega$.
\end{proof}

\section{Supplementary Material}

\subsection{$H_1$ optimization runs with different learning rates}

\begin{figure}
    \centering

    \begin{subfigure}{\textwidth}
    \centering
        \begin{tikzpicture}
        \tikzmath{\eps = 0.030;}
            \node at (0,0) {\includegraphics[width=5cm]{pointmover/death_minus_birth_harmonic_epsilon\eps .png}};
            \node at (4,0) {\includegraphics[width=2.25cm]{pointmover/point_movement_harmonic_gamma0.010_epsilon\eps .png}};
            \node at (6.25,0) {\includegraphics[width=2.25cm]{pointmover/point_movement_harmonic_gamma0.020_epsilon\eps .png}};
            \node at (8.5,0) {\includegraphics[width=2.25cm]{pointmover/point_movement_harmonic_gamma0.030_epsilon\eps .png}};
            \node at (10.75,0) {\includegraphics[width=2.25cm]{pointmover/point_movement_harmonic_gamma0.040_epsilon\eps .png}};
        \end{tikzpicture}
        \caption{Cochains, $\varepsilon_0 = 0.03$}
    \end{subfigure}

    \begin{subfigure}{\textwidth}
        \centering
        \begin{tikzpicture}
        \tikzmath{\eps = 0.040;}
            \node at (0,0) {\includegraphics[width=5cm]{pointmover/death_minus_birth_harmonic_epsilon\eps .png}};
            \node at (4,0) {\includegraphics[width=2.25cm]{pointmover/point_movement_harmonic_gamma0.010_epsilon\eps .png}};
            \node at (6.25,0) {\includegraphics[width=2.25cm]{pointmover/point_movement_harmonic_gamma0.020_epsilon\eps .png}};
            \node at (8.5,0) {\includegraphics[width=2.25cm]{pointmover/point_movement_harmonic_gamma0.030_epsilon\eps .png}};
            \node at (10.75,0) {\includegraphics[width=2.25cm]{pointmover/point_movement_harmonic_gamma0.040_epsilon\eps .png}};
        \end{tikzpicture}
        \caption{Cochains, $\varepsilon_0 = 0.04$}
    \end{subfigure}

    \begin{subfigure}{\textwidth}
        \centering
        \begin{tikzpicture}
        \tikzmath{\eps = 0.050;}
            \node at (0,0) {\includegraphics[width=5cm]{pointmover/death_minus_birth_harmonic_epsilon\eps .png}};
            \node at (4,0) {\includegraphics[width=2.25cm]{pointmover/point_movement_harmonic_gamma0.010_epsilon\eps .png}};
            \node at (6.25,0) {\includegraphics[width=2.25cm]{pointmover/point_movement_harmonic_gamma0.020_epsilon\eps .png}};
            \node at (8.5,0) {\includegraphics[width=2.25cm]{pointmover/point_movement_harmonic_gamma0.030_epsilon\eps .png}};
            \node at (10.75,0) {\includegraphics[width=2.25cm]{pointmover/point_movement_harmonic_gamma0.040_epsilon\eps .png}};
        \end{tikzpicture}
        \caption{Cochains, $\varepsilon_0 = 0.05$}
    \end{subfigure}

    \begin{subfigure}{\textwidth}
        \centering
        \begin{tikzpicture}
        \tikzmath{\eps = 0.060;}
            \node at (0,0) {\includegraphics[width=5cm]{pointmover/death_minus_birth_harmonic_epsilon\eps .png}};
            \node at (4,0) {\includegraphics[width=2.25cm]{pointmover/point_movement_harmonic_gamma0.010_epsilon\eps .png}};
            \node at (6.25,0) {\includegraphics[width=2.25cm]{pointmover/point_movement_harmonic_gamma0.020_epsilon\eps .png}};
            \node at (8.5,0) {\includegraphics[width=2.25cm]{pointmover/point_movement_harmonic_gamma0.030_epsilon\eps .png}};
            \node at (10.75,0) {\includegraphics[width=2.25cm]{pointmover/point_movement_harmonic_gamma0.040_epsilon\eps .png}};
        \end{tikzpicture}
        \caption{Cochains, $\varepsilon_0 = 0.06$}
    \end{subfigure}

    \begin{subfigure}{\textwidth}
        \centering
        \begin{tikzpicture}
            \node at (0,0) {\includegraphics[width=5cm]{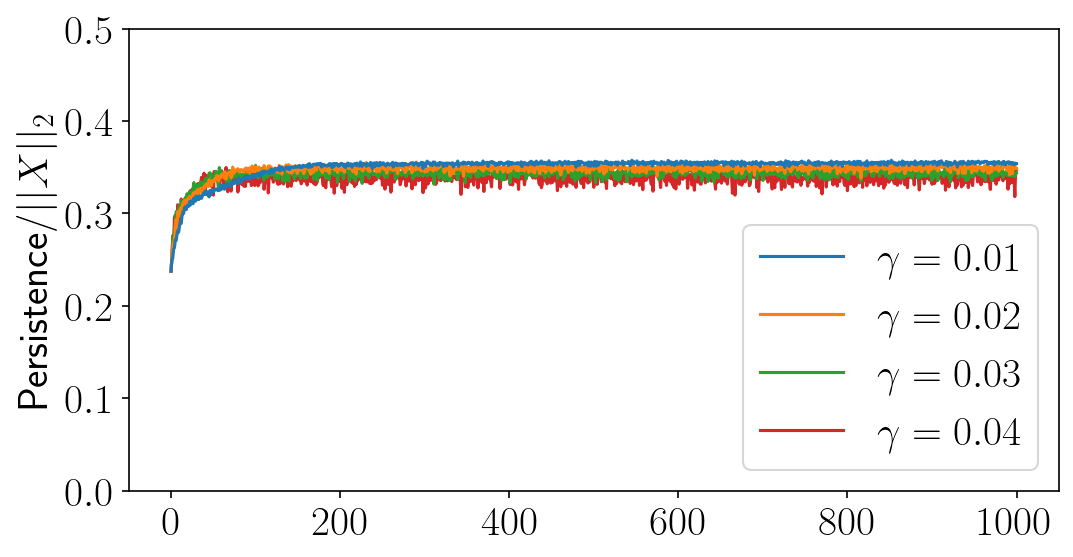}};
            \node at (4,0) {\includegraphics[width=2.25cm]{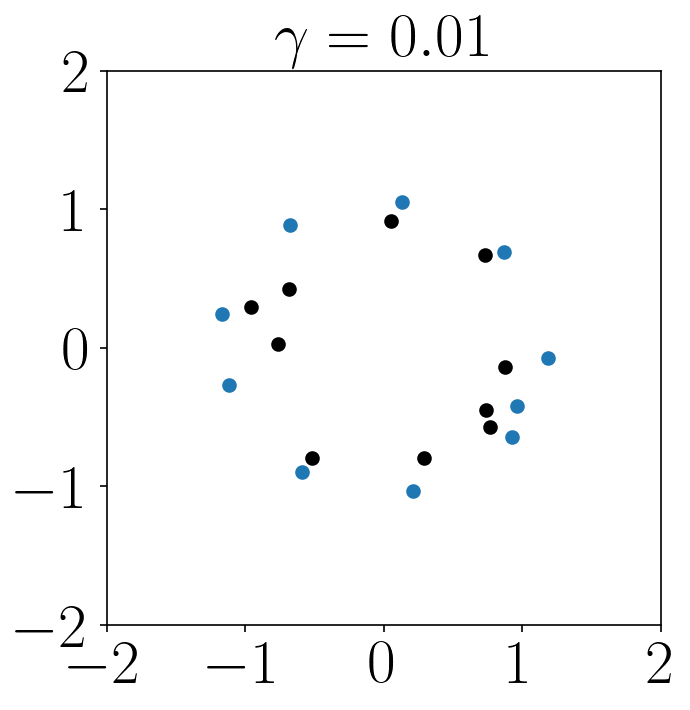}};
            \node at (6.25,0) {\includegraphics[width=2.25cm]{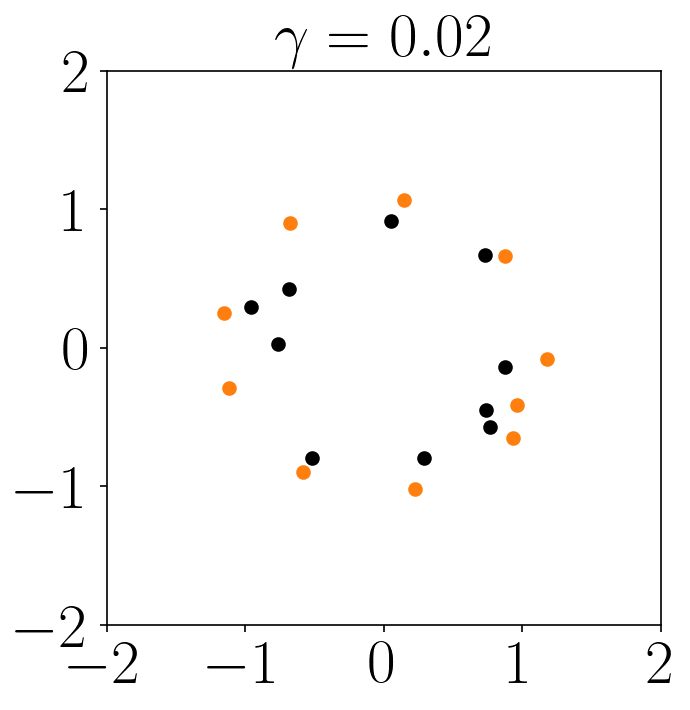}};
            \node at (8.5,0) {\includegraphics[width=2.25cm]{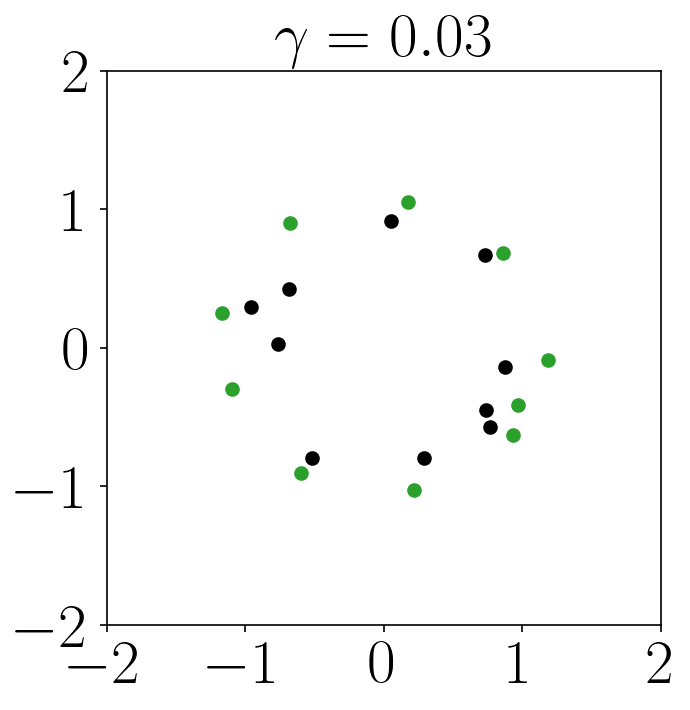}};
            \node at (10.75,0) {\includegraphics[width=2.25cm]{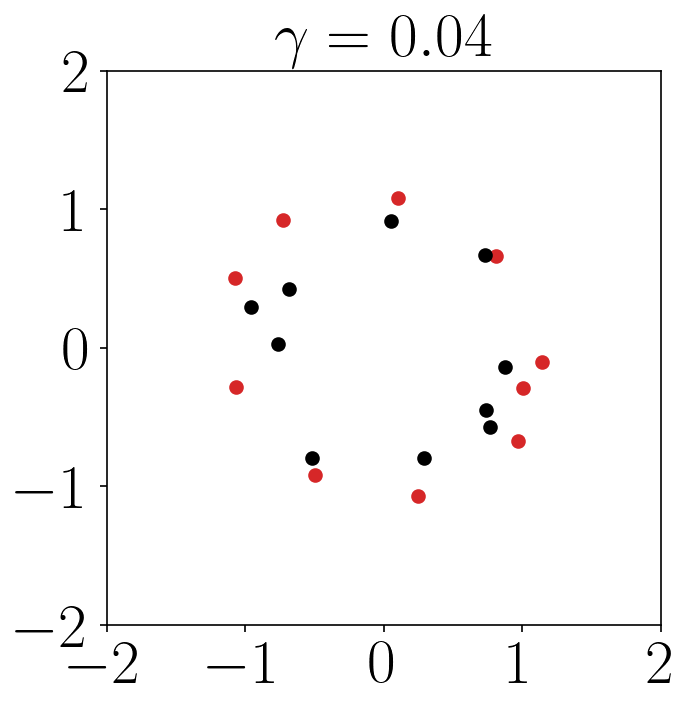}};
        \end{tikzpicture}
        \caption{Simplices}
    \end{subfigure}
    
    \caption{Maximizing an $\rmH_1$ feature using either birth and death cochains, or birth and death simplices. We show how the persistence changes over gradient descent iterations (left). We also show the initial point cloud (in black) and final point cloud for each scenario (right).}
\end{figure}

\subsection{Random trials on other MNIST digits}

\begin{figure}
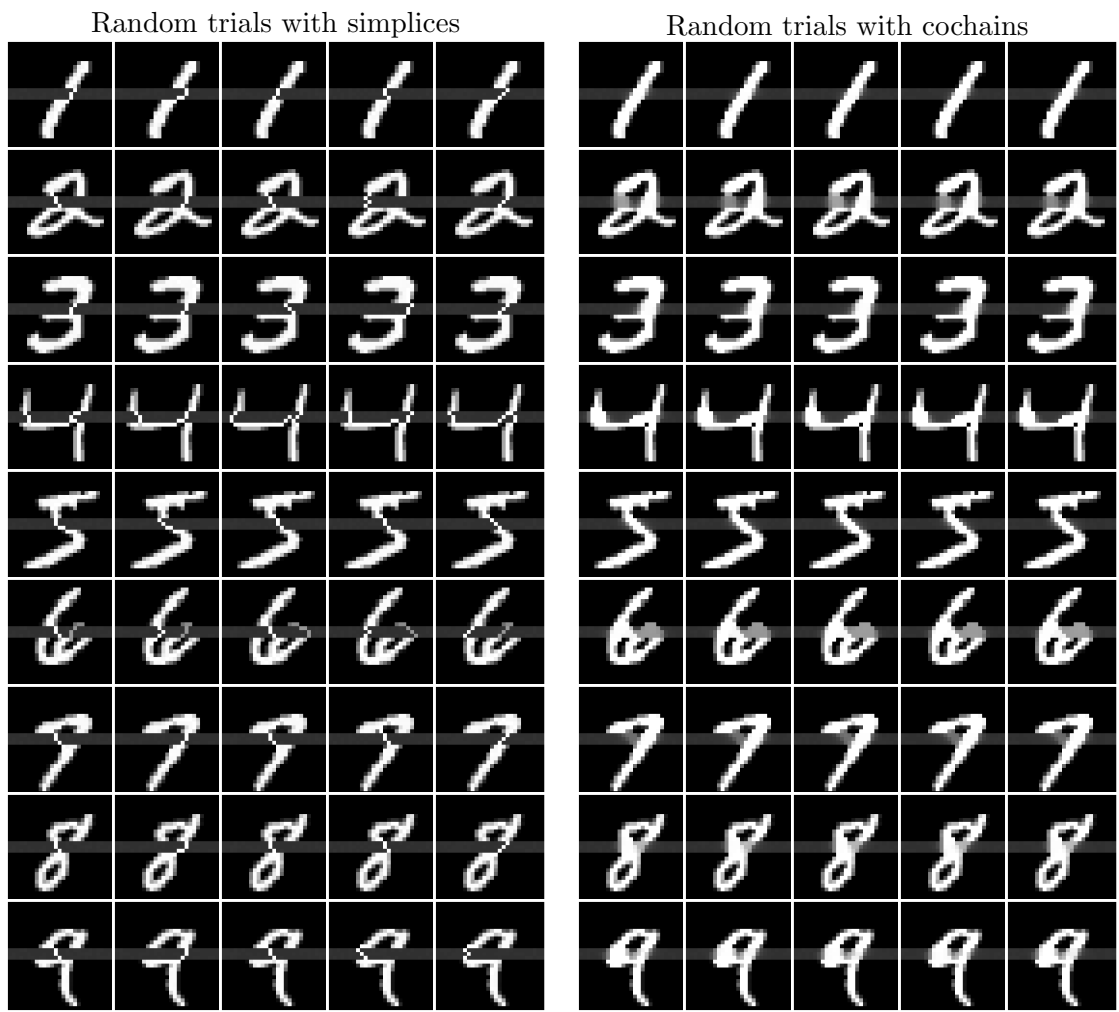

    \centering
    \begin{tikzpicture}[scale=0.95]
    \node at (4.5, -0.5) {Random trials with simplices};
        \node at (12.5, -0.5) {Random trials with cochains};
    \foreach \s in {2,3,4,5,6,7,8,9,10}{
        \foreach \i in {1,2,3,4,5} {
        \node at (1.5*\i, -1.5*\s+1.5) {
        \includegraphics[width=1.5cm]{imageshader/MNIST_simplices_sample\s_trial\i _final.png}
        };
        }
        \foreach \i in {1,2,3,4,5} {
        \node at (1.5*\i+8, -1.5*\s+1.5) {
        \includegraphics[width=1.5cm]{imageshader/MNIST_cochains_sample\s_trial\i _final.png}
        };
        }
    }
    \end{tikzpicture}
    \caption{Five random trials of the digit repair task for other digits, with slightly different noise each time.}
\end{figure}

\subsection{Arctic ice analysis for other years} We run the Arctic ice analysis feature selection method for years other than 2006. We exclude 2009, however, due to incomplete data.

\begin{figure}
    \centering
    \includegraphics[width=0.19\linewidth]{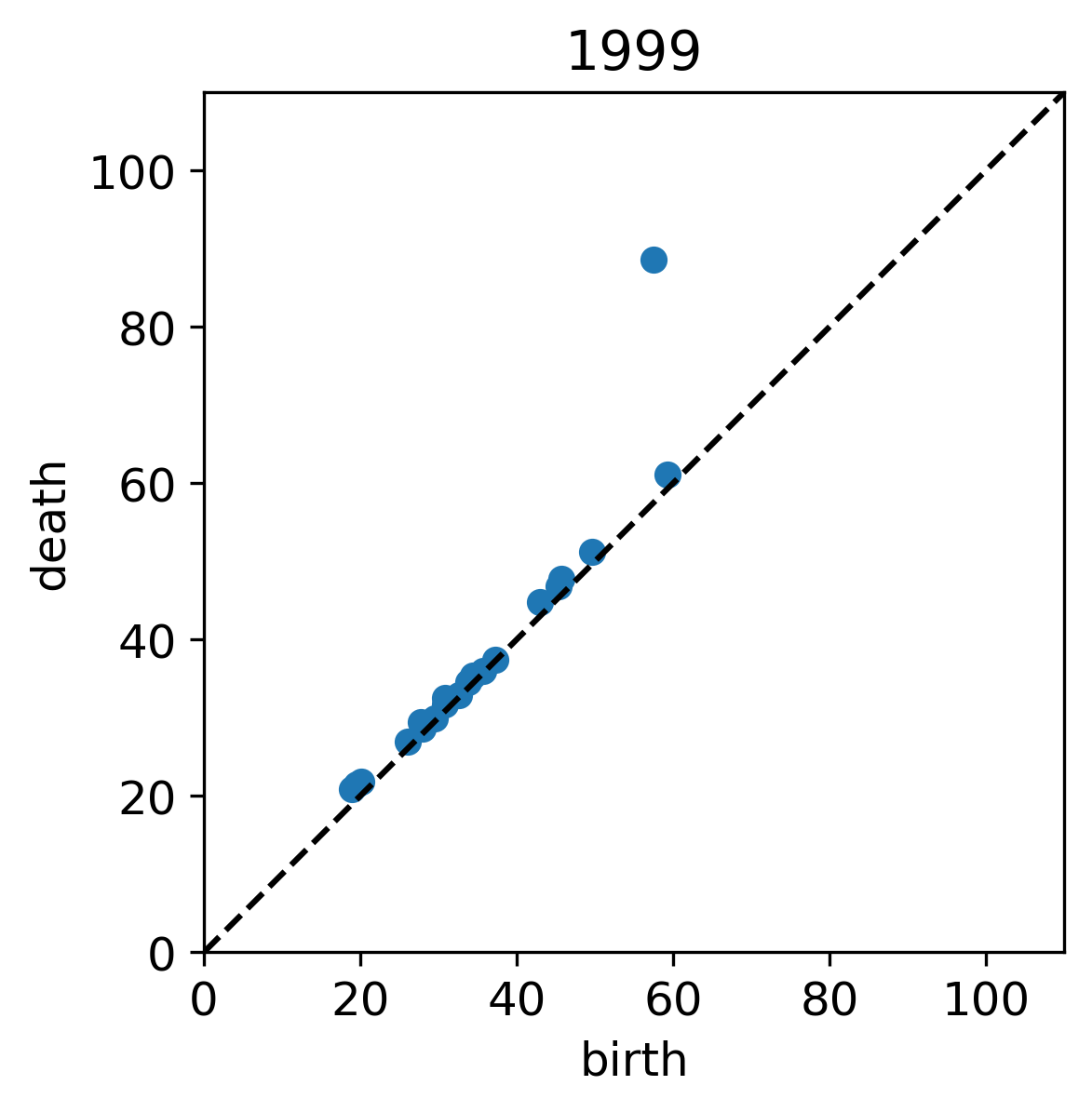}%
    \includegraphics[width=0.19\linewidth]{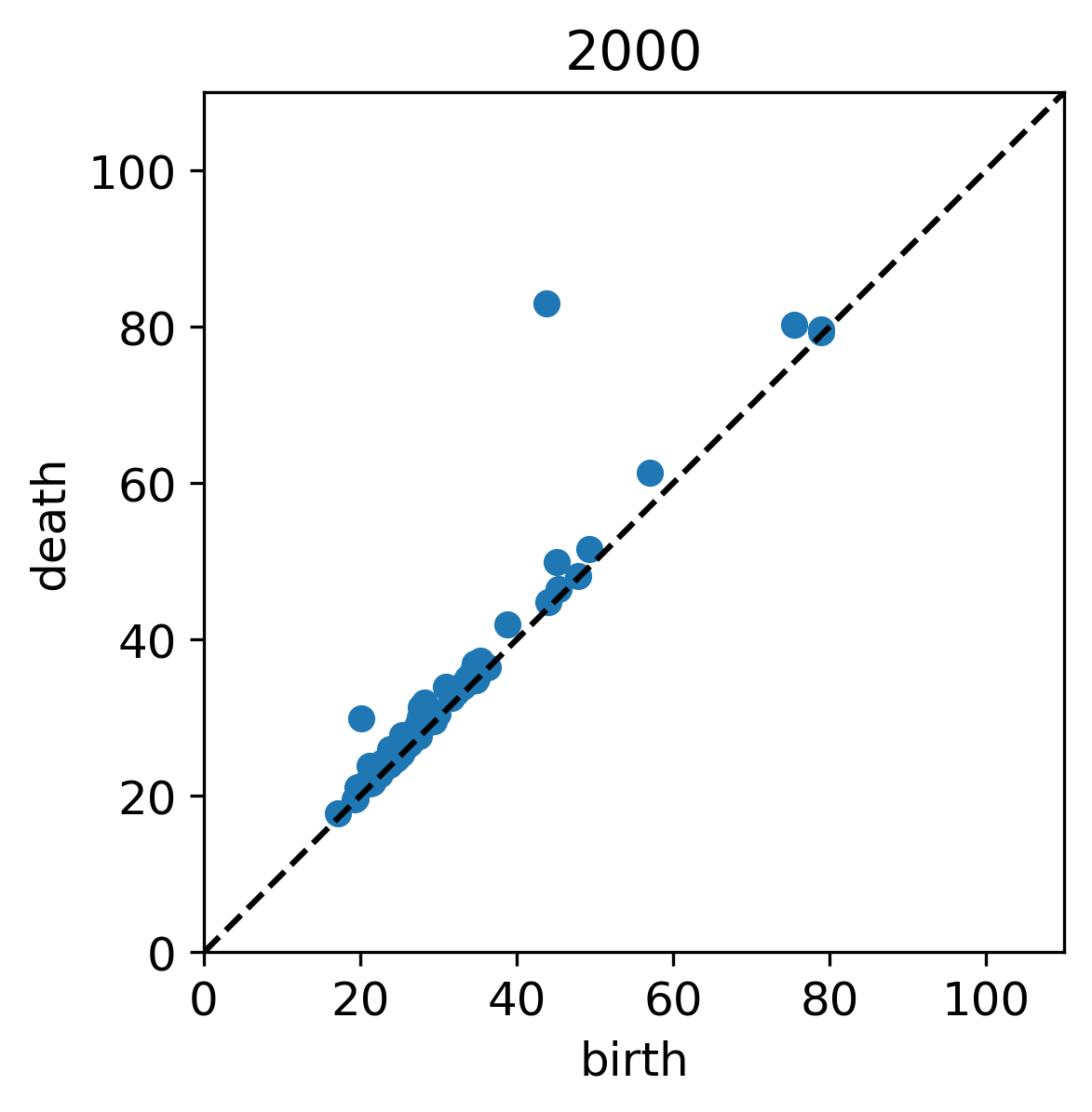}%
    \includegraphics[width=0.19\linewidth]{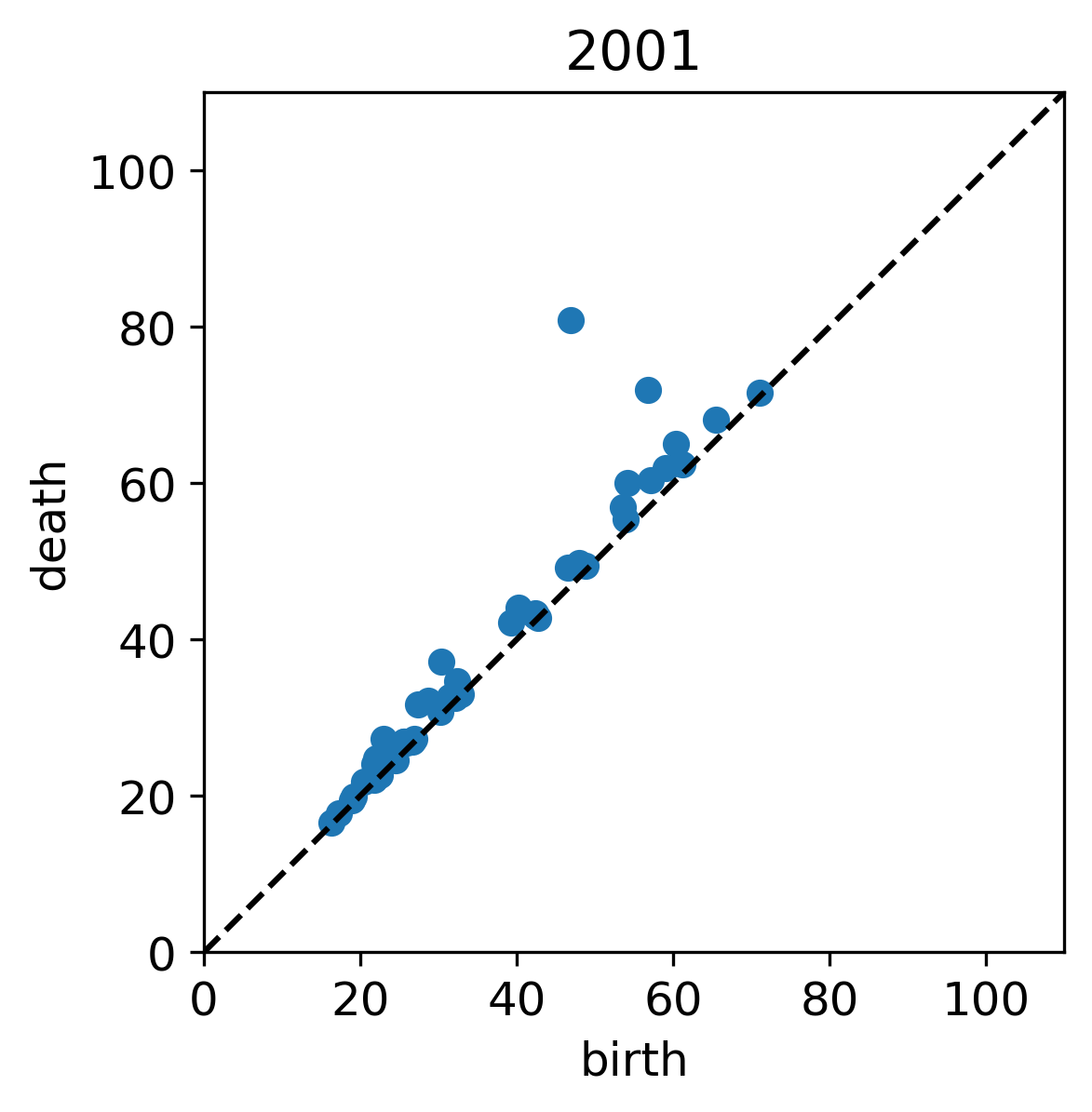}%
    \includegraphics[width=0.19\linewidth]{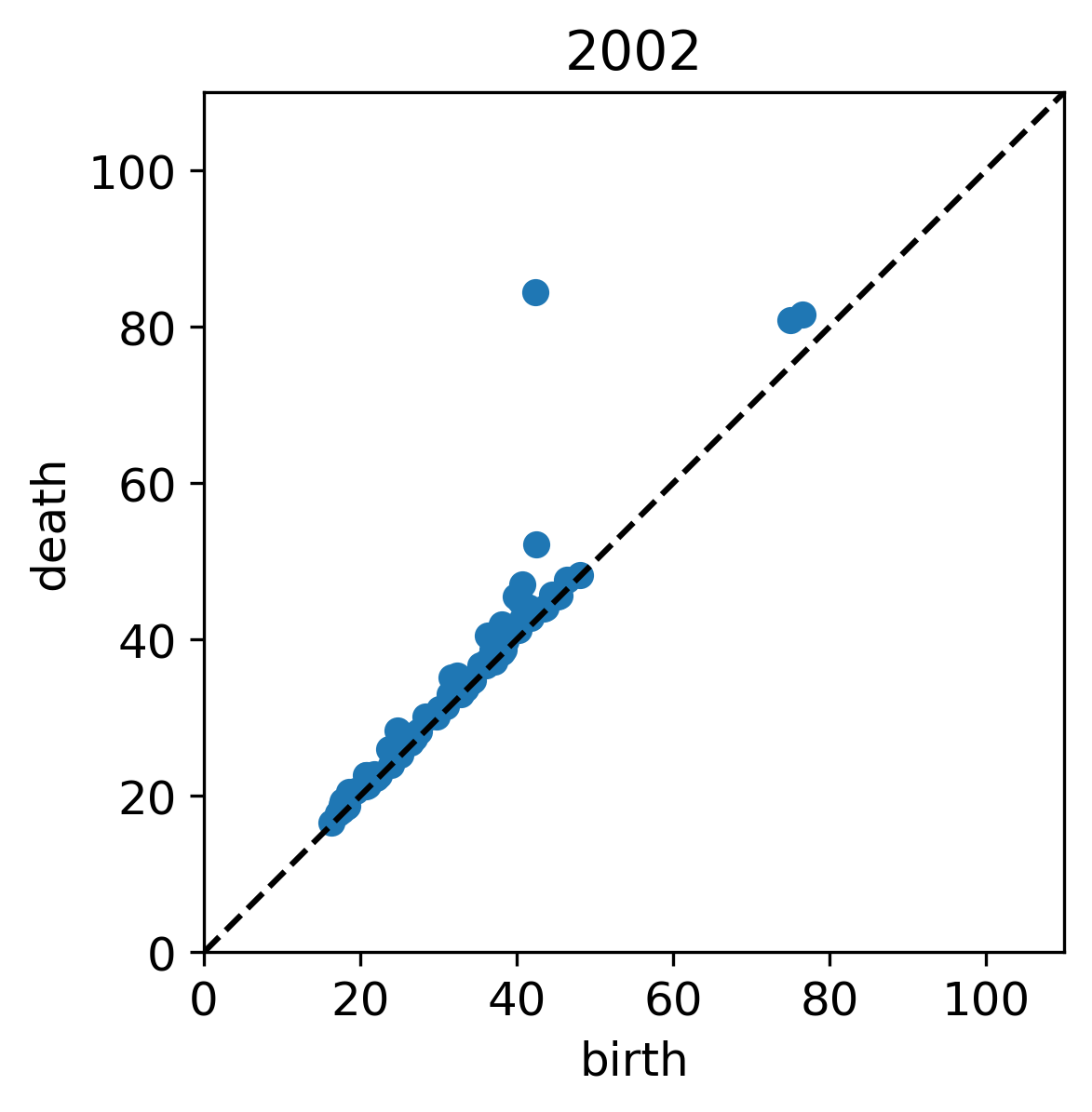}%
    \includegraphics[width=0.19\linewidth]{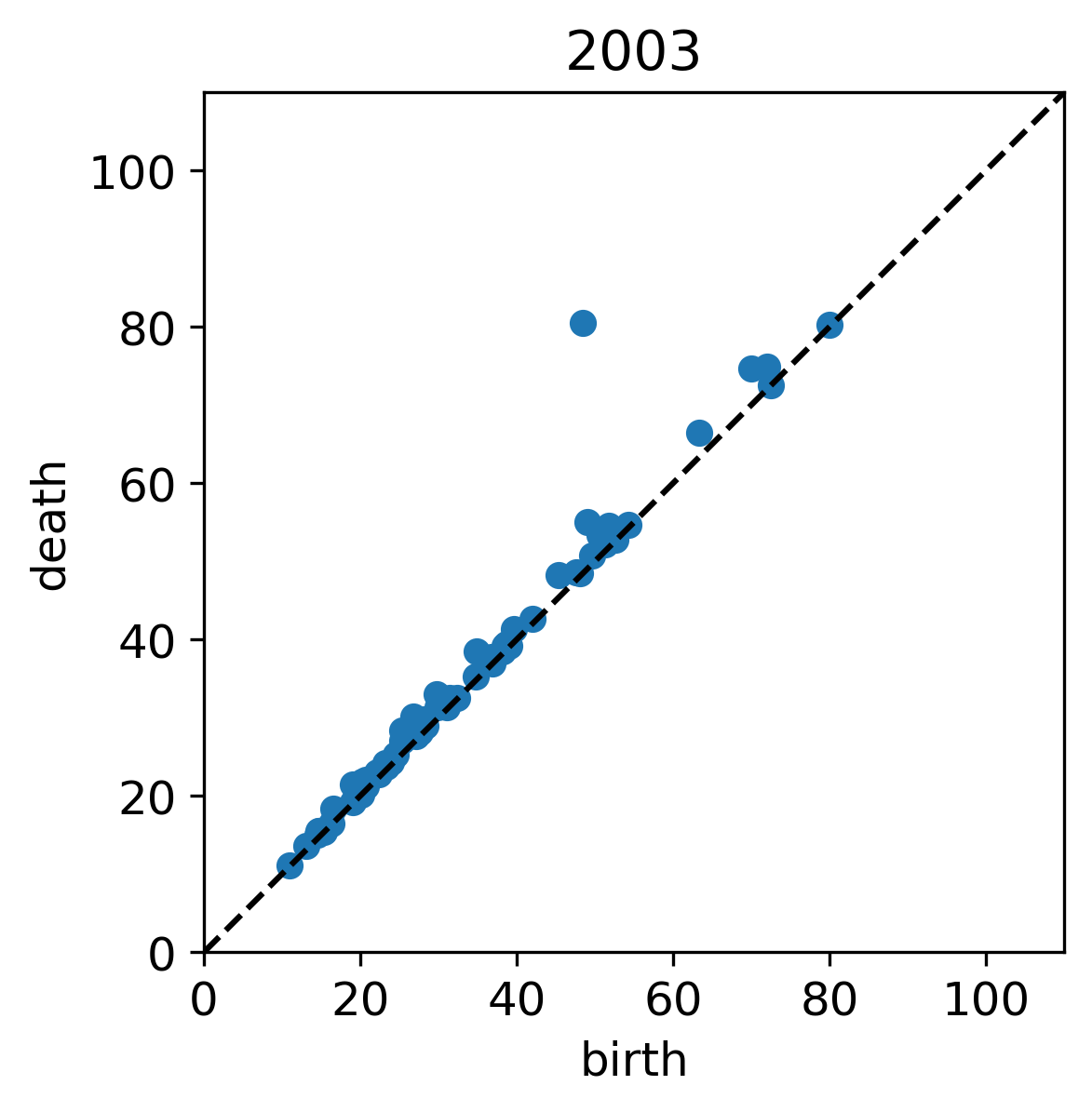}%
    
    \includegraphics[width=0.19\linewidth]{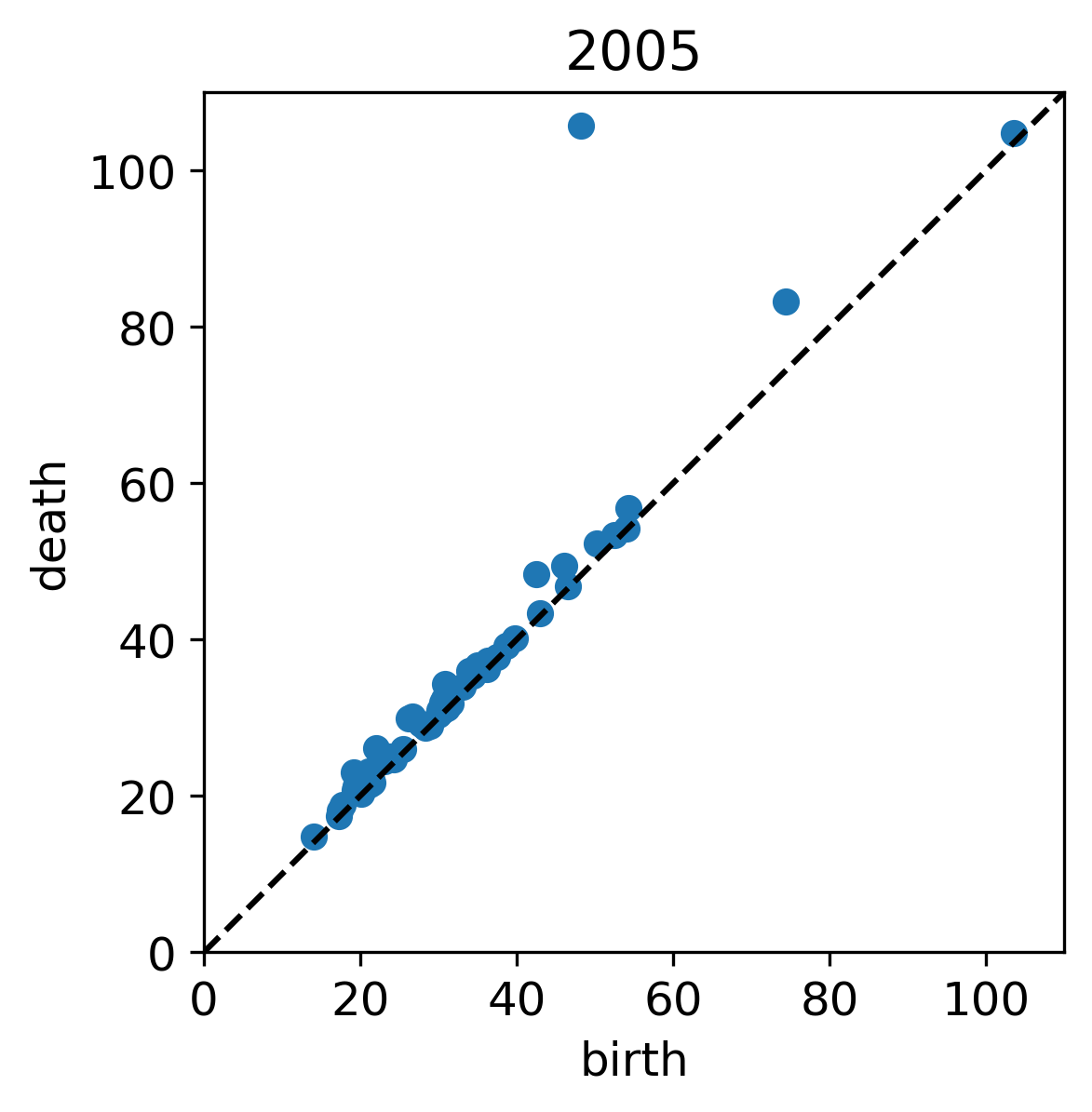}%
    \includegraphics[width=0.19\linewidth]{arctic/ice_eggs_2006_PD.png}%
    \includegraphics[width=0.19\linewidth]{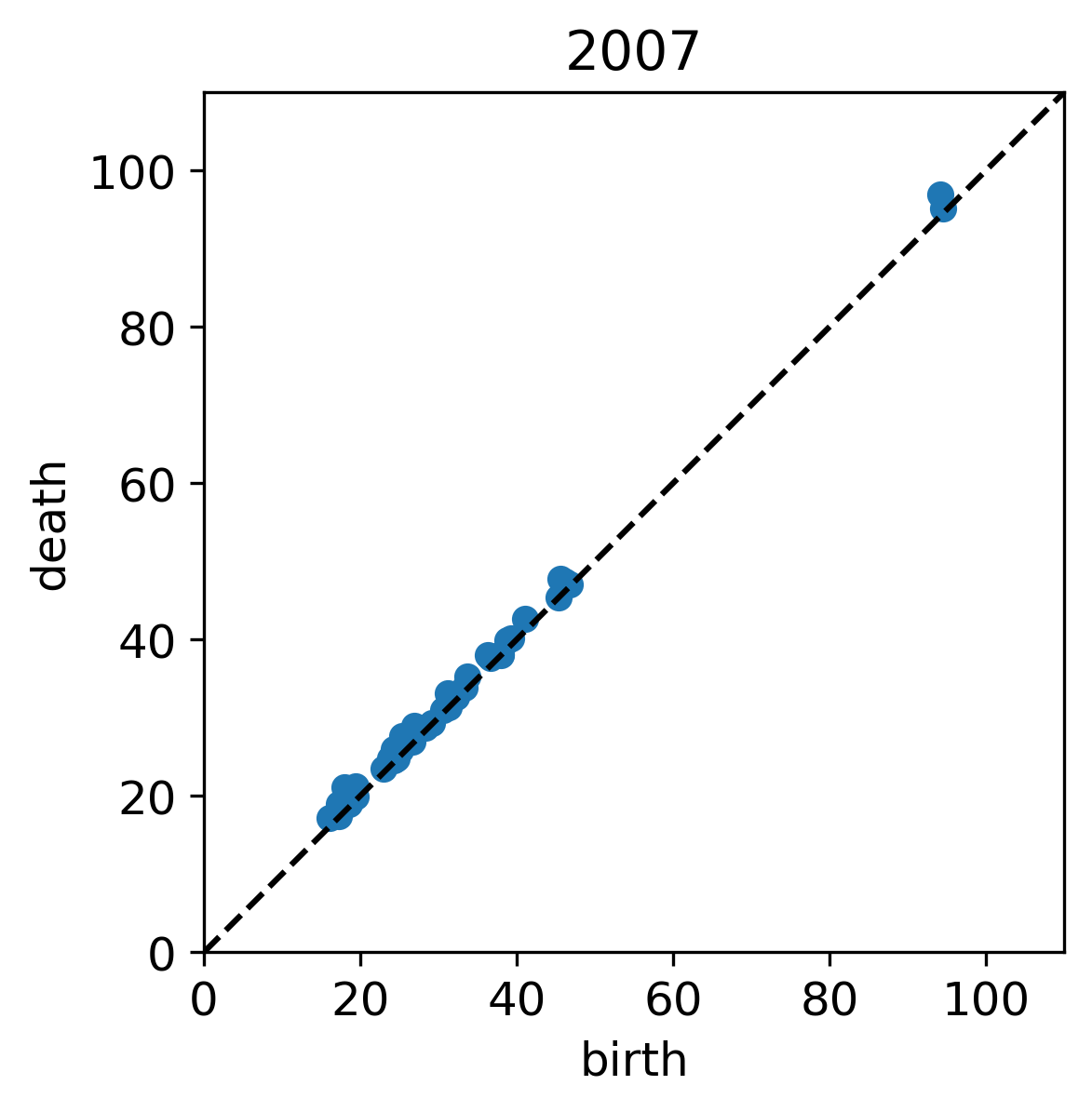}%
    \includegraphics[width=0.19\linewidth]{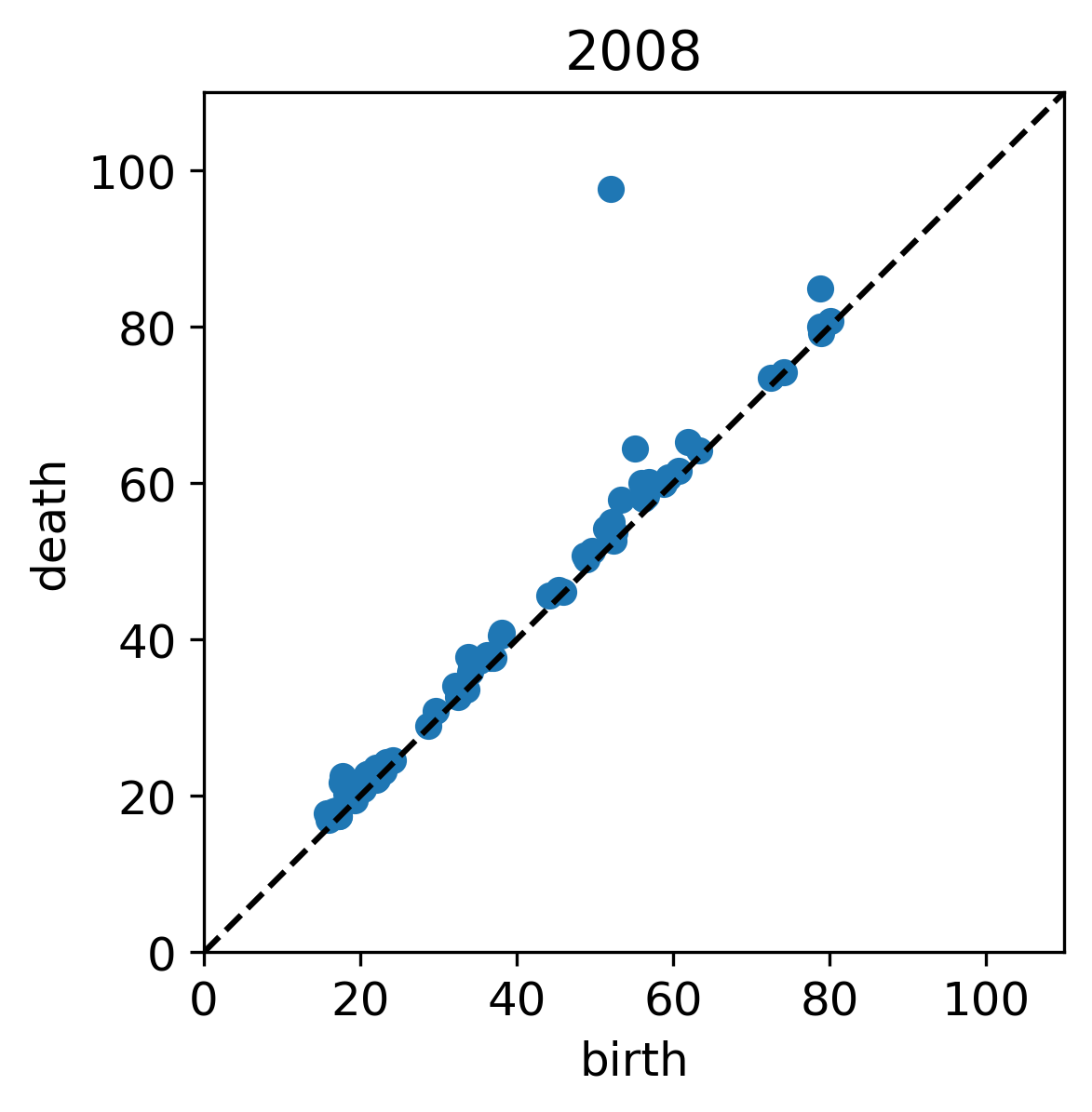}%
    \includegraphics[width=0.19\linewidth]{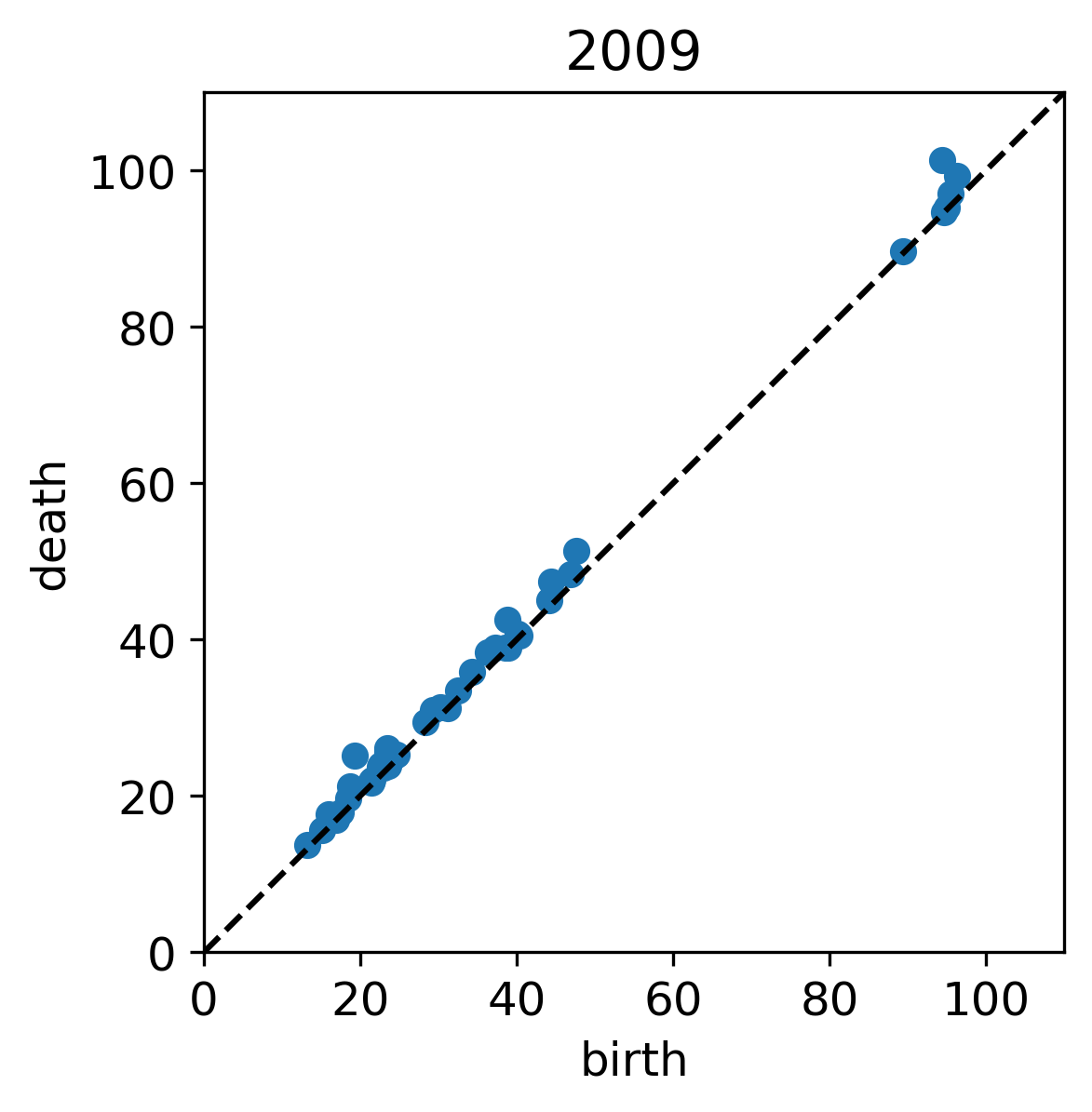}%
    
    \caption{Persistence diagrams of down-sampled images of Arctic ice extents converted into a point cloud for each year.}
\end{figure}

\foreach \i in {1999,2000,2001,2002,2003,2004,2005,2006,2007,2008}{

\begin{figure}[h]
        \centering
    \begin{tikzpicture}[scale=1.2]
    \node at (-3,2) {\small mask};
    \node at (0,2) {\small projection};
    \node at (4.5, 2) {\small principal components};
    \node[rotate=90] at (-5, 0) {\small raw data};
        \node at (0,0) {\includegraphics[height=3.4cm]{arctic/ice_eggs_\i _PCA_uniform.png}};
        \node at (3,0) {\includegraphics[height=3.2cm]{arctic/ice_eggs_\i _PC1_uniform.png}};
        \node at (6,0) {\includegraphics[height=3.2cm]{arctic/ice_eggs_\i _PC2_uniform.png}};
    \node[rotate=90] at (-5, -3) {\small masked data};
        \node at (-3,-3.05) {\includegraphics[height=3cm]{arctic/ice_eggs_\i _weights.png}};
        \node at (0,-3) {\includegraphics[height=3.4cm]{arctic/ice_eggs_\i _PCA_learned.png}};
        \node at (3,-3) {\includegraphics[height=3.2cm]{arctic/ice_eggs_\i _PC1_learned.png}};
        \node at (6,-3) {\includegraphics[height=3.2cm]{arctic/ice_eggs_\i _PC2_learned.png}};
    \end{tikzpicture}
    
    \caption{\i}
\end{figure}
}

\end{document}